\documentclass{article}

\usepackage{amssymb,amsthm,amsmath,amsfonts}
\usepackage{mathrsfs}
\usepackage{tikzsymbols}
\usepackage{tikz}
\usepackage{ytableau}
\usepackage[all]{nowidow}
\usepackage{textcase}
\usepackage{enumitem}
\usepackage[longnamesfirst]{natbib}
\usepackage{etoolbox} 

\usepackage[hidelinks]{hyperref}

\makeatletter
%

\def\bea#1\ena{\begin{align}#1\end{align}}
\def\beas#1\enas{\begin{align*}#1\end{align*}}
\def\beq#1\enq{\begin{equation}#1\end{equation}}
\newcommand{\Goodset}{\ifmmode\raisebox{-0.15ex}{\Smiley}\else\Smiley\fi}

\newcommand{\ignore}[1]{}

\newcommand{\qmq}[1]{\quad\text{#1}\quad}
\newcommand{\qm}[1]{\quad\mbox{#1}}

\newcommand{\Hyp}{\mathop{\mathrm{Hyp}}}

\newcommand{\esup}{\mathop{\mathrm{ess\,sup}}}
\newcommand{\ER}{Erd\H{o}s-R\'enyi}

\def\@noindentfalse{\global\let\if@noindent\iffalse}
\def\@noindenttrue {\global\let\if@noindent\iftrue}
\def\@aftertheorem{%
  \@noindenttrue
  \everypar{%
    \if@noindent%
      \@noindentfalse\clubpenalty\@M\setbox\z@\lastbox%
    \else%
      \clubpenalty \@clubpenalty\everypar{}%
    \fi}}

\theoremstyle{plain}
\newtheorem{theorem}{Theorem}[section]
\AfterEndEnvironment{theorem}{\@aftertheorem}

\AfterEndEnvironment{definition}{\@aftertheorem}
\newtheorem{lemma}[theorem]{Lemma}
\AfterEndEnvironment{lemma}{\@aftertheorem}

\AfterEndEnvironment{corollary}{\@aftertheorem}

\theoremstyle{definition}
\newtheorem{remark}[theorem]{Remark}
\AfterEndEnvironment{remark}{\@aftertheorem}

\AfterEndEnvironment{example}{\@aftertheorem}

\AfterEndEnvironment{proof}{\@aftertheorem}

\renewcommand{\cite}{\citet}

\numberwithin{equation}{section}

\renewcommand\section{\@startsection {section}{1}{\z@}%
	{-3.5ex \@plus -1ex \@minus -.2ex}%
	{1.3ex \@plus.2ex}%
	{\center\small\sc\mathversion{bold}\MakeTextUppercase}}

\def\subsection#1{\@startsection {subsection}{2}{0pt}%
	{-3.5ex \@plus -1ex \@minus -.2ex}%
	{1ex \@plus.2ex}%
	{\bf\mathversion{bold}}{#1}}

\def\subsubsection#1{\@startsection{subsubsection}{3}{0pt}%
	{\medskipamount}%
	{-10pt}%
	{\normalsize\itshape}{\kern-2.2ex. #1.}}

\def\note#1{\par\smallskip%
\noindent\kern-0.01\hsize%
{\setlength\fboxrule{0pt}\fbox{\setlength\fboxrule{0.5pt}\fbox{%
\llap{$\boldsymbol\Longrightarrow$ }%
\vtop{\hsize=0.98\hsize\parindent=0cm\small\rm #1}%
\rlap{$\enskip\,\boldsymbol\Longleftarrow$}
}}}%
}

\def\commandfactory#1#2#3{%
	\expandafter\def\csname #2#1\endcsname{{\csname #3\endcsname{#1}}}}

\let\original@left\left
\let\original@right\right
\renewcommand{\left}{\mathopen{}\mathclose\bgroup\original@left}
\renewcommand{\right}{\aftergroup\egroup\original@right}

\newcommand{\bigo}{\mathop{{}\mathrm{O}}\mathopen{}}
\newcommand{\lito}{\mathop{{}\mathrm{o}}\mathopen{}}

\newcounter{ctr}
\loop
\stepcounter{ctr}
\edef\X{\@Alph\c@ctr}%
\expandafter\commandfactory \X c {mathcal}
\expandafter\commandfactory \X I {mathbb}
\expandafter\commandfactory \X s {mathscr}
\expandafter\commandfactory \X b {boldsymbol}
\ifnum\thectr<26
\repeat

\newcommand{\Var}{\mathop{\mathrm{Var}}\nolimits}

\let\@IE\IE\let\IE\undefined
\newcommand{\IE}{\mathop{{}\@IE}\mathopen{}}

\def\be#1{\begin{equation*}#1\end{equation*}}
\def\ben#1{\begin{equation}#1\end{equation}}
\def\bes#1{\begin{equation*}\begin{split}#1\end{split}\end{equation*}}
\def\besn#1{\begin{equation}\begin{split}#1\end{split}\end{equation}}
\def\bg#1{\begin{gather*}#1\end{gather*}}
\def\bgn#1{\begin{gather}#1\end{gather}}
\def\bm#1{\begin{multline*}#1\end{multline*}}

\def\ba#1{\begin{align*}#1\end{align*}}
\def\ban#1{\begin{align}#1\end{align}}

\usepackage{delimset}
\def\given{\mskip 0.5mu plus 0.25mu\vert\mskip 0.5mu plus 0.15mu}
\newcounter{bracketlevel}%
\def\@bracketfactory#1#2#3#4#5#6{%
\expandafter\def\csname#1\endcsname##1{%
\global\advance\c@bracketlevel 1\relax%
\global\expandafter\let\csname @middummy\alph{bracketlevel}\endcsname\given%
\global\def\given{\mskip#5\csname#4\endcsname\vert\mskip#6}\csname#4l\endcsname#2##1\csname#4r\endcsname#3%
\global\expandafter\let\expandafter\given\csname @middummy\alph{bracketlevel}\endcsname%
\global\advance\c@bracketlevel -1\relax%
}%
}
\def\bracketfactory#1#2#3{%
\@bracketfactory{#1}{#2}{#3}{relax}{0.5mu plus 0.25mu}{0.5mu plus 0.15mu}
\@bracketfactory{b#1}{#2}{#3}{big}{1mu plus 0.25mu minus 0.25mu}{0.6mu plus 0.15mu minus 0.15mu}
\@bracketfactory{bb#1}{#2}{#3}{Big}{2.4mu plus 0.8mu minus 0.8mu}{1.8mu plus 0.6mu minus 0.6mu}
\@bracketfactory{bbb#1}{#2}{#3}{bigg}{3.2mu plus 1mu minus 1mu}{2.4mu plus 0.75mu minus 0.75mu}
\@bracketfactory{bbbb#1}{#2}{#3}{Bigg}{4mu plus 1mu minus 1mu}{3mu plus 0.75mu minus 0.75mu}
}
\bracketfactory{clc}{\lbrace}{\rbrace}
\bracketfactory{clr}{(}{)}
\bracketfactory{cls}{[}{]}
\bracketfactory{abs}{\lvert}{\rvert}
\bracketfactory{norm}{\Vert}{\Vert}
\bracketfactory{floor}{\lfloor}{\rfloor}
\bracketfactory{ceil}{\lceil}{\rceil}
\bracketfactory{angle}{\langle}{\rangle}

\def\clc#1{\{#1\}}
\def\abs#1{\vert#1\vert}

\def\law{\sL}
\newcommand{\eqlaw}{=_d}
\def\eps{\varepsilon}
\def\epsilon{\varepsilon}
\def\I{\mathop{{}\mathrm{I}}}
\def\~#1{\ifmmode {\mathaccent"707E #1} \else {\accent"7E #1} \fi}
\def\~#1{\widetilde{#1}}

\def\ERRG{\mathop{\mathrm{ER}}}
\newcommand{\Vee}{{\mathcal V}}
\makeatother
\def\phi{\varphi}
\def\Leq{\enskip\le\enskip}

\begin{document}

\title{\sc\bf\large\MakeUppercase{Stein's method via induction
}}
\date{\today}
\author{\sc Louis H.~Y.~Chen$^*$, Larry Goldstein$^\ddagger$\\[0.5ex] \sc and Adrian R\"ollin$^*$}
\date{\it National University of Singapore$^{*}$\\[0.1\baselineskip] and University of Southern California$^{\ddagger}$}
\maketitle

\begin{abstract}
\noindent Applying an inductive technique for Stein and zero bias couplings yields Berry-Esseen theorems for
normal approximation for two new examples. The conditions of the main results do not require that the couplings be bounded. Our two applications, one to the \ER\,  random graph with a fixed number of edges, and one to Jack measure on tableaux, 
demonstrate that the method can handle non-bounded variables with non-trivial global dependence, and can produce bounds in the Kolmogorov metric with the optimal rate.
\end{abstract}

\makeatletter
\def\blfootnote{\xdef\@thefnmark{}\@footnotetext}
\makeatother
\blfootnote{AMS 2000 subject classifications: 
	Primary 60F05\ignore{Central limit and other weak theorems}; secondary 05C07\ignore{Vertex degrees}, 05C80\ignore{Random graphs (graph-theoretic aspects)}, 05E10\ignore{Combinatorial aspects of representation theory}}
\blfootnote{Keywords: Kolmogorov distance, optimal rates, Erd\H{o}s-R\'enyi random graph, Jack measure}

\section{Introduction}

We present new Berry-Esseen theorems for sums~$Y$ of possibly dependent variables by combining both the Stein and zero bias couplings of Stein's method with the inductive technique of Bolthausen \citeyearpar{Bolthausen84} originally developed for the combinatorial central limit theorem. We apply these results to obtain normal approximations in the Kolmogorov metric for two new examples.

Stein's method (\cite{Stein72}, \cite{Stein86}) typically proceeds by coupling a random variable~$Y$ of interest to a related variable~$Y'$; for an overview see \cite{Chen10} and \cite{Ross11}. Here we develop results that can be applied to the Stein couplings of \cite{ChRo10} and to the zero bias couplings of \cite{GoRe97}, thus encompassing most of the known couplings that have appeared in the literature, including settings not typically framed in terms of couplings, such as local dependence. The innovation here is the widened scope of the couplings that can be handled that permit applications when the difference~$|Y-Y'|$ between~$Y$ and the coupled~$Y'$ is not almost surely bounded by a constant, or where the bound on this difference increases in the problem size. This work is a broad extension and continuation of \cite{Gh09}, applying induction and the zero bias coupling for the combinatorial central limit theorem where the random permutations are involutions, and of \cite{Go13} using the size bias coupling to study degree counts in the Erd\H{o}s-R\'enyi random graph; the inductive method considered here is inspired by \cite{Bolthausen84}, but goes ultimately back to \cite{Bergstrom1944}.

At the center of Stein's method is the characterization that~$Z$ is a standard normal random variable if and only if
\be{
  \IE\clc{Zf(Z)}=\IE\clc{f'(Z)}
}
for all locally absolutely continuous functions~$f$ for which the above expectations exist. Given a standardized variable~$W$ whose distribution is to be compared to~$Z$, and a test function~$h$
on which to evaluate the difference~$\IE h(W)-\IE h(Z)$, one solves the Stein
equation
\bea \label{1}
  f'(w)-wf(w)=h(w)-\IE h(Z)
\ena
for~$f$. The difference~$\IE h(W)-\IE h(Z)$ may then be evaluated by substituting~$W$ for~$w$ and taking expectation on
the left hand side of \eqref{1}, rather than the right. One explanation of why the expectation of the left hand side may simpler to
compute, or bound, than that of the right is that it depends only on the distribution of~$W$, whereas the right
also depends on that of~$Z$. In particular, on the left hand side one may apply couplings of~$W$ to auxiliary random
variables having properties that allow for convenient manipulations.

In Theorem \ref{thm1} we present results for situations in which one can form a Stein coupling as defined by 
\cite{ChRo10}.  Following the treatment there, we say that the triple~$(W,W',G)$ of random variables is a Stein coupling when
\bea \label{2}
\IE\clc{Gf(W')-Gf(W)}=\IE\clc{Wf(W)}
\ena
for all functions~$f$ for which the expectations above exist. It is not difficult to see that the canonical
exchangeable pair coupling of \cite{Stein86}, and the size bias coupling of \cite{Goldstein96} are both special
cases of Stein couplings. Indeed, recall that for~$\lambda \in (0,1]$ we say~$(W,W')$ is a~\emph{$\lambda$-Stein pair} if~$(W,W')$ is exchangeable and
\bea \label{3}
\IE\{W'\given W\} = (1-\lambda) W.
\ena
In this case, it is easily verified that \eqref{2} is satisfied with
\beas
G=\frac{1}{2\lambda}(W'-W).
\enas
Likewise, for a non-negative random variable~$Y$ with finite mean~$\mu$, we say that~$(Y,Y')$ is a size bias coupling of~$Y$
when~$Y'$ has the~$Y$-size bias distribution, that is, when
\beas
\IE\clc{Yf(Y)}=\mu \IE\clc{f(Y')}
\enas
for all functions~$f$ for which these expectations exist. Again, it is easy to verify that for such couplings \eqref{2} is satisfied with
\beas
W=Y-\mu, \quad W'=Y'-\mu \qmq{and} G= \mu.
\enas
In particular, Theorem \ref{thm1} extend results in \citet{Go13} for the size bias coupling.

Theorem \ref{thm2} provides a parallel result for the zero bias coupling~$(W,W^*)$ of \cite{GoRe97}. Recall that for a non-trivial mean zero, variance~$\sigma^2$ random variable~$W$, we say that~$W^*$ has the~$W$-zero biased distribution if
\bea \label{4}
\IE\{Wf(W)\}=\sigma^2 \IE \{f'(W^*)\}
\ena
for all functions~$f$ for which the quantities above exist.

In Stein's method in general, simplification occurs when one can achieve couplings of~$W$ to an appropriate~$W'$ such that the difference is almost surely bounded, or bounded uniformly in the size of the problem. However, in many situations appropriately bounded couplings may be difficult to construct, whereas unbounded couplings seem to appear naturally. Hence Theorems \ref{thm1} and \ref{thm2}, which do not impose restrictive boundedness conditions, may be applied to produce new results in a variety of examples. 

\begin{description}[leftmargin=0cm]
\item[General Framework.] Let~$(\Theta,\cT)$ and~$(\Omega,\cF)$  be two measurable spaces, the \emph{parameter space} and the \emph{sample space}, respectively. All  random variables are understood to be real valued measurable functions from the product space~$(\Theta\times\Omega,\cT\otimes\cF)$. The distribution of a random variable~$X$ is determined by a parameter~$\theta \in \Theta$ through a given transition kernel~$\IP_\theta$ from~$\Theta$ to~$\Omega$. That is, for each~$\theta\in\Theta$,~$\IP_\theta[\cdot]$ is a probability measure on~$(\Omega,\cF)$, and for each~$A\in\cF$, the map~$\IP_\cdot[A]$ is~$\cT$-measurable. Depending on context and emphasis, we may also write~$X$ as~$X(\theta,\omega)$ or~$X_\theta(\omega)$, so that, for instance,~$\IE_\theta X =\int_\Omega X(\theta,\omega)\IP_\theta[d\omega]$.
	
\end{description}

\noindent These measurability conditions are needed to assure the measurability of mappings that appear later, such as of the mean~$\mu_\theta$,  the variance~$\sigma_\theta^2$ of~$Y$, and of~$Y_{\Psi(\theta,\omega)}(\omega)$, which represents the value of~$Y$ at the parameter used in the inductive step. These conditions
will not always be invoked explicitly below;  we illustrate their use by showing in the Appendix, Section \ref{sec5}, that this latter variable in particular is measurable. 

Our goal is to obtain bounds on the Kolmogorov distance between the standardized version~$W$ of a random variable~$Y$ and the normal distribution in 
terms of the parameter~$\theta$. Theorems \ref{thm1} and \ref{thm2} below yield a bound of the form~$C/r_\theta$ for~$r_\theta$ a positive `rate' function of~$\theta$ and~$C$ a constant not depending on~$\theta$. 

As noted, one main step our method requires is to couple~$W$ to a random variable~$W'$, which satisfies either the Stein coupling relation \eqref{2} or the zero bias coupling relation \eqref{4}. In order to apply induction, we identify a subset~$\Goodset \subset \Theta$ in Condition \eqref{6}, consisting of the `nicely behaved' parameters; its complement plays the role of the base case, on which the bound~$C/r_\theta$ may be trivial. For our bound to be informative, it is necessary that the rate function~$r_\theta$ be unbounded on~\Goodset.

For the induction step, we also introduce a sub~$\sigma$-algebra~$\cF_\theta$ that, roughly speaking, captures the information about the changes that were necessary to construct $W'$ from $W$ (or equivalently, $Y'$ from $Y$); the coarser $\cF_\theta$ is, the better the normal approximation will be. A certain tension is created here, as $\cF_\theta$ must be large enough to contain the variables describing the changes from $Y$ to $Y'$, but small enough so that the conditional distribution of~$Y$ on~$\cF_\theta$, is sufficiently close to its original one.

Conditional on  $\cF_\theta$, the variable $Y$ may no longer have its original distribution, but induction is viable when one can identify within $Y$ another variable $V$ that has a distribution similar to the original $Y$; when the parameter space $\Theta$ is ordered, $V$ typically has a smaller parameter. For a successful induction, the parameter of the smaller problem should not stray too far from that of $Y$. There is some leeway here, as it suffices to have control over an event $F_{\theta,1}$, as specified in Condition \eqref{11}. Intuitively, the event $F_{\theta,1}$ should contain the bulk of the support of the variables that generate~$\cF_\theta$, and not their extremes. For instance, for the \ER\ graph problem considered, $\cF_\theta$ contains the label and degree of a chosen vertex on which the coupling is based, and  $F_{\theta,1}$ is an even on which its degree is `not too large'.

Relaxing the condition that the difference~$D=W'-W$ be bounded, we control the magnitude of this difference by its moments. Moreover, we upper bound~$D$ by~$\overline{D}$, and in the case of a Stein coupling, also~$G$ by~$\overline{G}$, where these majorizing variables are required to be~${\cal F}_\theta$ measurable; we are able to handle exceptional or boundary cases as these upper bounds are only required to hold on~$F_{\theta,1}$. We will also require the existence of a random variable~$B$ that bounds the absolute
difference~$|Y-V|$, and which is not `too large.'  See Conditions \eqref{9}, \eqref{11} and 
\eqref{16} for the case of Stein couplings.

There is also some leeway in that the distribution of~$V$, conditionally on~$\cF_\theta$, only needs to be close to that of~$Y$ on an event~$F_{\theta,2} \in \cF_\theta$. 
Precisely, for the Stein coupling case, with similar remarks also applying to zero bias couplings, we impose  in Condition \eqref{13} that
\bea \label{5}
\law _\theta(V\given\cF _\theta)=\law _{\Psi_\theta}(Y) \qmq{on~$F_{\theta,2}$,}
\ena
where $\Psi_\theta$ is the (typically random) parameter capturing the conditional distribution of the embedded variable $V$.
For clarification, by \eqref{5} we mean
\be{ 
	\IP_{\theta}[V\in \cdot\, \given\cF_\theta](\omega) = \IP_{\Psi_\theta(\omega)}[Y \in \cdot\,] \qquad\text{for all~$\omega \in F_{\theta,2}$}.
}
With the help of~$V$, a recursive inequality for a bound on the
distance between~$W$ and the normal can be produced.

Before attempting to apply the methods presented in this article, it is advisable that a user first `test the waters' by constructing a Stein or zero-bias coupling and proving a normal approximation for a smooth metric such as the Wasserstein distance; see \cite{ChRo10}, or \cite{Go17}, respectively. Once this goal has been achieved, the sigma-algebra $\cF_\theta$ will typically arise naturally from the coupling construction, and one may then proceed to identify a suitable variable $V$ whose conditional distribution given $\cF_\theta$ is within the same class of distributions determined by $\Theta$ and close to that of $Y$. For instance, in occupancy problems, a Stein coupling or zero-bias coupling typically involves moving around a small number of balls among a small number of urns, and $V$ will typically again represent an occupancy problem, but on fewer balls and fewer urns.

\subsection{Abstract approximation theorems}\label{subsec1}

We now state the conditions required for our main results.
The inverse rate function~$r_\theta$ is assumed to be a positive function, measurable in~$\theta$, a condition satisfied for all natural examples, including the ones considered
here. The mean~$\mu_\theta=\IE_\theta Y$ and variance~$\sigma_\theta^2=\Var_\theta (Y)$ are measurable by the conditions in our General Framework. To avoid repetition, the distribution of random variables indicated after~$\theta \in \Theta$ has been
fixed is with respect to~$\law _\theta(\cdot)$. The random variable~$Z$ will always denote the standard normal.

The variable~$Y$ denotes the unstandardized random variable of interest. Theorem \ref{thm1} shows that the following set of conditions are sufficient for the Kolmogorov distance between the standardized version~$W$ of~$Y$ and the normal to be bounded by~$C/r_\theta$ for some universal constant~$C$.

\begin{enumerate}
[label=\bfseries(G\arabic*),ref=G\arabic*]
\item\label{6} Let~$r_\theta$ be a positive measurable function, let~$\overline{r}$ be a positive number, and let
\ben{\label{7}
	\Goodset= \{\theta\in\Theta\,:\,r_\theta>\overline{r}\}.
}
Assume that~$\overline{r}$ is chosen such that~$\Var_\theta Y > 0$ for all~$\theta\in\Goodset$.
	
\item\label{8} For all~$\theta \in \Theta$, let~$\mu_\theta = \IE_\theta Y$ and~$\sigma^2_\theta = \Var_
\theta Y$, and define
\be{
	W = \frac{Y-\mu_\theta}{\sigma_\theta}
}
whenever~$\sigma_\theta>0$, and set~$W=0$ otherwise. Let~$W'$ and~$G$ be two random variables such that, for each~$\theta\in \Goodset$,
$(W,W',G)$ is a Stein coupling, in the sense of \eqref{2}, with respect to~$\IP_\theta$.

\item \label{9}  With~$D=W'-W$ assume that
\ben{\label{10}
	\sup_{\theta\in\Goodset} r_\theta \,\IE_\theta\babs{\IE_\theta\clr{1-GD\given W}}< \infty
	\qmq{and}
	\sup_{\theta\in\Goodset} r_\theta \,\IE_\theta\bclc{(1+\abs{W})\abs{G}D^2}<\infty.
}
\item\label{11} For each~$\theta\in \Goodset$, let~$\cF_\theta\subset \cF$ be a sub-$\sigma$-algebra. Let~$\overline G$ and~$\overline D$ be random variables such that, for each~$\theta\in\Goodset$, the mappings~$\overline{G}(\theta,\cdot)$ and~$\overline{D} (\theta,\cdot)$ are~$\cF_\theta$-measurable and such that, on some event~$F_{\theta,1}$ which need not be in~${\cal F}_\theta$, we have~$\abs{G}\leq \overline{G}$,~$\abs{D}\leq \overline{D}$, and
\ben{\label{12}
	\sup_{\theta\in\Goodset} r^2_\theta\,\IE_\theta\bclc{\abs{G}D^2(1-I_{F_{\theta,1}})} < \infty \qmq{and}   \sup_{\theta\in\Goodset} r_\theta \,\IE_\theta\bclc{\overline{G}\,\overline{D}^2 }<\infty.
}

\item\label{13} Let~$\Psi$ be a~$\Theta$-valued random element such that, for each~$\theta\in \Goodset$,~$\Psi(\theta,\cdot)$ is~$\cF_\theta$-measurable.  Let~$V$ be a random variable, and for each~$\theta\in\Goodset$, let~$F_{\theta,2}\in{\cal F}_\theta$ be such that
\ben{\label{14}
	\law_\theta(V\given\cF_\theta) = \law_{\Psi}(Y) \qquad\text{on~$F_{\theta,2}$,}
}
and
\ben{\label{15}
	\sup_{\theta\in \Goodset} r^2_\theta\,\IE_\theta\bclc{\abs{G}D^2(1-I_{F_{\theta,2}})} < \infty.
}
\item\label{16} Let~$\overline B$ be a random variable such that, for each~$\theta\in\Goodset$,~$\overline B(\theta,\cdot)$ is~$\cF_
\theta$-measurable,
\ben{\label{17}
	\sigma_\theta^{-1}\abs{Y-V} \leq \overline B \quad
	\text{on~$F_{\theta,1}$},\quad\text{and} \quad
	\sup_{\theta\in\Goodset} r^2_\theta \IE_\theta\bclc{\overline G\,\overline D^2\overline B  I_{F_{\theta,2}}} < \infty.
}
\item\label{18} Assume
\bgn{\label{19}
	\sup_{\theta\in\Goodset}\,\,\esup_{\omega\in F_{\theta,2}\cap \{\Psi\in\Goodset\}}\frac{\sigma^2_\theta}{\sigma^2_{\Psi(\theta,\omega)}} < \infty,\\
	\label{20}
	\sup_{\theta\in\Goodset}\,\,\esup_{\omega\in F_{\theta,2}}\frac{r_\theta}{r_{\Psi(\theta,\omega)}} < \infty,
	\qquad  
	\sup_{\theta\in\Goodset}\,\,\esup_{\omega\in F_{\theta,2}\cap \{\Psi\in\Goodset\}}\frac{r_{\Psi(\theta,\omega)}}{r_\theta} < \infty,
}
where the essential suprema are taken with respect to $\IP_\theta$.
\end{enumerate}

\begin{theorem} \label{thm1}
If Conditions \eqref{6}--\,\eqref{18} are satisfied, then there exists a constant $C$, independent of $\theta$, such that
\ben{\label{21}
	\sup_{z\in \IR}\babs{\IP_\theta[W\leq z] -
			\IP[Z\leq z]} \leq \frac{C}{r_\theta}\qquad \text{for all~$\theta\in\Theta$.}
}
\end{theorem}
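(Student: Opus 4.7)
The plan is to prove \eqref{21} by induction on $r_\theta$. Define
$K(\theta) = \sup_{z \in \IR} \babs{\IP_\theta[W \le z] - \IP[Z \le z]}$. For $\theta \notin \Goodset$, the bound $K(\theta) \le 1 \le \overline r /r_\theta$ is trivial, so it suffices to establish a recursive inequality on $\Goodset$. The goal is to derive a bound of the form $K(\theta) \le A/r_\theta + (\rho/r_\theta) \esup_{\omega} r_{\Psi(\theta,\omega)} K(\Psi(\theta,\omega))$ with $\rho < 1$, and close the induction in Bolthausen's style by taking the supremum of $r_\theta K(\theta)$.

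For $\theta \in \Goodset$ and fixed $z$, let $f = f_z$ be the bounded Stein solution for $h_z(w) = I_{w \le z}$. The Stein identity \eqref{1}, the coupling identity \eqref{2}, and a first-order Taylor expansion $f(W') - f(W) = D f'(W) + R$ give
\be{
\IP_\theta[W \le z] - \IP[Z \le z] = \IE_\theta\bcls{f'(W)(1 - GD)} - \IE_\theta\bcls{GR}.
}
By the tower property and $\norm{f_z'}_\infty \le 1$, the first term is bounded in absolute value by $\IE_\theta\babs{\IE_\theta[1 - GD \given W]}$, which is $O(1/r_\theta)$ by the first half of \eqref{10}.

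The remainder $\IE_\theta[GR]$ is the main obstacle, since the jump of $f_z'$ at $z$ precludes a second-derivative bound. Standard manipulations split $R$ into a smooth part dominated by a multiple of $(1+\abs{W})D^2$, whose expectation against $G$ is $O(1/r_\theta)$ by the second half of \eqref{10}, and a concentration-type part, roughly of the form $\IE_\theta\bcls{\abs{G}\abs{D}\,I\{z - \abs{D} \le W \le z+\abs{D}\}}$, arising from the indicator's jump. For the concentration part, restrict to $F_{\theta,1}$ (majorising $\abs{G}, \abs{D}$ by the $\cF_\theta$-measurable $\overline G, \overline D$) and absorb the complement via \eqref{12}. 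Then estimate the indicator by passing from $W$ to the standardised version of $V$, with error controlled by $\overline B$ through \eqref{17}; conditioning on $\cF_\theta$ and invoking \eqref{14} replaces the conditional law of $V$ by $\law_{\Psi(\theta,\omega)}(Y)$, with residual contribution from $F_{\theta,2}^c$ absorbed by \eqref{15}.

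At this point the inductive hypothesis applied to the parameter $\Psi(\theta,\omega)$ provides a bound of order $\overline B + 1/r_{\Psi}$ on the inner probability. Combined with the moment bound \eqref{17} on $\overline G \overline D^2 \overline B$ and the comparability ratios in \eqref{19}--\eqref{20}, this yields the desired recursion $r_\theta K(\theta) \le A + \rho\,\sup_{\theta' \in \Goodset} r_{\theta'} K(\theta')$. The delicate point is that $\rho$ is expressible in terms of the suprema appearing in \eqref{10}, \eqref{12}, \eqref{15}, \eqref{17}, and \eqref{19}--\eqref{20}, all of which are finite; since $\Goodset$ only contains parameters with $r_\theta > \overline r$, choosing $\overline r$ large (which only enlarges the trivial base case) forces $\rho < 1$. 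Taking the supremum then gives $\sup_{\theta \in \Goodset} r_\theta K(\theta) \le A/(1 - \rho) < \infty$, yielding \eqref{21} with $C = \max\{\overline r, A/(1-\rho)\}$.
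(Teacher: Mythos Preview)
Your overall structure mirrors the paper's, but there are two genuine gaps.

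First, and most importantly, you work directly with the indicator $h_z$ rather than a smoothed version. Without smoothing, the concentration term arising from the jump of $f_z'$ is of the form $\IE_\theta\bigl[\overline G\,\overline D\,\IP_\theta(\~W \in [\cdot,\cdot]\mid\cF_\theta)\bigr]$; after bounding the inner probability by $2\delta(\Psi)+C(\overline D+\overline B)$ and multiplying through by $r_\theta$, the coefficient on the recursive term $r_\Psi\delta(\Psi)$ is essentially a constant times $\IE_\theta[\overline G\,\overline D]$. Since $\IE_\theta[GD]=1$ and $\overline G\ge|G|$, $\overline D\ge|D|$ on $F_{\theta,1}$, this expectation is bounded below by (approximately) $1$ for every $\theta\in\Goodset$, and enlarging $\overline r$ does nothing to shrink it; your claim that one can force $\rho<1$ this way therefore fails. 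The paper's remedy is to smooth $h_z$ to $h_{z,\eps}$: the derivative $f_{z,\eps}'$ is then Lipschitz, which produces the extra factor $|t|$ in \eqref{140} and, after integration, upgrades $\overline G\,\overline D$ to $\overline G\,\overline D^2$ in the recursive coefficient. Since $r_\theta\,\IE_\theta[\overline G\,\overline D^2]$ is bounded by \eqref{12}, the recursive coefficient becomes $C/(\eps r_\theta)$; choosing $\eps=C/(q r_\theta)$ then makes it exactly $q$ for any prescribed $q\in(0,1)$, at the cost of an additive $\eps/\sqrt{2\pi}=O(1/r_\theta)$ from the smoothing inequality \eqref{139}. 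There is no free parameter playing this role in your outline.

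Second, even granting a valid recursion $a(\theta)\le q\,\IE_\theta[X\,a(\Psi)]+c$ with $q<1$ and $\IE_\theta X=1$, the step ``take the supremum over $\theta\in\Goodset$'' is unjustified: $\sup_\theta a(\theta)$ is not known to be finite a priori, and the recursion alone does not preclude exponentially growing solutions. In Bolthausen's original setting one has genuine downward induction on $n$, but here $\Psi$ is random and $r_\Psi$ need not be smaller than $r_\theta$. The paper closes this gap with Lemma~\ref{lem10}, which uses the second half of \eqref{20} together with the trivial bound $a(\theta)\le r_\theta$ to rule out unbounded solutions by a contradiction argument.
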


Theorem \ref{thm1} extends Theorem 1.1 in \citet{Go13}, which produces a Kolmogorov bound
equivalent up to constants to the bound in \citet{ChRo10} for the Wasserstein distance to the normal for bounded
size bias couplings. In addition, the bound produced by \citet{BaGo13} by an application of Theorem 1.1 of
\citet{Go13} to counts in a multinomial occupancy model was shown there to be of optimal order by the lower bound
(1.6) of \citet{Englund81}, see also (1.7) of \citet{BaGo13}; the bound of Theorem 1.2 of \citet{Go13}, using also
Theorem 1.1 of that same work, for degree counts in the Erd\H{o}s-R\'enyi random graph can also be shown to be
optimal up to constant factors in the same manner.

When higher moments exist a number of the conditions of the theorem may be verified using simpler expressions, obtained via
standard inequalities. For instance, using~$f(w)=w$ and that~$\Var_\theta(W)=1$ in
\eqref{2} shows that~$\IE_\theta (G D)=1$, hence applying the Cauchy-Schwarz inequality to the first expression in \eqref{10} in Condition \eqref{9} above, followed by a consequence of the conditional variance formula, we obtain 
\bea \label{22}
\IE_\theta \left|\IE_\theta\clr{1-G D\given W} \right|  \le \sqrt{\Var_
	\theta \left(\IE_\theta\clr{G D\given W}\right)} \le \sqrt{\Var_
	\theta \left(\IE_\theta\clr{G D\given {\mathcal H}}\right)},
\ena
where~${\mathcal H}$ is any~$\sigma$-algebra with respect to which~$W$ is measurable. 

We now state a parallel result for zero bias couplings.

\begin{enumerate}[label=\bfseries(Z\arabic*),ref=Z\arabic*]
\item\label{23} Let~$r_\theta$ be a positive measurable function, let~$\overline{r}$ a positive number, and let
\be{
	\Goodset = \{\theta\in\Theta\,:\,r_\theta>\overline{r}\}.
}
Assume that~$\overline{r}$ is chosen such that~$\Var_\theta Y > 0$ for all~$\theta\in\Goodset$.

\item \label{24} Let~$\mu_\theta = \IE_\theta Y$ and~$\sigma^2_\theta = \Var_
\theta Y$, and define
\be{
	W = \frac{Y-\mu_\theta}{\sigma_\theta}
}
whenever~$\sigma_\theta>0$ and~$W=0$ otherwise. Let~$W^*$ be defined on~$\Omega$, such that for each~$\theta\in\Goodset$ the variable~$W^*$ has the~$W$-zero bias distribution as in \eqref{4} with respect to~$\IP_\theta$.

\item\label{25} For each $\theta \in \Goodset$ let $\cF _\theta$ be a sub-sigma algebra of $\cF$, let $D=W^*-W$, and let ${\overline D}$ be a random variable such that ${\overline D}(\theta,\cdot)$ is $\cF _\theta$-measurable, and let $F_{\theta,1}$ be an event, which need not be in ${\cal F}_\theta$, on which $|D| \le {\overline D}$ and such that
\ben{\label{26}
	\sup_{\theta\in\Goodset} r^2_\theta\,\IE_\theta\bclc{|D|(1-I_{F_{\theta,1}})} < \infty \qmq{and} \sup_{\theta \in \Goodset} r_\theta E_\theta \left\{|D W|+{\overline D}\right\} <\infty.
}

\item\label{27} Let $V$ be a random variable, and let $\Psi$ be a $\Theta$-valued random element such that, for each $\theta\in\Goodset$, $\Psi(\theta,\cdot)$ is $\cF_\theta$-measurable. For each $\theta\in\Goodset$, let~$F_{\theta,2}$ be an event in ${\cal F}_\theta$ such that
\ben{\label{28}
	\law_\theta(V\given\cF_\theta) = \law_{\Psi}(Y) \qquad\text{on~$F_{\theta,2}$,}
}
and
\ben{\label{29}
	\sup_{\theta\in\Goodset} r^2_\theta\,\IE_\theta\bclc{|D|(1-I_{F_{\theta,2}})} < \infty.
}
\item\label{30} Let $\overline B$ be a random variable such that, for each $\theta\in\Goodset$, $\overline B(\theta,\cdot)$ is $\cF_
\theta$-measurable, and
\ben{\label{31}
	\sigma_\theta^{-1}\abs{Y-V} \leq \overline B  \qmq{on $F_{\theta,1}$,\quad and}   \sup_{\theta\in\Goodset} r^2_\theta \IE_\theta\bclc{\overline D \left( \overline B  + \overline D \right) I_{F_{\theta,2}}} < \infty.
}
\end{enumerate}

\begin{theorem} \label{thm2}
If Conditions \eqref{23}--\,\eqref{30} and \eqref{18} are satisfied, then there exists a constant $C$, independent of $\theta$, such that
\be{ 
	\sup_{z\in \IR}\babs{\IP_\theta[W\leq z] -
			\IP[Z\leq z]} \leq \frac{C}{r_\theta},\qquad \text{for all~$\theta\in\Theta$.}
}
\end{theorem}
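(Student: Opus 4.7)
The plan is to adapt the Bolthausen-style inductive argument underlying Theorem~\ref{thm1} to the zero-bias setting, replacing the Stein coupling identity \eqref{2} by the zero bias identity \eqref{4}. Define
\be{
K_\theta:=\sup_{z\in\IR}\babs{\IP_\theta[W\le z]-\IP[Z\le z]}
}
and $\Gamma:=\sup_{\theta\in\Theta}r_\theta K_\theta$; I aim to close a bootstrap $\Gamma\le A+\rho\,\Gamma$ with $A<\infty$ and $\rho\in[0,1)$, yielding $\Gamma\le A/(1-\rho)<\infty$. For $\theta\notin\Goodset$ the trivial bound $K_\theta\le 1\le\bar r/r_\theta$ suffices, so fix $\theta\in\Goodset$. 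Let $f_z$ solve the Stein equation \eqref{1} for the indicator test function $h(w)=\I_{w\le z}$; standard facts give $\sup_w\abs{f_z(w)}$ and $\sup_w\abs{f'_z(w)}$ bounded by universal constants independent of $z$. Substituting $W$ into \eqref{1} and invoking \eqref{4} with $\Var_\theta W=1$ gives
\ba{
\IP_\theta[W\le z]-\Phi(z)=\IE_\theta\bclc{f'_z(W)-f'_z(W^*)},
}
and rewriting $f'_z(w)=wf_z(w)+\I_{w\le z}-\Phi(z)$ decomposes the right-hand side into a smooth part $\IE_\theta\bclc{Wf_z(W)-W^*f_z(W^*)}$ and an indicator-jump part $\IE_\theta\bclc{\I_{W\le z}-\I_{W^*\le z}}$. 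The smooth part obeys a Lipschitz-type bound of order $\IE_\theta\{(1+\abs{W}+\abs{W^*})\abs{D}\}$, which is $O(1/r_\theta)$ via \eqref{26} and \eqref{31}; only the jump part requires the inductive hypothesis.

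To control the jump I partition $\Omega=F_{\theta,1}^c\cup(F_{\theta,1}\cap F_{\theta,2}^c)\cup(F_{\theta,1}\cap F_{\theta,2})$. The tail moment bounds \eqref{26} and \eqref{29} handle the first two pieces, each contributing $O(1/r_\theta^2)$. On the main event I condition on $\cF_\theta$: by \eqref{28}, $\law_\theta(V\given\cF_\theta)=\law_\Psi(Y)$, so the inductive hypothesis at $\Psi$ gives the standardized version of $V$ within Kolmogorov distance $\Gamma/r_\Psi$ of the normal, with $\{\Psi\notin\Goodset\}$ absorbed via $r_\Psi\le\bar r$. Combining this with the bridging $\sigma_\theta^{-1}\abs{Y-V}\le\bar B$ from \eqref{31}, the variance-ratio bound \eqref{19}, and the rate-ratio bounds \eqref{20}, I extract a conditional concentration inequality of the form
\ba{
\IP_\theta\bclc{\abs{W-z}\le\bar D\given\cF_\theta}\le C\bclr{\bar D+\bar B}+C\,\Gamma/r_\theta
}
on $F_{\theta,1}\cap F_{\theta,2}$. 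Taking unconditional expectation and invoking \eqref{31} for $\IE_\theta\{\bar D(\bar B+\bar D)\I_{F_{\theta,2}}\}=O(1/r_\theta^2)$ and \eqref{26} for $\IE_\theta\bar D=O(1/r_\theta)$, the jump contribution is of order $(1+\Gamma)/r_\theta^2$. Assembling all pieces gives $K_\theta\le A/r_\theta+\rho\,\Gamma/r_\theta$; taking the supremum over $\theta$ closes the bootstrap.

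The main obstacle is extracting the conditional concentration inequality sharply enough that the resulting $\rho$ is strictly less than~$1$. This requires careful bookkeeping of three ingredients: the variance and rate ratios of \eqref{18}, the interplay between the bridging error $\bar B$ and the coupling increment $\bar D$ (so that both can be absorbed into the $r_\theta^2$-weighted moment conditions \eqref{26}, \eqref{29}, \eqref{31}), and the exceptional set $\{\Psi\notin\Goodset\}$ on which only the trivial bound is available. A secondary subtlety, absent from the Wasserstein analysis of \cite{GoRe97}, is that $f'_z$ has a jump discontinuity at $z$, so Taylor expansion cannot be applied directly to the indicator part; instead the jump must be converted into small-interval concentration for $W$, which is exactly what the induction supplies and what makes the bootstrap viable.
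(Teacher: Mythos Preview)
Your decomposition into a smooth piece $\IE_\theta\{Wf_z(W)-W^*f_z(W^*)\}$ and a jump piece $\IE_\theta\{\I_{W\le z}-\I_{W^*\le z}\}$ is sound, and the smooth piece is indeed $O(1/r_\theta)$ under \eqref{26}. But two gaps in the treatment of the jump piece prevent the bootstrap from closing.

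First, the jump $|\I_{W\le z}-\I_{W^*\le z}|\le 1$ carries no $|D|$ factor. Consequently on the exceptional sets you would need $\IP_\theta[F_{\theta,1}^c]$ and $\IP_\theta[F_{\theta,2}^c]$ to be $O(1/r_\theta^2)$, whereas \eqref{26} and \eqref{29} only give $\IE_\theta\{|D|\,(1-I_{F})\}=O(1/r_\theta^2)$. On the main event your conditional concentration bound has the right shape, but after averaging, the recursive term $\IE_\theta\{K_\Psi\}$ is of order $\Gamma/r_\theta$ via the first bound in \eqref{20}, not $\Gamma/r_\theta^2$ as you assert: there is no additional factor of $\bar D$ present to supply the extra $1/r_\theta$, and the constant in front of $\Gamma$ is inherited from the ratio bounds in \eqref{18} with no mechanism to force it below $1$. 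Relatedly, the non-recursive remainder would involve $\IE_\theta\{\bar D+\bar B\}$, which the hypotheses do not bound; \eqref{31} controls only the $\bar D$-weighted product $\IE_\theta\{\bar D(\bar B+\bar D)I_{F_{\theta,2}}\}$. The paper's route is to work with \emph{smoothed} indicators $h_{z,\epsilon}$ and the increment bound \eqref{140}, which produces the indicator contribution already multiplied by $|D|$; see \eqref{152}. That single extra factor is precisely what allows \eqref{26}, \eqref{29} and \eqref{31} to be invoked as stated, and the free smoothing parameter $\epsilon$, chosen of order $1/r_\theta$, is what drives the recursive coefficient down to any prescribed $q<1$.

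Second, even granting a valid inequality $\Gamma\le A+\rho\,\Gamma$ with $\rho<1$, the deduction $\Gamma\le A/(1-\rho)$ requires $\Gamma<\infty$, which is not known a priori. The paper sidesteps this by establishing instead a pointwise recursion for $a(\theta)=r_\theta K_\theta$ and appealing to Lemma~\ref{lem10}; the second bound in \eqref{20} is exactly condition~(A4) there, and its role is to exclude the unbounded (exponentially growing) solutions of the recursion.
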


Many of the conditions of Theorem \ref{thm2}, as for Theorem \ref{thm1}, can be shown to be satisfied using inequalities on moments. The proofs of Theorems \ref{thm1} and~\ref{thm2} appear in Section \ref{sec4}.

\subsection{Applications}

We apply Theorems \ref{thm1} and \ref{thm2} to obtain new results in two examples; the proofs are deferred to  Sections \ref{sec1} and \ref{sec3}.

The first examples invokes Theorem \ref{thm1} for Stein couplings for the normal approximation of the number $Y$ of isolated vertices in the \ER\, graph~$\cG \sim \ERRG(n,m)$ on $n$ vertices, having exactly $m$ edges, distributed uniformly at random. This model is related to the one where edges between each pair of vertices are chosen independently with some fixed probability, but in the model we consider the indicators that vertices are isolated exhibit a non-trivial global dependence since the total number of edges is fixed. In fact, while in the model with independent edges these indicators are positively correlated, the effect of the global dependence in $\ERRG(n,m)$ is stronger, resulting in a negative correlation; see proof of Lemma~\ref{lem5}.  

Related work was done by \cite{Kord87} on the number of isolated vertices in the {\ER}  graph model, although his general framework is not applicable here.The boundedness of the second derivative of the solution to the Stein equation on page 132 is shown only for the points where the second derivative exists, whereas, in order to perform the Taylor expansion on page 135, it is needed to hold everywhere; we were thus not able to reproduce his final results. In addition, the fixed number of edges model does not appear to satisfy the condition on page 134 of his work. 
We also mention the work by \cite{Go13}, who considered vertex degrees in general, though it only addressed the independent edge model.

Theorem \ref{thm3} provides the following bound on the Kolmogorov distance between the standardized variable $Y$ and the normal.

\begin{theorem}\label{thm3} Let $Y$ count the number of isolated vertices in the \ER\, graph
$\cG \sim \ERRG(n,m)$ on $n$ vertices, having exactly $m$ edges, distributed uniformly at random. Then, with $\mu_{n,m}$ and $\sigma_{n,m}^2$ the mean and variance of $Y$, letting 
$W=(Y-\mu_{n,m})/\sigma_{n,m}$ when $\sigma_{n,m}>0$ and zero otherwise, with 
\bea \label{35}
\Theta=\left\{(n,m)\,:\,\text{$n\geq3$, $0< m< {n\choose2}$}\right\},
\ena
there exists a universal constant $C>0$ such that, for all $(n,m) \in \Theta$,
\be{
	\sup_{z\in\IR}\babs{\IP_{n,m}[W\leq z]-\Phi(z)}\leq\frac{C}{r_{n,m}}
}
where
\ben{\label{36}
	r_{n,m} = 
	\frac{\sigma_{n,m}^3}{\mu_{n,m}(1+\frac{m^2}{n^2})}.
}
\end{theorem}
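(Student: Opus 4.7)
The plan is to apply Theorem \ref{thm1} to a size-bias based Stein coupling for $Y$. Writing $Y = \sum_{i=1}^n X_i$ with $X_i$ the indicator that vertex $i$ is isolated, I choose a vertex $I$ uniformly at random from $[n]$, then form $\mathcal{G}'$ from $\mathcal{G}$ by deleting the $d_I$ edges incident to $I$ and replacing them with $d_I$ fresh edges sampled uniformly without replacement from the pairs in $[n] \setminus \{I\}$ not already used. Conditionally on $I$, $\mathcal{G}'$ is uniform on $m$-edge graphs in which $I$ is isolated, so $Y' := Y(\mathcal{G}')$ has the $Y$-size biased distribution. With $W = (Y-\mu_{n,m})/\sigma_{n,m}$, $W' = (Y'-\mu_{n,m})/\sigma_{n,m}$, and $G = \mu_{n,m}/\sigma_{n,m}$, the triple $(W,W',G)$ is a Stein coupling as in \eqref{2}.

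For the induction, take $\cF_\theta$ to be the $\sigma$-algebra generated by $I$ and the set of neighbors of $I$ in $\mathcal{G}$. Conditionally on $\cF_\theta$, the restriction of $\mathcal{G}$ to $[n]\setminus\{I\}$ is uniform among graphs on $n-1$ vertices with $m - d_I$ edges, so I set $V$ equal to the number of isolated vertices of this restriction, with $\Psi(\theta,\omega) = (n-1,\,m-d_I(\omega))$; then \eqref{14} holds on the event $F_{\theta,2}\subseteq\{\Psi\in\Theta\}$. Since $V$ and $Y$ differ only at vertex $I$ itself and possibly at its $d_I$ neighbors (each of which may appear isolated in the subgraph but not in $\mathcal{G}$), one has $|Y-V|\le d_I+1$, so $\overline B = c(d_I+1)/\sigma_{n,m}$ works. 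Taking $F_{\theta,1} = \{d_I \le t_{n,m}\}$ for a threshold $t_{n,m}$ slightly above $\IE d_I = 2m/n$, on $F_{\theta,1}$ one has $|D|$ dominated by $\overline D$ of order $(d_I+1)/\sigma_{n,m}$ and $|G| = \overline G$. Standard hypergeometric concentration gives $\IE d_I^2 = O(m/n + m^2/n^2)$ along with strong tails, which together with matching two-sided estimates for $\mu_{n,m}$ and $\sigma_{n,m}^2$ (where the lower bound on $\sigma_{n,m}^2$ requires care because the fixed-edge constraint forces the $X_i$ to be negatively correlated) verifies \eqref{12}, \eqref{15}, the second part of \eqref{17}, and Condition \eqref{18}; the factor $1+m^2/n^2$ in $r_{n,m}$ is precisely what is needed so that $r_{n,m}\,\IE[\overline G\,\overline D^{\,2}\overline B\,I_{F_{\theta,2}}]$ stays bounded.

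The main obstacle will be the first bound in \eqref{10}, namely $\IE_\theta\babs{\IE_\theta(1-GD\mid W)} = O(1/r_{n,m})$. I plan to apply \eqref{22} with $\cH$ the enlarged $\sigma$-algebra generated by $I$ together with the full edge set of $\mathcal{G}$; conditionally on $\cH$, the only remaining randomness in $GD$ comes from the resampling of $d_I$ replacement edges, so $\IE(GD\mid\cH)$ reduces to an explicit affine function of $d_I$ and of the number of isolated vertices in $[n]\setminus\{I\}$. A careful variance computation then reduces the bound to controlling covariances between pairs of vertex-isolated indicators and the degree $d_I$, which can be handled via the negative-correlation structure of $\ERRG(n,m)$ combined with hypergeometric moment identities. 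Finally, parameters where $r_{n,m} \le \overline r$ (in particular the regimes $m = O(1)$ and $m = \binom{n}{2} - O(1)$) fall outside $\Goodset$ and the conclusion holds trivially there by choosing $C$ large enough.
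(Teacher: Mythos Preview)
Your coupling is genuinely different from the paper's. You use the size-bias coupling with constant $G=\mu_{n,m}/\sigma_{n,m}$; the paper instead uses Construction~2A of \cite{ChRo10}, picking a uniform vertex $\Vee$ and setting $G=-\frac{n}{\sigma_{n,m}}(I_\Vee-\mu_{n,m}/n)$, with $W'$ the standardized isolated-vertex count in the $(n-1)$-vertex graph $\cG^\Vee$ obtained by deleting $\Vee$ and redistributing its edges. The two are closely related: your $Y'$ equals $1+Y^\Vee$ (adding back $\Vee$ as an isolated vertex), and using that $I_\Vee(Y^\Vee-Y)=-I_\Vee$ one checks $G_{\mathrm{sb}}D_{\mathrm{sb}}-G_{\mathrm{paper}}D_{\mathrm{paper}}=(\mu_{n,m}-nI_\Vee)/\sigma_{n,m}^2$, whose conditional variance given the graph is $1/\sigma_{n,m}^2\le \mu_{n,m}(1+m^2/n^2)/\sigma_{n,m}^4$ and is therefore absorbed. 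The paper explicitly remarks that size bias would also work for isolated vertices (though not for general degree functionals), so your choice is legitimate. Your inductive data $V$, $\Psi$, $\overline D$, $\overline B$, $F_{\theta,1}$ match the paper's almost exactly; your $\cF_\theta$ is slightly larger than the paper's $\sigma(\Vee,d_\Vee)$, which is harmless.

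The real gap is the first bound in \eqref{10}, which you correctly flag as the main obstacle but treat far too lightly. First, your description of $\IE(GD\mid\cH)$ is not accurate: with $V$ the isolated-vertex count in the restriction of $\cG$ to $[n]\setminus\{I\}$ and $q(k)$ the probability a fixed available vertex avoids all $k$ resampled edges, one finds $\IE(Y'-Y\mid\cG,I)=1+q(d_I)\,V-Y$; since $V=Y-X_I+N_1(I)$ with $N_1(I)$ the number of degree-one neighbours of $I$, this expression also depends on $N_1(I)$ and on $Y$ itself, and $q$ is not linear. More importantly, obtaining $\Var\IE(GD\mid\cH)\le C\mu_{n,m}(1+m^2/n^2)/\sigma_{n,m}^4$ is the bulk of the paper's proof: rather than a direct covariance calculation, the paper conditions on $(\pi,\Sigma)$, applies an Efron--Stein-type inequality for functions of independent permutations (Lemma~\ref{lem1}), and then carries out a lengthy case analysis ($R_{g,1},R_{f,1},R_{g,2},R_{f,2}$) tracking how the coupling reacts to a single transposition of $\pi$ or a resample of one $\sigma_v$. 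Your one-line plan via ``covariances between pairs of vertex-isolated indicators and the degree $d_I$'' does not yet constitute a route to this bound; you would need either to adapt the Efron--Stein argument to your coupling (straightforward in light of the relation above) or to supply a direct variance computation of comparable depth.
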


\begin{remark} In order to better understand the bounds obtained in Theorem~\ref{thm3}, we now discuss in more detail the different regimes at which $m$ and $n$ can tend to infinity. To this end, denote by $a(n)\sim b(n)$ that $\lim a(n)/b(n) =1$, and by $a(n)\asymp b(n)$ that $\liminf a(n)/b(n)>0$ and $\limsup a(n)/b(n)<\infty$. By Lemma \ref{lem7}, if $n$ and $m$ tend to infinity so that $\max\{m/n^2,m^2/n^3\} \rightarrow 0$, then 
\begin{align*}
\mu_{n,m} \sim n e^{-2m/n} \qmq{and} \sigma_{n,m}^2 \sim  n\phi(2m/n) \qmq{for} \phi(x)=e^{-x}(1-e^{-x}(1+x)).
\end{align*}
Hence, we have
\begin{align*}
\frac{ \sigma_{n,m}^2}{\mu_{n,m}}\sim 1-e^{-2m/n}\left(1+\frac{2m}{n}\right) ,
\end{align*}
so that
\begin{align*}
  r_{n,m} \sim \sigma_{n,m}  \frac{1-e^{-2m/n}(1+\frac{2m}{n})}{1+(m/n)^2} .
\end{align*}
For $n \asymp m$, the central domain, it follows that  $r_{n,m} \asymp \sigma_{n,m}$, and moreover, in the special case where $m \sim cn$, 
		\begin{align*}
		\mu_{n,m} \sim ne^{-2c} \qmq{and} \sigma_{n,m}^2 \sim ne^{-2c}\left(1-  e^{-2c}(1+2c)\right). 
		\end{align*}
Regarding lower bounds, \cite[Section~6]{Englund81} shows that for the standardized number of occupied cells in a uniform occupancy model with $n$ balls and $m$ boxes, 
\begin{align*}
	\sup_{z \in \mathbb{R}}|\IP[W_{n,m} \le z]-\IP[Z\le z]| \ge 0.087/\max(3,\sigma_{n,m}).
\end{align*}
Englund’s argument holds without changes for any random variable with finite variance supported on the integers, and so also for the number of isolated vertices in our model. Hence, since in the central domain $r_{n,m} \asymp \sigma_{n,m}$, the rate function is of optimal order.

If $m\to\infty$ and $m/n\to0$, the left domain, say, then 
\be{
  r_{n,m}\asymp \frac{\sigma_{n,m}m^2}{n^2} \asymp \frac{m^3}{n^{5/2}}
}
since $1- e^{-x}(1+x) \sim x^2/2$ as $x\to 0$ for the first relation, and $\sigma_{n,m}^2\asymp m^2/n$ for the second. In this case,  Englund's lower bound is not achieved since $r_{n,m}=\lito(\sigma_{n,m})$. Nonetheless, the bound is informative as long as $r_{n,m}\to\infty$, which is the case as long as $m / n^{5/6}\to\infty$, such as when $m=cn^{\alpha}$ for $c>0$ and $5/6<\alpha<1$.

If $m/n\to\infty$, the right domain, using $\sigma^2_{n,m}\asymp n e^{-2m/n}$ for the second relation we have
\be{
  r_{n,m}\sim \frac{\sigma_{n,m}n^2}{m^2} \asymp \frac{e^{-m/n}n^{5/2}}{m^2}, 
}
so Englund's lower bound is not attained. However, $r_{n,m}$  goes to infinity when $m \le \alpha\, n\log n$ for $0<\alpha<1/2$.
\end{remark}

In the second example, we use the zero bias coupling constructed in \cite[Theorem 3.1]{FuGo11} in Theorem \ref{thm2} to give a bound on the normal approximation of the content $Y$ of a Young tableux under Jack$_{\alpha}$ measure over a range of large $\alpha$. In more detail, we recall that a partition of a positive integer~$n$ can be represented as a vector $\Lambda=(\lambda_1,\ldots,\lambda_p)$ of non-increasing, positive integers summing to $n$, where $p$ is the number of parts of the partition. For instance, $\Lambda=(4,2,1)$ corresponds to a partition of $n=7$ with $p=3$. In turn, the partition $\Lambda$ can be represented by a tableaux with $p$ rows of equal sized boxes, whose $j^{\mathrm{th}}$ row is of length $\lambda_j$, such as in \eqref{38}. 

The Jack$_{\alpha}$ measure on tableaux, defined for $\alpha>0$, recovers the Plancherel measure when specializing to the case $\alpha=1$. Under Jack$_{\alpha}$, see \cite{F1} for instance, the 
probability of a partition $\Lambda$ of $n$ is given by
\bea \label{37}
\mbox{Jack}_\alpha(\Lambda)=
\frac{\alpha^n n!}{\prod_{x \in \Lambda} (\alpha a(x) + l(x) +1)
	(\alpha a(x) + l(x) + \alpha)},
\ena
where the product is over all boxes $x$
in the partition, $a(x)$ denotes the number of boxes in the same
row of $x$ and to the right of $x$ (the ``arm'' of $x$), and $l(x)$
denotes the number of boxes in the same column of $x$ and below $x$
(the ``leg'' of $x$). For each tableaux representing a partition of $n$ we may define the $\alpha$-content of any individual box by 
\[ c_{\alpha}(x) = \alpha(\mbox{column number of $x - 1$}) - (\mbox{row number of $x - 1$}),\]
as depicted in the following tableaux for the partition $(4,2,1)$ of 7:
\ben{\label{38}
\ytableausetup{mathmode, boxsize=2em}
\begin{ytableau}0 & \alpha & 2\alpha & 3\alpha  \\  - 1 & \scriptstyle \alpha-1 \\-2 \end{ytableau}
}
Here we study the distribution of the standardized sum of the $\alpha$-contents over all boxes in the tableaux, that is, 
\bea \label{39}
W = \frac{Y}{\sqrt{\alpha {n \choose 2}}}, \qmq{where} Y = \sum_{x \in
	\Lambda_n} c_{\alpha}(x)
\ena 
and where the partition $\Lambda_n$ of $n$ is sampled from the Jack$_{\alpha}$ measure in \eqref{37}.

\cite{F1} proved an $\bigo(n^{-1/4})$ bound for the error in the Kolmogorov metric for the normal
approximation of $W$, improved by \cite{F4} using martingales to $\bigo(n^{-1/2+\epsilon})$ for any $\epsilon>0$, and by \cite{F3} to $\bigo(n^{-1/2})$ using Bolthausen's inductive
approach and Stein's method, but without an explicit constant. \cite{HO} prove a central limit theorem, with no error bound, for $W_{n,\alpha}$ using quantum probability.

\cite{FuGo11} prove the bound 
\bea \label{40}
d_1(W, Z) \le \sqrt{\frac{2}{n}} \left(2 + \sqrt{2+\frac{\max(\alpha,1/\alpha)}{n-1}}  \right) \qm{for all $n \ge 2, \alpha>0$,}
\ena
in the Wasserstein metric $d_1$,
where $Z$ is a standard normal variable. In addition to providing explicit constants, this bound also highlights the role of $\alpha$. A natural question it brings is whether a bound in the Kolmogorov metric can be shown that has this same dependence on $\alpha$. A few weeks before the current work was posted,
\cite[Theorem~1.1]{ChTh17} proved the bound
\beas
\sup_{x \in \mathbb{R}} |\IP_{n,\alpha}[W \le x] - \IP[Z \le x]| \le 9\left( \frac{1}{\sqrt{n}} \vee \frac{(\sqrt{\alpha} \vee 1/\sqrt{\alpha}) \log n}{n}\right) ,
\enas
which achieves this goal with an explicit constant to within a logarithmic factor. 

Here, given any $\epsilon \in (0,1)$, we show that, in the `large $\alpha$' region $\alpha \ge n^{1+\epsilon}$, this log factor may be removed, resulting in the bound having the same $\alpha$ dependence as \eqref{40}. That is, as $\alpha \ge n$ over the region we consider, the ratio between the right hand sides of \eqref{40} and \eqref{41} is bounded away from zero and infinity. This same result, with an explicit constant, was also achieved by \cite[Proposition~4.1]{ChTh17} by applying a different approach. We do not consider $\epsilon > 1$, as Theorem \ref{thm5} below shows that this case is degenerate. 

\begin{theorem}\label{thm4}
For $W$ as given in \eqref{39} with $\Lambda_n$ sampled according to Jack$_{\alpha}$ measure for some $n \ge 2$, for every $\epsilon \in (0,1)$ there exists a constant $C$ depending only on $\epsilon$ such that

\bea \label{41}
\sup_{z\in \IR}\babs{\IP_{n,\alpha}[W\leq z] -
	\IP[Z\leq z]} \leq \frac{C \sqrt{\alpha}}{n} \qquad \text{for all $n \ge 2$ and $\alpha \ge n^{1+\epsilon}$.}
\ena
\end{theorem}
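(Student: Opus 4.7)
The plan is to apply Theorem~\ref{thm2} directly to the zero bias coupling $(W,W^*)$ constructed in \cite[Theorem~3.1]{FuGo11}. Set the parameter $\theta=(n,\alpha)$, let $\Theta=\{(n,\alpha):n\ge 2,\,\alpha\ge n^{1+\epsilon}\}$, and take the inverse rate function $r_\theta=n/\sqrt{\alpha}$, so that the bound $C/r_\theta$ supplied by Theorem~\ref{thm2} becomes exactly the target $C\sqrt{\alpha}/n$ of \eqref{41}. The good set is $\Goodset=\{\theta\in\Theta:r_\theta>\bar r\}$ for a threshold $\bar r=\bar r(\epsilon)$ chosen large enough that the induction can close; parameters in the finite set $\Theta\setminus\Goodset$ are handled trivially by enlarging $C$. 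The inductive reduction decreases $n$ by one with $\alpha$ held fixed, so $\Psi=(n-1,\alpha)$.

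The Fulman--Goldstein construction obtains the $W$-zero biased variable $W^*$ by choosing a random box $x^\star$ in $\Lambda_n$ and performing a local modification of the tableau; let $\cF_\theta$ be the $\sigma$-algebra generated by $x^\star$ together with the local data describing this modification, and define $V$ as the appropriately scaled sum of $\alpha$-contents over $\Lambda_n\setminus\{x^\star\}$. On the event $F_{\theta,2}\in\cF_\theta$ on which $x^\star$ is a removable corner, the reduced tableau is Jack$_\alpha$-distributed on $n-1$ boxes, which yields \eqref{28} with $\Psi=(n-1,\alpha)$. The ratios in~\eqref{18} are then immediate: $\sigma^2_\theta/\sigma^2_\Psi=\binom{n}{2}/\binom{n-1}{2}=n/(n-2)$, and $r_\theta/r_\Psi=n/(n-1)$, $r_\Psi/r_\theta=(n-1)/n$, all uniformly bounded on $\Goodset$.

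The substance of the proof is the verification of the moment conditions~\eqref{26},~\eqref{29} and~\eqref{31}. With normalization $\sigma_\theta=\sqrt{\alpha\binom{n}{2}}$, a typical corner content satisfies $|c_\alpha(x^\star)|/\sigma_\theta=O(\sqrt{\alpha}/n)=O(1/r_\theta)$, giving the orders of $\overline D$ and $\overline B$. The constraint $\alpha\ge n^{1+\epsilon}$ forces Jack$_\alpha$ to concentrate near a small family of shapes; comparing the weights in \eqref{37} across partitions of different geometries yields an $\alpha^{-k}$-type decay of the probability of the atypical event (for $k$ depending on $\epsilon$), which multiplied by the prefactor $r_\theta^2=n^2/\alpha$ still goes to zero. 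The second half of \eqref{26} is then bounded by $r_\theta\IE_\theta\{|DW|+\overline D\}\lesssim r_\theta\cdot\sqrt{\alpha}/n=O(1)$, and the main term of \eqref{31} follows from $\overline D,\overline B=O(1/r_\theta)$ and boundedness of $\IP_\theta(F_{\theta,2})$.

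The main obstacle is the quantitative concentration estimate for Jack$_\alpha$ at $\alpha\ge n^{1+\epsilon}$, sharp enough to absorb the factor $r_\theta^2=n^2/\alpha$ that appears in front of every tail expectation in~\eqref{26},~\eqref{29},~\eqref{31}, together with matching moment bounds on $|c_\alpha(x^\star)|$ and $|Y-V|$. This requires tracking powers of $\alpha$ with some care in the ratio of \eqref{37} across neighbouring partitions, and it is precisely this computation that dictates the hypothesis $\epsilon>0$ and the dependence of $C$ on $\epsilon$; once the combinatorial estimate is established, the remaining verifications for Theorem~\ref{thm2} and the conclusion~\eqref{41} are routine.
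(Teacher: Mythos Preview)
Your high-level plan is the same as the paper's: apply Theorem~\ref{thm2} to the Fulman--Goldstein zero bias coupling with $r_{n,\alpha}=n/\sqrt{\alpha}$ and $\Psi=(n-1,\alpha)$, and the verification of \eqref{18} is exactly as you write. However, your description of the coupling itself is wrong, and this error propagates into an unworkable choice of $\cF_\theta$, $V$, and $F_{\theta,2}$.

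The Fulman--Goldstein construction is not ``choose a random box $x^\star$ in $\Lambda_n$ and modify''. It is built from Kerov's growth process: one first samples $\Lambda_{n-1}$ according to Jack$_\alpha$ on partitions of $n-1$, then adds a box to obtain $\Lambda_n$, giving $W=V/\sigma_{n,\alpha}+T$ with $V=\sum_{x\in\Lambda_{n-1}}c_\alpha(x)$ and $T=c_\alpha(x_n)/\sigma_{n,\alpha}$. The zero biased variable is $W^*=V/\sigma_{n,\alpha}+T^*$, with $T^*=UT^\dagger+(1-U)T^\ddagger$ obtained by square-biasing two further conditionally independent additions to the \emph{same} $\Lambda_{n-1}$. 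The crucial point is that $V$ is common to $W$ and $W^*$, and $\Lambda_{n-1}$ already has the Jack$_\alpha$ law on $n-1$ boxes \emph{unconditionally}. Hence the paper takes $\cF_{n,\alpha}$ to be the trivial $\sigma$-algebra and $F_{\theta,2}=\Omega$; the identity \eqref{28} holds everywhere, and \eqref{29} is vacuous. Your event ``$x^\star$ is a removable corner'' and the accompanying claim that the reduced tableau is Jack$_\alpha$ on $n-1$ are neither needed nor justified: removing an arbitrary (even corner) box from a Jack$_\alpha$ sample of size $n$ does not in general yield a Jack$_\alpha$ sample of size $n-1$ conditional on the box removed.

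The second gap is that you do not identify the event that actually carries the work, namely $F_{\theta,1}=\{\lambda_1\le 2/\epsilon\}$, where $\lambda_1$ is the length of the first row of $\Lambda_{n-1}$. Your assertion that a ``typical corner content'' is $O(\sqrt{\alpha}/n)$ after normalization is false without this restriction: the content of a box added at the end of the first row is $\alpha\lambda_1$, and $\lambda_1$ can be as large as $n-1$, giving $|T|$ of order $\sqrt{\alpha}$, not $\sqrt{\alpha}/n$. On $F_{\theta,1}$ one gets $|D|\le \overline{D}=10\sqrt{\alpha}/(n\epsilon)$, and the complementary probability is handled by the concrete bound $\IP[\lambda_1\ge 2/\epsilon]\le C\alpha/n^2$ obtained from \cite[Lemma~6.6]{F1}; this is what makes $r_\theta^2\,\IE\{|D|(1-I_{F_{\theta,1}})\}\le r_\theta^2(\IE D^2)^{1/2}\IP[F_{\theta,1}^c]^{1/2}$ bounded, not a vague ``$\alpha^{-k}$-type decay''. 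Once the coupling is understood correctly, the moment checks in \eqref{26} and \eqref{31} are short computations using $\IE T^2=2/n$ and the fourth-moment bound from \cite{FuGo11}.
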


We remark that by applying the reasoning at the end of the proof of Theorem~4.1 of \cite{FuGo11} the result holds also for $\alpha \le n^{-1-\epsilon}$ when replacing the $\alpha$ on the right hand side by $1/\alpha$. In the computations that follow, $C$ without subscript will denote a universal constant whose value may change from line to line, and for $n$ a non-negative integer, $[n]$ will denote the set $\{1,\ldots,n\}$.

\section{Isolated vertices in the Erd\H{o}s-R\'enyi random graph} \label{sec1}

In this section we prove Theorem~\ref{thm3}. We begin by 
reviewing Construction~2A of \cite{ChRo10} for Stein couplings. Let ${\bf X}=(X_1,\ldots,X_n)$ be a collection
of mean zero random variables, and let $I$ be a random index uniformly distributed over $[n]$, independent of ${\bf X}$. Let $W=
\sum_{i \in [n]} X_i$ and suppose that for each $i=1,\ldots,n$ there exists $W_i'$ such that
\bea \label{42}
\IE\clc{X_i\given W_i'}=0.
\ena
Then, with~$G=-nX_I$, the triple~$(W,W_I',G)$ is a Stein coupling. To verify the claim, first note that
\bes{
	\IE\{Gf(W_I')\}& =-\IE\{n X_I f(W_I')\}=-\IE \sum_{i \in [n]} X_i f(W_i') \\
	& = -\IE \sum_{i \in [n]} \IE\{X_i|W_i'\}f(W_i') =0.
}
On the other hand,
\beas
-\IE\{Gf(W)\}=\IE\{nX_If(W)\}=\IE\sum_{i \in [n]} X_i f(W)=\IE\{Wf(W)\};
\enas
so \eqref{2} holds.

\subsection{Isolated vertices in $\ERRG(n,m)$}\label{sec2}
Consider the \cite{ER60} random graph $\cG\sim \ERRG(n,m)$ on $n$ vertices, having exactly $m$ edges, distributed uniformly at random. Let $d_v$ be the degree of vertex $v \in [n]$, and consider the number of isolated vertices
\be{
	Y= \sum_{v=1}^n \I[d_v = 0].
}
With $N={n\choose 2}$, the mean and variance of $Y$ are given by, respectively,

\be{ 
	\mu_{n,m}
	= n\frac{{N-(n-1)\choose m}}{{N\choose m}} \qmq{and}
	\sigma^2_{n,m} = \mu_{n,m} + n(n-1)\frac{{N-(2n-3)\choose m}}{{N\choose m}} - \mu_{n,m}^2.
}

We remark that though there may be a choice of couplings for a given situation, the coupling we have chosen will work for the more general problem where $Y$ is a sum
\begin{align*}
Y=\sum_{v=1}^n h_v(d_v)
\end{align*}
of functions $h_v$ of the degree $d_v$ of vertex $v$. For instance, the size bias coupling will work, as in \cite{Go13}, for counting the number of vertices having specified degrees, but not in this greater generality.

\begin{proof}[Proof of Theorem~\ref{thm3}] 
The proof consists of the setting up the framework, and then checking that Conditions~\eqref{6}--\eqref{18} hold, with Condition~\eqref{8} requiring the construction of a Stein coupling. First, let $\cE_n$ be the enumeration of all $N$ unordered pairs $\{v,w\}\subset [n]$ with $v\neq w$, given by
\ben{\label{43}
\cE_n=\bclr{\{1,2\},\dots,\{1,n\},\{2,3\},\dots,\{2,n\},\dots,\{n-1,n\}}.
}
Let~$\pi$ be a uniformly chosen random permutation of~$[N]$. We will describe the construction of a graph~$\cG(m,\pi)$,  determined by~$m$ and~$\pi$, that has distribution~$\ERRG(n,m)$. As~$n$ is determined by~$N$, and hence by~$\pi$,~$n$ may be omitted in the notation for the graph; the same principle will be applied without comment for like quantities that appear later. 

We construct~$\cG(m,\pi)$ as follows. For each~$\{v,w\} \subset [n]$ with~$v< w$, connect vertices~$v$ and~$w$ with an edge if and only if 
\ben{\label{44}
\pi^{-1}(i)\leq m,
}
where $i$ is the index in the enumeration \eqref{43} corresponding to the pair $\{v,w\}$. Clearly this construction results in a graph with~$m$ edges, precisely, those with labels~$\{\pi(1),\ldots,\pi(m)\}$. Since~$\pi$ is uniform it is immediate that~$\cG(m,\pi)\sim\ERRG(n,m)$. Let~$d_v(m,\pi)$ be the degree of vertex~$v\in[n]$ in~$\cG(m,\pi)$, let 
\ben{\label{45}
I_v(m,\pi)=\I[d_v(m,\pi)=0]
\qmq{and}
Y(m,\pi) = \sum_{v=1}^n I_v(m,\pi).
}
We now verify the conditions of Theorem \ref{thm1} with~$\Theta$ and~$r_{n,m}$ as given in \eqref{35} and \eqref{36}, respectively. 

\begin{description}[leftmargin=0em,parsep=\parskip,listparindent=\parindent]
\item[Condition~(\ref{6}).] Let $n_0$, $m_0$, $c_0$ and $C_0$ be as in Lemma~\ref{lem7}.
Now obtain $\overline{r}$ in the definition \eqref{7} of $\Goodset$ through Lemma~\ref{lem8} and the choices
\ben{ \label{46}
	\overline{n}=(2n_0)\vee 344, \qquad	\overline{m}=(2m_0)\vee (8C_0)  \vee 28, 
	\qmq{and}
	\overline{c}= 1 \wedge \frac{c_0}{2}\wedge\frac{1}{3C_0^{1/2}}.
}
Since our definition of $r_{n,m}$ in \eqref{36} implies that $r_{n,m} = 0$ whenever $\sigma^2_{n,m}=0$, the condition that $\sigma^2_{n,m}>0$ on \Goodset is satisfied. 
Note that by Lemma~\ref{lem8}
\ben{\label{47}
n\geq \overline{n} \qmq{and}  \overline{m}\leq m\leq \overline{c} n^{3/2}\quad\text{ whenever $(n,m)\in\Goodset$}.
}

\item[Condition~(\ref{8}).] For $(n,m) \in \Goodset$, let 
\ben{\label{48}
	W = \frac{Y(m,\pi)-\mu_{n,m}}{\sigma_{n,m}},
}
and set $W=0$ otherwise. Assume~$(n,m)\in \Goodset$.
Let~$\Sigma=(\sigma_1,\dots,\sigma_n)$ be a collection of uniform random permutations of~$[N]$, with~$\pi,\sigma_1,\ldots,\sigma_n$ mutually independent.
The purpose of the following algorithm is to take the graph~${\cal G}(m,\pi)$ as input and to construct, for each vertex~$v\in[n]$, a graph~$\cG^v(m,\pi,\sigma_v)$ on the~$n-1$ vertices~$[n]\setminus \{v\}$, having distribution~$\ERRG(n-1,m)$, independent of~$d_v(m,\pi)$, and which can be closely coupled to~$\cG(m,\pi)$.

We first describe the algorithm in words: Initialise counters $k$ and $i$ that respectively record the number of edges successfully relocated, and the index of a candidate edge for possible addition to the new graph; for each given vertex~$v \in [n]$, begin with~$\cG(m,\pi)$ and relocate the~$d_v(m,\pi)$ edges incident to~$v$ uniformly by, incrementing $i$ when needed, adding~$\cE_n(\sigma_v(i))$ as a new edge when it connects two vertices, neither of which are incident to~$v$ (Step 6), and which are not already connected (Step 7). The counter~$k$ records the number of edges successfully relocated, and the set~$L^v(m,\pi,\sigma_v)$ holds their locations (that is, indices) in~${\cal E}_n$. At termination, the set $L^v(m,\pi,\sigma_v)$ will have size~$d_v(m,\pi)$.

\medskip
\makeatletter\newcommand\mynobreakpar{\par\nobreak\@afterheading}\makeatother
\noindent\textbf{Algorithm 1.} Fix~$v\in[n]$.\mynobreakpar
\begin{enumerate}
	\item Let~$L^v(m,\pi,\sigma_v)\leftarrow\emptyset$
	\item Let~$\cG'$ be equal to~$\cG(m,\pi)$, but with vertex $v$ and all~$d_v(m,\pi)$ edges incident to~$v$ removed. 
	\item Let~$k\leftarrow 0$ and~$i\leftarrow 0$.
	\item\label{49} If~$k =d_v(m,\pi)$, then denote the resulting graph by~${\cal G}^v(m,\pi,\sigma_v)$, and stop.
	\item\label{50} Let~$i\leftarrow i+1$.
	\item If~$v\in \cE_n(\sigma_v(i))$, then return to Step \ref{50}.
	\item If~$\pi^{-1}(\sigma_v(i))\leq m$, that is, if~$\cE_n(\sigma_v(i))$ is an edge in $\cG(m,\pi)$, then 
	return to Step \ref{50}.
	\item In~$\cG'$ connect the vertices in~$\cE_n(\sigma_v(i))$ by an edge, and let~$L^v(m,\pi,\sigma_v)\leftarrow L^v(m,\pi,\sigma_v) \cup \{\sigma_v(i)\}$.
	\item Let~$k\leftarrow k+1$.
	\item Return to Step \ref{49}.
\end{enumerate}
It is not difficult to see that the algorithm will succeed in redistributing the edges incident on~$v$ if and only if~$m\leq {n-1\choose 2}$, which is guaranteed by our choice of~\Goodset. Note that, given~$m$,~$\pi$ and~$\sigma_v$, the construction of~$\cG^v(m,\pi,\sigma_v)$ from~$\cG(m,\pi)$ is \emph{deterministic} and hence, for given~$m$,~$\pi$ and~$\sigma_v$, will always result in the same graph~$\cG^v(m,\pi,\sigma_v)$. Note also that, although~$\cG^v(m,\pi,\sigma_v)$ has only~$n-1$ vertices, we keep the labeling from the original graph~$\cG(m,\pi)$. Since the order at which potential locations where the~$d_v(m,\pi)$ edges are added are sampled uniformly at random without replacement (via~$\sigma_v$), it is clear that~$\cG^v(m,\pi,\sigma_v)\sim\ERRG(n-1,m)$, up to vertex labeling.

Now, let~$W=W(m,\pi)$ as in \eqref{48}. With~$\Vee$ a uniformly chosen vertex from~$[n]$, independent of~$\pi, \sigma_1,\ldots,\sigma_n$, and recalling the notation in \eqref{45}, let
\ben{\label{51}
	G = -\frac{n}{\sigma_{n,m}}(I_\Vee(m,\pi)-\mu_{n,m}/n).
}
For $w \neq v$, let~$d^{v}_w(m,\pi,\sigma_v)$ be the degree of vertex~$w$ in the graph~$\cG^v(m,\pi,\sigma_v)$, let
\be{
	I^v_w(m,\pi,\sigma_v)=\I[d^{v}_w(m,\pi,\sigma_v)=0],
	\qquad
	Y^{v}(m,\pi,\sigma_v) = \sum_{w\in [n]\setminus v} I^v_w(m,\pi,\sigma_v),
}
and
\ben{\label{52}
	W' = \frac{Y^{\Vee}(m,\pi,\sigma_\Vee)-\mu_{n,m}}{\sigma_{n,m}}, \qmq{and hence,} D=\frac{Y^{\Vee}(m,\pi,\sigma_\Vee)-Y(m,\pi)}{\sigma_{n,m}}.
}
Since the distribution of~$\cG^v(m,\pi,\sigma_v)$ is the same regardless of the value of~$d_v(m,\pi)$, we conclude that 
$I_v(m,\pi)-\mu_{n,m}/n$ and~$Y^v(m,\pi,\sigma_v)$ are independent, so \eqref{42} holds, implying~$(W,W',G)$ is a Stein coupling.

\item[Condition~(\ref{9}).]  In what follows, consider a fixed $(n,m) \in \Goodset$, and drop the subscript~$\theta$ in the expectations that follow.
As $W$ is a function of $(\pi,\Sigma)$, using \eqref{22} we have
\besn{\label{53}
	\IE\babs{\IE(1-GD|W)} 
	& \leq \bclr{\Var\IE(GD|\pi,\Sigma)}^{1/2}.
}
Now, from \eqref{51} and \eqref{52}, we have
\bes{
	\Var\IE(GD|\pi,\Sigma) 
	& = \frac{1}{\sigma^4_{n,m}}\Var\sum_{v\in[n]}(I_v(m,\pi)-\mu_{n,m}/n)(Y(m,\pi)-Y^v(m,\pi,\sigma_v)).
}
Splitting the sum into two and using $\Var(X+Y)\leq 2\Var X+2\Var Y$, we have
\bes{
	\Var\IE(GD|\pi,\Sigma) 
	& \leq \frac{2}{\sigma^4_{n,m}}\Var(f_m(\pi,\Sigma)) + \frac{2\mu_{n,m}^2}{n^2\sigma^4_{n,m}}\Var(g_m(\pi,\Sigma)),
}
where
\be{ \label{54}
  f_{m}(\pi,\Sigma) = \sum_{v\in[n]}I_v(m,\pi)B_v(m,\pi,\sigma_v)
 \qmq{and} 
 g_{m}(\pi,\Sigma) = \sum_{v\in[n]}B_v(m,\pi,\sigma_v)
}
with $B_v(m,\pi,\sigma_v) = Y(m,\pi)-Y^v(m,\pi,\sigma_v)$.
Note that~$f_m(\pi,\Sigma)$ and $g_m(\pi,\Sigma)$ are deterministic functions of~$m$,~$\pi$ and~$\Sigma$. Applying Lemma~\ref{lem1} and using the notation as there, we obtain
\ben{\label{55}
  \Var\IE(GD|\pi,\Sigma) 
  \leq 
  \frac{1}{\sigma^4_{n,m}}\bclr{
    R_{g,1}+R_{f,1}+
  	R_{g,2}+R_{f,2}}
}
where
\ba{
	R_{g,1} & =\frac{\mu_{n,m}^2}{n^2} \sum_{i=1}^n\IE\bclr{g_m(\pi,\Sigma)-g_m(\pi,\Sigma_i')}^2,\\
  R_{f,1} & = \sum_{i=1}^n\IE\bclr{f_m(\pi,\Sigma)-f_m(\pi,\Sigma_i')}^2, \\
    R_{g,2} & =\frac{\mu_{n,m}^2}{n^2}\sum_{j=1}^{N-1} \IE\bclr{g_m(\pi,\Sigma)-g_m(\pi\tau_j,\Sigma)}^2,\\
  R_{f,2} & = \sum_{j=1}^{N-1} \IE\bclr{f_m(\pi,\Sigma)-f_m(\pi\tau_j,\Sigma)}^2.
}

\paragraph{Bounding $\boldsymbol{R_{g,1}}$.}
Note that
\ben{ \label{56}
	g_m(\pi,\Sigma)-g_m(\pi,\Sigma_i')
	=  B_i(m,\pi,\sigma_i)-B_i(m,\pi,\sigma_i'),
}
since all differences arising from the first sum in \eqref{54} cancel except the one with index~$v=i$. Applying the simple bound
\be{ 
	\abs{B_v(m,\pi,\sigma_v)}= \abs{Y(m,\pi)-Y^v(m,\pi,\sigma_v)} \leq 1+2d_i(m,\pi)
}
we obtain
\ben{\label{57}
	\abs{ B_i(m,\pi,\sigma_i)-B_i(m,\pi,\sigma_i')}
	\leq 2+4d_i(m,\pi).
}

Let $\Hyp(N,m,n)$ count the number of white balls among~$m$ draws from an urn with~$N$ balls,~$n$ of which are white and~$N-n$ black.
Note that the marginal distribution of the degree of any vertex in~${\cal G}(m,\pi)$ is~$\Hyp\bclr{N,m,n-1}$, and hence has mean $2m/n$, since the graph's~$m$ edges are uniformly sampled among all~$N$ possibilities, and exactly~$n-1$ of them are associated with a specific vertex. Hence, applying Lemma~\ref{lem2},  \eqref{56} and \eqref{57}, we obtain
\be{
	\IE\bclr{g_m(\pi,\Sigma)-g_m(\pi,\Sigma_i')}^2
	\leq C\left(1+ \frac{m^2}{n^2}\right),
}
where we recall $C$ denotes a universal constant, whose value may change from line to line. 
Thus, as $\mu_{n,m} \le n$,
\ben{\label{58}
	R_{g,1} \leq \frac{C\mu_{n,m}^2}{n}\left(1+ \frac{m^2}{n^2}\right) \leq C\mu_{n,m}\left(1+ \frac{m^2}{n^2}\right).
}

\paragraph{Bounding $\boldsymbol{R_{f,1}}$.}
As for $g_m$, we likewise have
\be{
	f_m(\pi,\Sigma)-f_m(\pi,\Sigma_i')
	= I_i(m,\pi)\bclr{ B_i(m,\pi,\sigma_i)-B_i(m,\pi,\sigma_i')}.
}
Noting that, if $I_i(m,\pi)=1$, we have $d_i(m,\pi)=0$ and hence $B_i(m,\pi,\sigma_i)=B_i(m,\pi,\sigma_i')=1$, it is immediate that
\ben{\label{59}
  R_{f,1}= 0.
}

\paragraph{Bounding $\boldsymbol{R_{g,2}}$.} In order to bound~$R_{g,2}$, with~$\tau_{ij}$ the transposition of~$i$ and~$j$, note first that
\ben{\label{60}
	g_m(\pi\tau_{ij},\Sigma)=g_m(\pi,\Sigma),
	\qquad \text{if~$i,j\leq m$ or~$i,j>m$,}
}
since~$g_m$ is a function of the graph~$\cG(m,\pi)$ and~$\Sigma$, and by \eqref{44}, the graph~$\cG(m,\pi)$ obtained from~$\pi$ does not change when swapping edge with edge or non-edge with non-edge.
Hence, averaging over~$\tau_j$, a transposition of $j$ and a uniformly chosen index in $\{j,\ldots,N\}$, yields
\be{
	R_{g,2} = \frac{\mu_{n,m}^2}{n^2} \sum_{j=1}^{m}\frac{1}{N-j+1}\sum_{i=m+1}^N
	\IE\bclr{g_m(\pi,\Sigma)-g_m(\pi\tau_{ij},\Sigma)}^2.
}
By exchangeability the expectation on the right hand side is constant for~$j \le m$ and~$i \ge m+1$; hence, for such $i$ and $j$,
\be{
	\IE\bclr{g_m(\pi,\Sigma)-g_m(\pi\tau_{ij},\Sigma)}^2  = \IE\bclr{g_m(\pi,\Sigma)-g_m(\pi\tau_{1,m+1},\Sigma)}^2,
} 
so that
\bes{
	R_{g,2} &=\frac{\mu_{n,m}^2}{n^2}\sum_{j=1}^m\frac{N-m}{N-j+1} \IE\bclr{g_m(\pi,\Sigma)-g_m(\pi\tau_{1,m+1},\Sigma)}^2\\
	&\leq \frac{\mu_{n,m}^2m}{n^2}\IE\bclr{g_m(\pi,\Sigma)-g_m(\pi\tau_{1,m+1},\Sigma)}^2.
}
Now,
\bes{
	&\IE\bclr{g_m(\pi,\Sigma)-g_m(\pi\tau_{1,m+1},\Sigma)}^2\\
	&\qquad\leq2\IE\bclr{g_m(\pi,\Sigma)-g_{m+1}(\pi\tau_{1,m+1},\Sigma)}^2\\
	&\qquad\quad+2\IE\bclr{g_{m+1}(\pi\tau_{1,m+1},\Sigma)-g_m(\pi\tau_{1,m+1},\Sigma)}^2\\
	&\qquad= 2\IE\bclr{g_m(\pi,\Sigma)-g_{m+1}(\pi,\Sigma)}^2
	+2\IE\bclr{g_{m+1}(\pi,\Sigma)-g_m(\pi,\Sigma)}^2\\
	&\qquad= 4\IE\bclr{g_m(\pi,\Sigma)-g_{m+1}(\pi,\Sigma)}^2;
}
here, we have first applied the inequality~$(x+y)^2\leq 2x^2+2y^2$, followed by \eqref{60} with~$m$ replaced by~$m+1$ to the first expectation in the expression that results to yield that~$g_{m+1}(\pi\tau_{1,m+1},\Sigma)=g_{m+1}(\pi,\Sigma)$, and~$\pi\tau_{1,m+1} \eqlaw \pi$ to the second expectation, where $\eqlaw$ denotes equality in distribution. Hence,
\besn{\label{61}
	R_{g,2} &\leq \frac{4\mu_{n,m}^2m}{n^2}\IE\bclr{g_m(\pi,\Sigma)-g_{m+1}(\pi,\Sigma)}^2\\
  & = \frac{4\mu_{n,m}^2m}{n^2} \IE\bbbclr{\,\sum_{v\in[n]}\bclr{B_v(m,\pi,\sigma_v)-B_v(m+1,\pi,\sigma_v)}}^2.
}
Now, recalling that $L^v(m,\pi,\sigma_v)$ is the set of indices of edges
to which those edges adjacent to vertex $v$ were relocated, let
\besn{\label{62}
	N^v(m,\pi,\sigma_v) & = \bigcup_{i \in L^v(m,\pi,\sigma_v)} {\cal E}_n(i)\\
  & = \{w\in [n]\colon\text{$\exists i\in L^v(m,\pi,\sigma_v)$ such that $w\in\cE_{n}(i)$}\},
}
the set of vertices that received at least one additional edge when redistributing those edges. Also, let
\ben{\label{63}
	M^v(m,\pi,\sigma_v) = \{w\in[n]\setminus v\,:\, \{w,v\}\in\cG(m,\pi),w\not\in N^v(m,\pi,\sigma_v)\},
}
the neighbours of $v$ that did not receive a new edge when redistributing the edges incident on $v$.

Note that the chosen vertex $v$ will increase the difference $Y(m,\pi)-Y^v(m,\pi,\sigma_v)$ by one if it is isolated in ${\mathcal G}(m,\pi)$. A vertex $w \not = v$, will have this same effect if $w$ is isolated in ${\mathcal G}(m,\pi)$ but then has an edge attached to it in the redistribution of the removed edges of $v$. On the other hand, a vertex $w \not = v$ will decrease this difference by one when $w$ is connected to $v$, and has degree 1 in ${\mathcal G}(m,\pi)$, and does not have such an edge reattached. Hence, this difference is given by 
\ben{\label{64}
	B_v(m,\pi,\sigma_v) 
	= I_v(m,\pi) 
	+ \sum_{\substack{w\in\\ N^v(m,\pi,\sigma_v)}}I_w(m,\pi)
	    - \sum_{\substack{w\in\\ M^v(m,\pi,\sigma_v)}} I_{w,1}(m,\pi),
}
where~$I_{w,1}(m,\pi) = \I[d_w(m,\pi)=1]$.
Letting $\triangle$ denote set difference, we obtain
\besn{ \label{65}
	&\abs{B_v(m,\pi,\sigma_v)-B_v(m+1,\pi,\sigma_v)} \\[2ex]
	&\quad\leq \abs{I_v(m,\pi)-I_v(m+1,\pi)}\\
	&\qquad+ \bbbabs{\sum_{\substack{w\in\\ N^v(m,\pi,\sigma_v)}}I_w(m,\pi)-\sum_{\substack{w\in\\ N^v(m+1,\pi,\sigma_v)}}I_w(m+1,\pi)}\\
	&\qquad + \bbbabs{\sum_{\substack{w\in\\ M^v(m,\pi,\sigma_v)} }I_{w,1}(m,\pi)-\sum_{\substack{w\in\\ M^v(m+1,\pi,\sigma_v)} }I_{w,1}(m+1,\pi)}\\[2ex]
	&\quad\leq \I[v \in {\cal E}_n(\pi(m+1))] \\
	&\qquad + |N^v(m,\pi,\sigma_v) \cap {\mathcal E}_n(\pi(m+1))|
	\\
	&\qquad\qquad\qquad\qquad +\sum_{\substack{w\in\\ N^v(m,\pi,\sigma_v)\triangle N^v(m+1,\pi,\sigma_v)}} I_w(m+1,\pi)\\
	&\qquad+\sum_{\substack{w\in\\ M^v(m,\pi,\sigma_v)} }\abs{I_{w,1}(m,\pi)-I_{w,1}(m+1,\pi)}\\
	&\qquad\qquad\qquad\qquad+\sum_{\substack{w\in\\ M^v(m,\pi,\sigma_v)\triangle M^v(m+1,\pi,\sigma_v)} } I_{w,1}(m+1,\pi).
}
For the first term in \eqref{65}, we have used that for any vertex~$w \in [n]$ we can only have~$I_w(m,\pi)\not =I_w(m+1,\pi)$ when~$w$ is an endpoint of the additional edge determined by~$\pi(m+1)$, that is, when~$w\in\cE_n(\pi(m+1))$. For the second term in \eqref{65} we have used similarly that
\be{ 
	\sum_{\substack{w\in\\ N^v(m,\pi,\sigma_v)}}\abs{I_w(m,\pi)-I_w(m+1,\pi)}
	\leq |N^v(m,\pi,\sigma_v) \cap {\mathcal E}_n(\pi(m+1))|.
}
Moving now to the third term in \eqref{65}, if~$v\notin\cE_n(\pi(m+1))$ and~$\pi(m+1)\not\in L^v(m,\pi,\sigma_v)$, then~$L^v(m+1,\pi,\sigma_v)= L^v(m,\pi,\sigma_v)$; indeed, if $v\notin\cE_n(\pi(m+1))$, vertex $v$ has the same degree in both $\cG(m,\pi)$ and $\cG(m+1,\pi)$, and if also  $\pi(m+1)\not\in L^v(m,\pi,\sigma_v)$, then Algorithm 1 will redistribute the edges adjacent to $v$ to the same available pairs of vertices when $v$ has degree $m$ or $m+1$; indeed, note that between the two cases $m$ and $m+1$, Step~7 changes only if $\sigma_v(i)=\pi(m+1)$ for any of the $i$ tested there, which is equivalent to $\pi(m+1)\in L^v(m,\pi,\sigma_v)$). Therefore, if $L^v(m,\pi,\sigma_v)\neq L^v(m+1,\pi,\sigma_v)$, we must either have~$v\in{}\cE_n(\pi(m+1))$ or $\pi(m+1)\in L^v(m,\pi,\sigma_v)$. Now, if $v \in{}\cE_n(\pi(m+1))$, then the degree of $v$ in $\cG(m+1,\pi)$ is one more than its degree in $\cG(m,\pi)$, so~$L^v(m+1,\pi,\sigma_v)$ will contain one more edge than~$L^v(m,\pi,\sigma_v)$. And if $\pi(m+1)\in L^v(m,\pi,\sigma_v)$, then~$\abs{L^v(m,\pi,\sigma_v)\triangle L^v(m+1,\pi,\sigma_v)} = 2$ since~$\pi(m+1)$ will be found blocked when forming ${\cal G}^v(m+1,\pi,\sigma_v)$ and a new non-edge has to be found. Hence,
\bes{
	&\sum_{\substack{w\in\\ N^v(m+1,\pi,\sigma_v)\triangle N^v(m,\pi,\sigma_v)}} I_w(m+1,\pi) \\
  &\qquad\qquad\qquad\leq \abs{N^v(m+1,\pi,\sigma_v)\triangle N^v(m,\pi,\sigma_v)}\\
	&\qquad\qquad\qquad\leq 2 \I[v \in {\cal E}_n(\pi(m+1))]+4\I[\pi(m+1)\in L^v(m,\pi,\sigma_v)].
}
For the fourth term in \eqref{65} we apply the bound
\bes{
  &\sum_{\substack{w\in\\ M^v(m,\pi,\sigma_v)} }\abs{I_{w,1}(m,\pi)-I_{w,1}(m+1,\pi)} \\
  &\qquad\leq\sum_{\substack{w:\\\{w,v\}\in\cG(m,\pi)} }\abs{I_{w,1}(m,\pi)-I_{w,1}(m+1,\pi)}.
}
Finally, for the last term, similarly as for the third, if both~$v\notin\cE_n(\pi(m+1))$ and~$\pi(m+1)\not\in L^v(m,\pi,\sigma_v)$, it is easy to see that~$M^v(m+1,\pi,\sigma_v)= M^v(m,\pi,\sigma_v)$; indeed, under these conditions, the set of vertices adjacent to~$v$ does not change with the addition of edge $m+1$, and moreover, $L^v(m+1,\pi,\sigma_v)=L^v(m,\pi,\sigma_v)$, which implies $N^v(m+1,\pi,\sigma_v)=N^v(m,\pi,\sigma_v)$, so that $M^v(m+1,\pi,\sigma_v)= M^v(m,\pi,\sigma_v)$. Hence, if $M^v(m,\pi,\sigma_v)\neq M^v(m+1,\pi,\sigma_v)$, we must either have $v\in\cE_n(\pi(m+1))$ or $\pi(m+1)\in L^v(m,\pi,\sigma_v)$. 

If~$v \in\cE_n(\pi(m+1))$, then $v$ has one more neighbour in ${\mathcal G}(m+1,\pi)$ than in ${\mathcal G}(m,\pi)$, and so $L^v(m+1,\pi,\sigma_v)$ will contain one more edge than~$L^v(m,\pi,\sigma_v)$. In this case, $M^v(m,\pi,\sigma_v)$ and $M^v(m+1,\pi,\sigma_v)$ can differ by at most three elements. Indeed, they may only differ by the additional neighbour in ${\mathcal G}(m+1,\pi)$, and by at most two existing neighbours of $v$ in ${\mathcal G}(m,\pi)$ which were not assigned an edge in $L^v(m,\pi,\sigma_v)$, but were so assigned in $L^v(m+1,\pi,\sigma_v)$. 

If~$\pi(m+1)\in L^v(m,\pi,\sigma_v)$, then~$\abs{L^v(m,\pi,\sigma_v)\triangle L^v(m+1,\pi,\sigma_v)} = 2$, so that $M^v(m,\pi,\sigma_v)$ and $M^v(m+1,\pi,\sigma_v)$ can differ by at most four elements; hence
\bes{
  &\sum_{\substack{w\in\\ M^v(m,\pi,\sigma_v)\triangle M^v(m+1,\pi,\sigma_v)} } I_{w,1}(m+1,\pi) \\
  &\qquad\leq \abs{M^v(m,\pi,\sigma_v)\triangle M^v(m+1,\pi,\sigma_v)} \\
  &\qquad\leq 3\I[v \in {\cal E}_n(\pi(m+1))]+4\I[\pi(m+1)\in L^v(m,\pi,\sigma_v)].
}
Now recalling \eqref{61}, summing \eqref{65} over $v\in[n]$ and noting that
\be{
  \sum_{v\in[n]}\I[v\in{\cal E}_n(\pi(m+1))]\leq 2,
} 
we obtain
\besn{\label{66}
	R_{g,2} \leq 
	\frac{C\mu_{n,m}^2m}{n^2}\IE\bclc{1+ R_{g,2,1}^2 + R_{g,2,2}^2 + R_{g,2,3}^2},
}
where
\ba{
	R_{g,2,1} &= \sum_{v\in[n]}|N^v(m,\pi,\sigma_v) \cap {\mathcal E}_n(\pi(m+1))|\\
	 R_{g,2,2}&= \sum_{v\in[n]} \I[\pi(m+1)\in L^v(m,\pi,\sigma_v)]
 \\
 R_{g,2,3} &=\sum_{v\in[n]}\sum_{\substack{w:\{w,v\}\in\cG(m,\pi)} }\abs{I_{w,1}(m,\pi)-I_{w,1}(m+1,\pi)}.
}
For the first term, 
\besn{\label{67}
	\IE R_{g,2,1}^2 &= n\IE|N^1(m,\pi,\sigma_1) \cap {\mathcal E}_n(\pi(m+1))|^2\\
	&\quad+ n(n-1)\IE \{|N^1(m,\pi,\sigma_1) \cap\cE_n(\pi(m+1))|\\ &\kern10em\times|N^2(m,\pi,\sigma_2)\cap\cE_n(\pi(m+1))|\}.
}
Note that each vertex has at most $n-1$ potential edges available where the new edge $\pi(m+1)$ can be placed. Hence, since $N^1(m,\pi,\sigma_1)\leq 2d_1(m,\pi)$, there are at most $2d_1(m,\pi)(n-1)$ potential edges with one end in $N^1(m,\pi,\sigma_1)$, and so
\be{
	\IP[N^1(m,\pi,\sigma_1) \cap {\mathcal E}_n(\pi(m+1))\neq\emptyset\given d_1(m,\pi),N^1(m,\pi,\sigma_1)] 
	\leq\frac{2d_1(m,\pi)(n-1)}{N-m}.
}

Noting that $\abs{N^1(m,\pi,\sigma_v) \cap \cE_n(\pi(m+1))}$ is bounded by $2\I[N^1(m,\pi,\sigma_v) \cap \cE_n(\pi(m+1))\neq \emptyset]$, recalling that $d_1(m,\pi)\sim\Hyp(N,m,n-1)$ and using Lemma~\ref{lem2}, and also \eqref{47} of Condition~\eqref{6}, which gives that $m \le n^{3/2}$ as $\overline{c} \le 1$ by \eqref{46}, we therefore have
\be{
	n\IE|N^1(m,\pi,\sigma_v) \cap {\mathcal E}_n(\pi(m+1))|^2
	\leq C\bbclr{1+\frac{m}{n}}.
}
Moreover, with $\IP_{12}[\cdot] = \IP[\cdot\given d_1(m,\pi),d_2(m,\pi),N^1(m,\pi,\sigma_1),N^2(m,\pi,\sigma_2)]$, 
\bes{
  & \IP_{12}[N^1(m,\pi,\sigma_1) \cap\cE_n(\pi(m+1))\neq\emptyset,N^2(m,\pi,\sigma_2) \cap\cE_n(\pi(m+1))\neq\emptyset] \\
&\qquad \leq\frac{4d_1(m,\pi)d_2(m,\pi)}{N-m},
}
since there are at most $2d_1(m,\pi)\times 2d_2(m,\pi)$ potential edges with one end in $N^1(m,\pi,\sigma_1)$ and the other end in $N^2(m,\pi,\sigma_2)$.
Hence, again using $m \le n^{3/2}$ and Lemma~\ref{lem2}, and also Cauchy-Schwarz, we obtain
\bes{
 	&n(n-1)\IE \{|N^1(m,\pi,\sigma_1) \cap\cE_n(\pi(m+1))||N^2(m,\pi,\sigma_2)\cap\cE_n(\pi(m+1))|\}\\
  &\qquad \leq C\bbclr{1+\frac{m^2}{n^2}},
}
so that \eqref{67} results in the bound
\bea \label{68}
\IE R_{g,2,1}^2 \leq  C\bbclr{1+\frac{m^2}{n^2}}.
\ena
Next, we have
\besn{
	\IE R_{g,2,2}^2
	&= n\IP[\pi(m+1)\in L^1(m,\pi,\sigma_1)] \\
	&\quad+n(n-1)\IP[\pi(m+1)\in L^1(m,\pi,\sigma_1)\cap L^2(m,\pi,\sigma_2)].\label{69}
}
To calculate the first probability, we condition on~$\pi$ and average over~$\sigma_1$. If~$1 \in {\cal E}_n(\pi(m+1))$, then the conditional probability vanishes, as no edge incident on the (removed) vertex $v$ gets redistributed. Hence, take $\pi$ such that~$1 \not \in {\cal E}_n(\pi(m+1))$. To compute~$\IP[\pi(m+1)\in L^1(m,\pi,\sigma_1)|\pi]$, note that there are~$N-m$ non-edges of ${\mathcal G}(m,\pi)$, out of which~$n-1-d_1(m,\pi)$ involve vertex~$1$ and can therefore not be used during the redistribution of the~$d_1(m,\pi)$ edges incident to vertex~$1$, which is to be removed. This leaves~$N-m-n+1+d_1(m,\pi)$ potential edges from which to draw our sample of~$d_1(m,\pi)$ non-edges. By uniformity, the probability that~$\pi(m+1)$ is in this sample is given by
\bea \nonumber
p(\pi):={}&\IP[\pi(m+1)\in L^1(m,\pi,\sigma_1)|\pi] \\
={}&  \frac{d_1(m,\pi)}{N-m-n+1+d_1(m,\pi)} 
\leq \frac{d_1(m,\pi)}{N-m-n} \le\frac{Cd_1(m,\pi)}{N} ,\label{70}
\ena 
as we only ask for the probability that one special object is included in a simple random sample of~$d_1(m,\pi)$ objects from a population of size~$N-m-n+1+d_1(m,\pi)$, and where in the final inequality we have used \eqref{47} of Condition~\eqref{6}.
Averaging over~$\pi$, for the first term in \eqref{69} we obtain the bound
\ben{\label{71}
	n\IE p(\pi) \leq \frac{Cn\IE d_1(m,\pi)}{N} = \frac{ Cn(n-1)m}{N^2} \le \frac{Cm}{n^2}.
}
Next, as the events $\pi(m+1) \in L^1(m,\pi,\sigma_1)$ and $\pi(m+1) \in L^2(m,\pi,\sigma_2)$ are conditionally independent given $\pi$, we may handle the second, off diagonal term of \eqref{69} by using Lemma \ref{lem2} to give that
\bea \label{72}
\IE H^2 \le C\left(1+\frac{m^2}{n^2} \right)  \qmq{when} H \sim \Hyp(N,m,n),
\ena
which, recalling \eqref{70}, results in the bound
\bes{ 
  &n^2 \IE \bclc{\IP\cls{\pi(m+1) \in L^1(m,\pi,\sigma_1)\given\pi} \IP\cls{\pi(m+1) \in L^1(m,\pi,\sigma_2)\given\pi}}\\
&\quad = n^2 \IE p(\pi)^2\le \frac{Cn^2 \IE d_1(m,\pi)^2}{N^2} \le \frac{Cn^2}{N^2} \left(1+\frac{m^2}{n^2} \right).
}
Thus, using \eqref{69}, \eqref{71} and the inequality directly above, we obtain
\bea \label{73} 
\IE R_{g,2,2}^2  \le  C\bbclr{1+\frac{m^2}{n^2}}.
\ena

Finally, in order to bound $R_{g,2,3}$, note that the double sum is simply twice the sum over all the vertices of edges in ${\mathcal G}(m,\pi)$. Note also that, as $w$ must have degree at least one to be included in the sum, $I_{w,1}(m,\pi)\neq I_{w,1}(m+1,\pi)$ only if $w$ has degree 1 in $\cG(n,m)$ and it receives the additional edge $\pi(m+1)$. Thus, since the additional edge has two endpoints, it is immediate that $R_{g,2,3}$ can be no more than $4$, so that
\bea \label{74}
\IE R_{g,2,3}^2 \leq 16.
\ena
Recalling \eqref{66} and applying \eqref{68},  \eqref{73} and \eqref{74} yields
\be{
	 R_{g,2} \leq \frac{C\mu_{n,m}^2m}{n^2}\left( 1+ \frac{m^2}{n^2} \right).
}
Now, by Lemma \ref{lem6}, we have $\mu_{n,m}/n\leq \exp(-2m/n)$, and since $x\exp(-2x)$ remains bounded on the positive real numbers, it follows that $m\mu_{n,m}/n^2$ is bounded; hence,
\ben{ \label{75}
	 R_{g,2} \leq C\mu_{n,m} \left( 1+ \frac{m^2}{n^2} \right).
}

\paragraph{Bounding $\boldsymbol{R_{f,2}}$.} Using the same arguments as those used for $R_{g,2}$ to reach \eqref{61}, we can show that
\be{
  R_{f,2}\leq Cm\IE\bbbclr{\,\sum_{v\in[n]}\bclr{I_v(m,\pi)B_v(m,\pi,\sigma_v)-I_v(m+1,\pi)B_v(m+1,\pi,\sigma_v)}}^2
}
Adding and subtracting $I_v(m+1,\pi)B_v(m,\pi,\sigma_v)$, and splitting the sum, we obtain
\be{
  R_{f,2} \leq Cm\IE\bclc{R_{f,2,1}^2+R_{f,2,2}^2},
}
where
\ba{  
  R_{f,2,1}&=\sum_{v\in[n]}\bclr{I_v(m,\pi)-I_v(m+1,\pi)}B_v(m,\pi,\sigma_v),\\
  R_{f,2,2}&=\sum_{v\in[n]}I_v(m+1,\pi)\bclr{B_v(m,\pi,\sigma_v)-B_v(m+1,\pi,\sigma_v)}.
}

In order to bound $R_{f,2,1}$, note first that $I_v(m,\pi)-I_v(m+1,\pi)$ is non-zero, and in that case equals one, exactly when vertex $v$ is isolated in $\cG(m,\pi)$ and the $(m+1)^{\mathrm{th}}$ added edge is incident on $v$; that is, 
\be{
  I_v(m,\pi)-I_v(m+1,\pi) = I_v(m,\pi)\I[v\in\cE_n(\pi(m+1))].
} 
And since $I_v(m,\pi)=1$ implies $B_v(m,\pi,\sigma_v)=1$, we have
\be{
  R_{f,2,1} = \sum_{v\in[n]} I_v(m,\pi)\I[v\in\cE_n(\pi(m+1))].
}
Squaring, taking expectation and using exchangeability, we obtain 
\bes{
  & \IE R_{f,2,1}^2\\
  &\quad = n\IE\bclc{I_1(m,\pi)\I[1\in\cE_n(\pi(m+1))]}  \\
  &\quad\quad + n(n-1)\IE\bclc{I_1(m,\pi)I_{2}(m,\pi)\I[1\in\cE_n(\pi(m+1))]
    \I[2\in\cE_n(\pi(m+1))]}\\
  & \qquad =: R_{f,2,1,1}+R_{f,2,1,2}  .
}
For the first term, we have
\bes{
  R_{f,2,1,1} &= \mu_{n,m}\IP[1\in\cE_n(\pi(m+1))\given d_1(m,\pi)=0]
  = \frac{\mu_{n,m}(n-1)}{N-m} \leq \frac{C\mu_{n,m}}{n},
}
while for the second term
\bes{
  R_{f,2,1,2} &= n(n-1)\IP[d_1(m,\pi)=d_2(m,\pi)=0]\\
  &\quad\times\IP[\cE_n(\pi(m+1))=\{1,2\}\given d_1(m,\pi)=d_2(m,\pi)=0] \\
  & = \Hyp(N,m,2n-3)(\{0\})\frac{n(n-1)}{N-m}
  \leq \frac{C\mu_{n,m}}{n},
}
where we have used that $\Hyp(N,m,n)(\{0\})$, the probability that a hypergeometric variable with the given parameters takes the value 0, is a decreasing function of the number of special items $n$. Hence, 
\be{
  \IE R_{f,2,1}^2\leq \frac{C\mu_{n,m}}{n}
}
In order to handle $R_{f,2,2}$, note that if $I_v(m+1,\pi)=1$, we necessarily have $I_v(m,\pi)=1$, so that 
$B_v(m,\pi,\sigma_v)=B_v(m+1,\pi,\sigma_v)=1$ whenever $I_v(m+1,\pi)=1$; it follows that 
\be{
R_{f,2,2}=0.
}
Therefore,
\ben{\label{76}
  R_{f,2}\leq C\mu_{n,m}\frac{m}{n} \le C\mu_{n,m}  \left( 1+  \frac{m^2}{n^2}\right).
}
Combining the bounds \eqref{58}, \eqref{59}, \eqref{75} and \eqref{76} as in \eqref{55}, and then recalling \eqref{53}, we obtain
\be{
  \IE\babs{\IE(1-GD|W)} 
  \le \frac{C\sqrt{\mu_{n,m}}}{\sigma^2_{n,m}}\sqrt{1+\frac{m^2}{n^2}}.
}
Recalling \eqref{36} and noting that $\sigma_{n,m}^2\leq \mu_{n,m}$ by Lemma~\ref{lem5}, the first condition in \eqref{10} holds, as
\bes{
	\sup_{(n,m)\in\Goodset} r_{n,m}\,	\IE\babs{\IE(1-GD|W)}  < \infty.
}

Next, it clearly suffices to verify the second condition in \eqref{10} of \eqref{9} with $D$ replaced by its absolute upper bound \begin{align}\label{77}
\overline{D}=\frac{1+2d_\Vee}{\sigma_{n,m}}
\end{align}
obtained in \eqref{57}, and splitting the resulting expression to be bounded into two terms, we have
\begin{align}\label{78}
\IE\bclc{(1+\abs{W})\abs{G}\overline{D}^2} = \IE\bclc{\abs{G}\overline{D}^2}+\IE\bclc{\abs{W}\abs{G}\overline{D}^2}.
\end{align}
Now, let $a\geq 1$. Using the given form \eqref{51} of $G$,
we obtain
\besn{ \label{79}
  \IE\bclc{\abs{G}\overline{D}^a} 
  & = \frac{n}{\sigma_{n,m}} \IE\bclc{|I_\Vee-\frac{\mu_{n,m}}{n}|\overline{D}^a}\\
  & \leq \frac{n}{\sigma_{n,m}} \IE\bclc{I_\Vee\overline{D}^a} 
  + \frac{\mu_{n,m}}{\sigma_{n,m}} \IE \overline{D}^a 
  \le \frac{C\mu_{n,m}}{\sigma_{n,m}^{1+a}}   \left(1 + \left( \frac{m}{n} \right)^a \right),
}
where, for the final inequality, we used that $\overline{D}=1/\sigma_{n,n}$ when  $I_\Vee=1$ and that $\IE I_\Vee=\mu_{n,m}/n$ on the first summand,  and Lemma \ref{lem2} on the second summand. Setting $a=2$ we obtain the bound 
\begin{align} \label{80}
\IE\bclc{|G|\overline{D}^2} \le \frac{C \mu_{n,m}}{\sigma_{n,m}^3}\left( 1+\left(\frac{m}{n}\right)^2  \right)
\end{align}
on the first term of \eqref{78}.

The second term in \eqref{78} likewise leads to two terms, corresponding to the two in the second line of \eqref{79}, but with an additional factor of $|W|$. Now setting $a=2$, for the first we have, by applying Cauchy-Schwarz,
\ben{\label{81}
  \frac{n}{\sigma_{n,m}}\IE\bclc{\abs{W}I_\Vee\overline{D}^2}
  \leq \frac{\mu_{n,m}}{\sigma^{3}_{n,m}}\IE\bclc{\abs{W}\given I_\Vee=1}
  \leq \frac{\mu_{n,m}}{\sigma^{3}_{n,m}}\sqrt{\IE\bclc{W^2\given I_\Vee=1} }.
}
Conditional on vertex $\Vee$ being isolated, the distribution of the number of isolated vertices in the $\ERRG(n,m)$ model is one more than the number of isolated vertices in the $\ERRG(n-1,m)$ model. Hence, writing
\be{
W=\frac{\sigma_{n-1,m}}{\sigma_{n,m}}\times \frac{Y-\mu_{n-1,m}}{\sigma_{n-1,m}} + \frac{\mu_{n-1,m}-\mu_{n,m}}{\sigma_{n,m}},
}
and using $(x+y)^2 \le 2(x^2+y^2)$ twice, we obtain
\bes{
&\IE_{n,m} \bclc{W^2\given I_\Vee=1}  \\
&\qquad\le 2 \left( \frac{\sigma_{n-1,m}}{\sigma_{n,m}} \right)^2
\IE_{n-1,m}
\left(\frac{Y-\mu_{n-1,m}+1}{\sigma_{n-1,m}} \right)^2 + 
2\left(\frac{\mu_{n-1,m}-\mu_{n,m}}{\sigma_{n,m}}\right)^2\\
&\qquad\le  \frac{4\sigma_{n-1,m}^2}{\sigma_{n,m}^2} 
+ \frac{4+2(\mu_{n-1,m}-\mu_{n,m})^2}{\sigma_{n,m}^2}.
}

Lemma \ref{lem9} yields that the first term is bounded by a constant. 
For the second term, 
by removing all edges from the $n^{\mathrm{th}}$ vertex and relocating them among the remaining vertices, we have a coupling of  $\ERRG(n,m)$ and  $\ERRG(n-1,m)$ which yields $|Y_{n-1,m}-Y_{n,m}| \le 1+2d_n$, so that
\begin{align*}
|\mu_{n-1,m}-\mu_{n,m}| \le 1+2\IE d_n \le 1 + \frac{2mn}{N} \leq C\left(1+\frac{m}{n}\right).
\end{align*}
Using that $r_{n,m}$ in \eqref{36} is lower bounded by $\overline{r}$, which is at least 1 by Lemma \ref{lem8}, and that $\mu_{n,m} \ge \sigma_{n,m}^2$ by Lemma \ref{lem5} yields
 $\sigma_{n,m}\geq (1+(m/n)^2)$, and using also \eqref{81}, we conclude that
\begin{align} \label{82}
\IE [W^2|I_\Vee=1] \le C \quad \mbox{and hence} \quad \frac{n}{\sigma_{n,m}}\IE\bclc{\abs{W}I_\Vee\overline{D}^2} \le \frac{\mu_{n,m}}{\sigma_{n,m}^3}.
\end{align}

For the corresponding second term of \eqref{79}, with $a=2$ and the additional factor of $|W|$, using Cauchy-Schwarz and $\IE W^2 =1$, 
\begin{align} \label{83}
\frac{\mu_{n,m}}{\sigma_{n,m}}  
\IE \left[|W|D^2\right]  \le \frac{\mu_{n,m}}{\sigma_{n,m}}   \sqrt{\IE \left[D^4\right]} \le \frac{C\mu_{n,m}}{\sigma_{n,m}^3} \left(1 + \left( \frac{m}{n} \right)^2 \right),
\end{align}
applying Lemma \ref{lem2}. Combining with \eqref{80} and \eqref{82} we see the sum is of the order of \eqref{83}
and it follows that
\be{
	\sup_{(n,m)\in\Goodset} r_{n,m}\,\IE_{n,m}\bclc{(1+\abs{W})\abs{G}D^2} < \infty.
}

\item[Condition~(\ref{11}).] Let~$(n,m)\in\Goodset$, and define
\ben{ \label{def1}
	\cF_{n,m} = \sigma\bclr{
		\Vee, d_\Vee(m,\pi)},
}
the~$\sigma$-algebra generated by the identity of the vertex chosen to be removed in the coupling and its degree.
Letting $\overline{G}=|G|$, and $\overline{D}$ be as in \eqref{77}, we see that both are clearly $\cF_{n,m}$-measurable.

For the first condition in \eqref{12}, let
\bea \label{84}
F_{n,m,1} =\{d_\Vee(m,\pi) \le t(n,m) \} \qmq{where} t(n,m)=\min\{n,m\}/4.
\ena
Recall \eqref{46} and \eqref{47}; in particular, on $\Goodset$, we have $n \ge 344$ and $28 \le m \leq n^{3/2}$. It is straightforward to check that under these conditions,
\bea \label{85}
\underline{t}(n,m):= \frac{4m}{n}+2\log(m\wedge n)  \leq t(n,m)
\quad\text{for all $(n,m)\in\Goodset$.}
\ena
Indeed, if for $m \le n$, the bound follows using that $2 \log m \le (1/4-4/344)m$ for $m \ge 28$, while for $n \le m$ one verifies, for $n \ge 344$, that $4\sqrt{n}+2\log n\leq n/4$.

Now, bounding~$D$ by~$\overline{D}$ as given in \eqref{77}, writing $F$ as short for $F_{n,m,1}$ and using that $\IP[I_\Vee=0]=1-\mu_{n,m}/n$ in the final inequality, we obtain
\bes{
  \IE \bclc{ |G|D^2(1-I_F)} 
  &\le\frac{n}{\sigma_{n,m}} \IE \bclc{ \abs{I_\Vee-{\mu_{n,m}}/{n}}\overline{D}^2(1-I_F)} \\
  &\le\frac{n}{\sigma_{n,m}} \IE \bclc{ I_\Vee\overline{D}^2(1-I_F)} 
  +\frac{\mu_{n,m}}{\sigma_{n,m}} \IE \bclc{\overline{D}^2(1-I_F)}.
}
Since $\Vee$ cannot be both isolated and have positive degree, we have $I_\Vee (1-I_F) = 0$ almsot surely, and so the first term is zero. Applying Cauchy-Schwarz to the second term and then invoking Lemma \ref{lem2}, 
\begin{align}
  \IE \bclc{ |G|D^2(1-I_F)}
  &\leq \frac{\mu_{n,m}}{\sigma_{n,m}}\bclr{\IE \overline{D}^4 \IE(1-I_F)}^{1/2} \nonumber \\
  &\leq \frac{C\mu_{n,m}\bclr{1+\bclr{\frac{m}{n}}^2}}{\sigma_{n,m}^3}\IP[d_\Vee(m,\pi)> t(n,m)]^{1/2}.\label{86}
\end{align}
By Lemma~\ref{lem2} with~$\gamma =2m/n$ being the mean of~$d_1(m,\pi)$, we have for any~$t>\gamma~$ that 
\bes{
	\IP[d_\Vee(m,\pi)> t] 
	& \leq \IP[d_\Vee(m,\pi)> \gamma  + (t-\gamma )]\\
	& \leq \exp\bbbclr{-\frac{(t-\gamma )^2}{t+\gamma }}
	\leq \exp\bbclr{-\frac{t-2\gamma }{2}};
}
trivially, the final expression upper bounds the left hand side for~$t \le \gamma~$ as well and hence holds for all~$t \ge 0$. Hence, with  $\underline{t}(n,m)$ as in \eqref{85}, by \eqref{86} and recalling $r_{n,m}$ in \eqref{36}, we obtain
\besn{ \label{87}
	& r_{n,m}^2\IE \{ |G| D^2 (1-I_F)\}
	  \le \frac{ C\sigma_{n,m}^3}{\mu_{n,m}(1+(\frac{m}{n})^2)} \exp\bbbclr{-\frac{\underline{t}(n,m)-2\gamma }{4}} 
	 \\ &\qquad \le \frac{C(m\wedge n)^{1/2}}{1+(\frac{m}{n})^2}\exp\bbbclr{-\frac{\underline{t}(n,m)-2\gamma }{4}} \\
	&\qquad =   \frac{C(m\wedge n)^{1/2}}{1+\bclr{\frac{m}{n}}^2} \exp\bbbclr{-\frac{1}{2}\log (m \wedge n)} 
	= \frac{C}{1+\bclr{\frac{m}{n}}^2} \le C,
}
where we have used 
that~$\sigma^2_{n,m}\leq \min\{\mu_{n,m},2m\}$ via Lemma~\ref{lem5}, and trivially $\mu_{n,m} \le n$,  for the second inequality, thus showing the first condition in \eqref{12} is satisfied. 

From \eqref{79} with $a=2$ it follows that
\be{
	\sup_{(n,m)\in\Goodset} r_{n,m}\,\IE_{n,m}\bclc{\overline{G}\overline{D}^2} < \infty,
}
thus showing that the second condition in \eqref{12} is also satisfied.

\item[Condition~(\ref{13}).]\def\rec{{\mathrm{emb}}}
Denote by~${\mathcal G}^{{\rm emb},\Vee}$ the ``embedded'' graph obtained by removing vertex~$\Vee$ and all its incident edges; we keep the original vertex labeling. 
As the remaining $m-d_\Vee(m,\pi)$ edges are uniformly distributed over the remaining~$n-1$ vertices, conditional on~${\cal F}_{n,m}$ in \eqref{def1}, the resulting graph has conditional distribution
\ben{\label{88}
	\law({\mathcal G}^{{\rm emb},\Vee}| \cF_{n,m}) \sim \ERRG(n-1,m-d_\Vee(m,\pi))
}
almost surely; this identity is again to be understood up to labeling. In particular, letting
$d^{{\rm emb},\Vee}_w$ be the degree of vertex~$w$ in graph~${\mathcal G}^{{\rm emb},\Vee}$, 
\be{
	V = \sum_{w: w \not = \Vee} \I[d^{{\rm emb},\Vee}_w=0]
}
is the number of isolated vertices of~${\mathcal G}^{{\rm emb},\Vee}$, and \eqref{88} implies
\be{
	\law(V|\cF_{n,m}) = \law_{\Psi}(Y) \qmq{where} \Psi=(n-1,m-d_\Vee(m,\pi)).
}
Clearly~$\Psi$ is~$\cF_{n,m}$-measurable. Now set $F_{n,m,2}=F_{n,m,1}$ as in \eqref{84}, which is also clearly  $\cF_{n,m}$ measurable. Condition \eqref{15} is 
clearly equivalent to the first condition in \eqref{12}, which was verified in \eqref{87}.

\item[Condition~(\ref{16}).]
Let 
\be{
	\overline{B} = \frac{d_\Vee(m,\pi)+1}{\sigma_{n,m}},
}
which is clearly~$\cF_{n,m}$-measurable. Moreover,~$\sigma_{n,m}^{-1}\abs{Y-V}\leq \overline{B}$ since removing any edge connected to vertex~$\Vee$ can make at most one vertex, other than~$\Vee$, isolated; the additional term of one accounts for the case when vertex~$\Vee$  is isolated. 
Since~$\overline{B} \le \overline{D}$, as given in \eqref{77}, by setting $a=3$ in \eqref{79} we obtain
\bes{ 
  r_{n,m}^2\IE\bclc{\overline{G}\overline{D}^2\overline{B}I_{F_{n,m,2}}}  
  & \le 	r_{n,m}^2 \IE\bclc{\overline{G}\overline{D}^2\overline{B}}
	\le r_{n,m}^2 \IE\bclc{\overline{G}\overline{D}^3} \\
	& \le  r_{n,m}^2 \frac{C\mu_{n,m}(1+(\frac{m}{n})^3)}{\sigma_{n,m}^4} = \frac{C\sigma_{n,m}^2}{\mu_{n,m}(1+\frac{m}{n})}.
}
As $\sigma_{n,m}^2 \le \mu_{n,m}$ via Lemma \ref{lem5}, the second bound in \eqref{17} holds.

\item[Condition~(\ref{18}).] We verify the stronger conditions that \eqref{19} and the second bound of \eqref{20} hold when taking the larger supremum obtained when removing the intersection with  $\{\Psi\in\Goodset\}$. This stronger version 
of \eqref{19} is an immediate consequence of  Lemma \ref{lem9}. As this same lemma shows that the ratios in \eqref{20} involving means and variances are bounded by a constant, it is only required to bound the ratios of the remaining factor.  
For~$r_{n,m}/r_{n-1,m-d}$, we have 
\beas
\frac{(1+((m-d)/(n-1))^2}{(1+(m/n)^2)} \le  \frac{1+2(m/n)^2}{1+(m/n)^2} \le 2, 
\enas
and for the reciprocal, using that~$m/n \le 2(m-d)/(n-1)$ for~$d \le m/4$, 
\beas
\frac{(1+(m/n)^2)}{(1+((m-d)/(n-1))^2} \le 4.
\enas
			
\end{description}
Conditions~\eqref{6}--\eqref{18} have been verified, and Theorem \ref{thm3} now follows from Theorem~\ref{thm1}.
\end{proof}

\subsection{Technical results}
 \begin{lemma}[Efron-Stein-type variance bound]\label{lem1} Let~$\pi$ and the components of~$\Sigma=(\sigma_1,\dots,\sigma_n)$ be independent uniform random permutations of~$[N]$, and let~$h(\pi,\Sigma)$ be a real-valued function. Let~$\tau_1,\dots,\tau_{N-1}$ be random transpositions independent of each other and of~$(\pi,\Sigma)$, where~$\tau_j$ transposes~$j$ and a uniformly chosen integer in the set~$\{j,\ldots,N\}$. Let~$\Sigma'=(\sigma_1',\ldots,\sigma_n')$ be an independent copy of~$\Sigma$ and let~$\Sigma'_i = (\sigma_1,\dots,\sigma_{i-1},\sigma_i',\sigma_{i+1},\ldots,\sigma_n)$. 
Then
\be{
	\Var h(\pi,\Sigma) 
	\leq \frac12\sum_{i=1}^n\IE\bclr{h(\pi,\Sigma)-h(\pi,\Sigma_i')}^2+ \frac{1}{2} \sum_{j=1}^{N-1} \IE\bclr{h(\pi,\Sigma)-h(\pi\tau_j,\Sigma)}^2.
}
\end{lemma}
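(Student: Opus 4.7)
The plan is to decompose $\Var h(\pi,\Sigma)$ via the law of total variance, treating the randomness of $\Sigma$ and of $\pi$ separately and applying an Efron-Stein-type inequality to each piece. The $\Sigma$ piece is handled by the classical Efron-Stein inequality for independent coordinates, while the $\pi$ piece is handled by a Doob-martingale argument with the filtration that reveals $\pi(1), \pi(2), \ldots$ one position at a time, identifying each martingale increment with the random transposition $\tau_j$.

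By the law of total variance,
\[
\Var h(\pi,\Sigma) = \IE[\Var(h(\pi,\Sigma)\given\pi)] + \Var\IE[h(\pi,\Sigma)\given\pi].
\]
Conditional on $\pi$, the permutations $\sigma_1,\ldots,\sigma_n$ are independent, so the classical Efron-Stein inequality for independent variables gives
\[
\Var(h(\pi,\Sigma)\given\pi) \leq \tfrac{1}{2}\sum_{i=1}^n \IE[(h(\pi,\Sigma)-h(\pi,\Sigma_i'))^2\given\pi],
\]
and taking the expectation over $\pi$ produces the first sum in the statement.

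For the second term, set $g(\pi) = \IE[h(\pi,\Sigma)\given\pi]$ and form the Doob martingale $M_j = \IE[g(\pi)\given\pi(1),\ldots,\pi(j)]$, so that $\Var g(\pi) = \sum_{j=1}^{N-1}\IE(M_j-M_{j-1})^2$ by orthogonality of increments. Conditional on $\cF_{j-1} := \sigma(\pi(1),\ldots,\pi(j-1))$, the value $\pi(j)$ is uniform on $A_{j-1} := [N]\setminus\{\pi(1),\ldots,\pi(j-1)\}$, and writing $\varphi(v) = \IE[g(\pi)\given\cF_{j-1},\pi(j)=v]$ gives $M_j = \varphi(\pi(j))$. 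The central claim is that, given $\cF_{j-1}$, the pair $(\pi(j),(\pi\tau_j)(j))$ is jointly uniform on $A_{j-1}\times A_{j-1}$: indeed, $(\pi\tau_j)(j)=\pi(K)$ with $K$ uniform on $\{j,\ldots,N\}$ independent of $\pi$, and a short computation shows the conditional probability equals $1/|A_{j-1}|^2$ for every pair (with the diagonal $K=j$ contributing exactly the necessary weight). Since in addition $\pi\tau_j\eqlaw\pi$ conditionally on $\cF_{j-1}$, one also obtains $\varphi((\pi\tau_j)(j)) = \IE[g(\pi\tau_j)\given\cF_{j-1},(\pi\tau_j)(j)]$; combining these facts with a conditional Jensen inequality yields
\[
\IE[(M_j-M_{j-1})^2\given\cF_{j-1}] = \tfrac12\IE[(\varphi(\pi(j))-\varphi((\pi\tau_j)(j)))^2\given\cF_{j-1}] \leq \tfrac12\IE[(g(\pi)-g(\pi\tau_j))^2\given\cF_{j-1}].
\]

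Summing over $j$, taking unconditional expectations, and applying Jensen once more via $g(\pi)-g(\pi\tau_j) = \IE[h(\pi,\Sigma)-h(\pi\tau_j,\Sigma)\given\pi,\tau_j]$ (using the independence of $\Sigma$ from $(\pi,\tau_j)$) upgrades $g$ to $h$ on the right-hand side and yields the second sum. The main technical obstacle lies in the pair-uniformity claim for $(\pi(j),(\pi\tau_j)(j))$ together with the identity $\pi\tau_j\eqlaw\pi$ given $\cF_{j-1}$: these two conditional-distribution facts are what legitimise the symmetric variance identity, and hence the factor of $\tfrac12$ in the final bound, effectively playing the role that independence of coordinates plays in the classical Efron-Stein argument.
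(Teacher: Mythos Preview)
Your proof is correct and takes a genuinely different route from the paper's. The paper builds an explicit Stein coupling: it sets $\pi_j=\pi\tau_j\cdots\tau_1$ and $\Sigma_i=(\sigma_1',\dots,\sigma_i',\sigma_{i+1},\dots,\sigma_n)$, uses the Fisher--Yates fact that $\pi_{N-1}$ is independent of $\pi_0$, and verifies that a suitable mixture $(W,W',G)$ satisfies $\IE\{Gf(W')-Gf(W)\}=\IE\{Wf(W)\}$. The bound then follows from $\Var W=\IE\{G(W'-W)\}$ and Cauchy--Schwarz applied termwise. Your argument instead separates the two sources of randomness via the law of total variance: the $\Sigma$ part is dispatched by the off-the-shelf Efron--Stein inequality, and the $\pi$ part by a Doob martingale along the filtration revealing $\pi(1),\pi(2),\dots$, with the pair-uniformity of $(\pi(j),(\pi\tau_j)(j))$ given $\cF_{j-1}$ supplying the i.i.d.\ structure that yields the factor $\tfrac12$. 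The paper's approach has the virtue of staying within the Stein-coupling framework that drives the rest of the article, and it handles $\pi$ and $\Sigma$ symmetrically in a single telescoping construction; your approach is arguably more elementary and transparent to readers familiar with standard Efron--Stein machinery, at the cost of treating the two pieces asymmetrically and requiring the somewhat delicate verification that $\IE[g(\pi)\given\cG]=\varphi(\pi(j))$ and $\IE[g(\pi\tau_j)\given\cG]=\varphi((\pi\tau_j)(j))$ for $\cG=\sigma(\cF_{j-1},\pi(j),(\pi\tau_j)(j))$, which underlies your conditional Jensen step but is left implicit in your sketch.
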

	
\begin{proof} Without loss of generality assume~$\IE h(\pi,\Sigma)=0$. Let~$\pi_0 = \pi$ and~$\Sigma_0=\Sigma$, and let
\be{
	\pi_j = \pi_0 \tau_{j} \cdots \tau_{1}, \qquad 1 \leq j\leq N-1,
}
and
\be{
	\Sigma_i = (\sigma'_1,\dots,\sigma_i',\sigma_{i+1},\dots,\sigma_{n}),
	\qquad 1\leq i \leq n.
}	
		
Let~$B$ be uniform on~$\{0,1\}$, let~$I$ be uniform on~$\{1,\dots,n\}$, let~$J$ be uniform on~$\{1,\dots,N-1\}$, and assume~$B$,~$I$ and~$J$ are mutually independent and independent of all else. Let 
$W=h(\pi_0,\Sigma_0)$, let~$W_{1,i}' = h(\pi_0,\Sigma_i')$, and let~$W_{2,j}' = h(\pi_0 \tau_j,\Sigma_0)$, and~$W' = BW_{1,I}'+(1-B)W_{2,J}'$. Let~$G_{1,i} = n\bclr{h(\pi_{N-1},\Sigma_i) - h(\pi_{N-1},\Sigma_{i-1})}$, let~$G_{2,j} = (N-1)\bclr{h(\pi_j,\Sigma_0) - h(\pi_{j-1},\Sigma_0)}$, and let~$G = BG_{1,I}+(1-B)G_{2,J}$. Let~$g:\IR\to\IR$ be any bounded measurable function. Then, on the one hand,
\bes{
	-\IE\bclc{Gg(W)}
	& = -\frac{1}{2}\sum_{i=1}^{n} \IE\bclc{\bclr{h(\pi_{N-1},\Sigma_i) - h(\pi_{N-1},\Sigma_{i-1})}g(W)} \\
	& \qquad   - \frac{1}{2}\sum_{j=1}^{N-1} \IE\bclc{\bclr{h(\pi_j,\Sigma_0) - h(\pi_{j-1},\Sigma_0)}g(W)} \\
	& = -\frac{1}{2}\IE\bclc{\bclr{h(\pi_{N-1},\Sigma_{n}) - h(\pi_{N-1},\Sigma_0)}g(W)} \\
	&\qquad  - \frac{1}{2}\IE\bclc{\bclr{h(\pi_{N-1},\Sigma_0) - h(\pi_{0},\Sigma_0)}g(W)}\\
	& = -\frac{1}{2}\IE h(\pi_{N-1},\Sigma_{n})\IE g(W) + \frac{1}{2}\IE\bclc{h(\pi_{0},\Sigma_0)g(W)} \\
	& = \frac{1}{2}\IE\clc{Wg(W)},
}
where we used that~$(\pi_{N-1},\Sigma_{n})$ is equal in distribution to and independent of~$(\pi_{0},\Sigma_{0})$; this follows e.g.\ from \emph{Algorithm P} of \cite[p.~147]{Knuth1969} since the distribution of $\pi_{N-1}$ is uniform conditionally on $\pi_{0}$, and therefore independent of $\pi_{0}$. 

On the other hand, for all $i$ we have~$(\Sigma_i,\Sigma_{i-1},\Sigma'_i)\eqlaw(\Sigma_{i-1},\Sigma_{i},\Sigma_0)$ since $\Sigma\eqlaw \Sigma_i'$, and for all $j$ that $(\pi_j,\pi_{j-1},\pi_0 \tau_j)\eqlaw(\pi_{j-1},\pi_j,\pi_0)$, by recalling the definition of~$\pi_j$  and observing that~$\pi_0$ and~$\pi_0\tau_j$ have the same distribution, and that both are independent of~$\tau_{j-1}\cdots\tau_1$, so
\bes{
	\IE\bclc{G g(W')} 
	& = \frac{1}{2}\sum_{i=1}^{n} \IE\bclc{\bclr{h(\pi_{N-1},\Sigma_i) - h(\pi_{N-1},\Sigma_{i-1})}g(h(\pi_0,\Sigma'_i))} \\
	& \qquad +\frac{1}{2}\sum_{j=1}^{N-1} \IE\bclc{\bclr{h(\pi_j,\Sigma_0) - h(\pi_{j-1},\Sigma_0)}g(h(\pi_0\tau_{j},\Sigma_0))} \\
	& = \frac{1}{2}\sum_{i=1}^{n} \IE\bclc{\bclr{h(\pi_{N-1},\Sigma_{i-1}) - h(\pi_{N-1},\Sigma_{i})}g(h(\pi_0,\Sigma_0))} \\
	& \qquad +\frac{1}{2}\sum_{j=1}^{N-1} \IE\bclc{\bclr{h(\pi_{j-1},\Sigma_0) - h(\pi_{j},\Sigma_0)}g(h(\pi_0,\Sigma_0))} \\
	& =  \frac{1}{2}\IE \bclc{(h(\pi_{N-1},\Sigma_0)-h(\pi_{N-1},\Sigma_n))g(h(\pi_0,\Sigma_0))}\\
	& \qquad + \frac{1}{2}\IE \bclc{(h(\pi_{0},\Sigma_0)-h(\pi_{N-1},\Sigma_0))g(h(\pi_0,\Sigma_0))}\\
	& =\frac{1}{2} \IE\bclc{Wg(W)}.
}
Therefore,~$(W,W',G)$ is a Stein coupling and, specializing \eqref{2} to the case~$f(x)=x$ and applying the Cauchy Schwarz inequality and noting that $(\Sigma_i,\Sigma_{i-1})\eqlaw (\Sigma_i',\Sigma)$, we have
\bes{
	\Var W 
	& = \IE\clc{G(W'-W)} \\
	& = \frac12\sum_{i=1}^{n}\IE\bclc{\bclr{h(\pi_{N-1},\Sigma_i)-h(\pi_{N-1},\Sigma_{i-1})}\bclr{h(\pi_0,\Sigma'_i)-h(\pi_0,\Sigma)}} \\
	&\qquad +  \frac12\sum_{j=1}^{N-1}\IE\bclc{\bclr{h(\pi_j,\Sigma_0)-h(\pi_{j-1},\Sigma_0)}\bclr{h(\pi_0\tau_j,\Sigma_0)-h(\pi_0,\Sigma_0)}}\\
	& \leq \frac12\sum_{i=1}^{n}\bbclr{\IE\bclr{h(\pi_{N-1},\Sigma_i)-h(\pi_{N-1},\Sigma_{i-1})}^2\IE\bclr{h(\pi_0,\Sigma'_i)-h(\pi_0,\Sigma)}^2}^{1/2} \\
	&\qquad + \frac12\sum_{j=1}^{N-1}\bbclr{\IE\bclr{h(\pi_j,\Sigma)-h(\pi_{j-1},\Sigma)}^2\IE\bclr{h(\pi_0\tau_j,\Sigma)-h(\pi_0,\Sigma)}^2}^{1/2},
}
from which the claim follows.
\end{proof}

\begin{lemma}[Tail and moment bounds for the hypergeometric distribution]\label{lem2} Let~$ H$ have the hypergeometric distribution\/~$\Hyp(N,m,n)$ counting the number of white balls among~$m$ draws from an urn with~$N$ balls,~$n$ of which are white and~$N-n$ black. Let~$\gamma =\IE  H = nm/N$. Then, for any~$t>0$,
\ben{\label{89}
	\IP[ H\geq \gamma  + t] \leq \exp\bbbclr{\frac{-t^2}{2\gamma +t}}
}
Moreover, for any~$k\geq 1$, there is a constant~$C_k$ independent of~$\gamma$ such that
\be{
	\IE  H^k \leq C_k(\gamma ^k+1).
}
\end{lemma}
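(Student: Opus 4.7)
My plan is to derive \eqref{89} from a Chernoff-type argument based on Hoeffding's classical comparison between the hypergeometric and binomial moment generating functions, and then to obtain the moment bound by integrating the tail bound.

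For the tail bound, I would first invoke Hoeffding's inequality \cite{Hoeffding63}, which asserts that for any convex function~$\varphi$, $\IE \varphi(H) \le \IE \varphi(B)$ where $B\sim\mathrm{Bin}(m,n/N)$. Applying this to $\varphi(x)=e^{\lambda x}$ for $\lambda>0$ yields
\be{
\IE e^{\lambda H} \le \IE e^{\lambda B} = (1-p+pe^\lambda)^m \le \exp\bclr{\gamma(e^\lambda-1)},
}
with $p=n/N$ and $\gamma=mp$. A standard Markov-inequality step gives
\be{
\IP[H\ge \gamma+t] \le \exp\bclr{-\lambda(\gamma+t)+\gamma(e^\lambda-1)},
}
and optimizing over $\lambda>0$ with the choice $\lambda=\log(1+t/\gamma)$ yields the Cram\'er-type bound $\exp(-\gamma h(t/\gamma))$ for $h(u)=(1+u)\log(1+u)-u$. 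The claim then follows from the elementary inequality $h(u)\ge u^2/(2+u)$ for $u\ge 0$, since $\gamma h(t/\gamma) \ge t^2/(2\gamma+t)$.

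For the moment bound, I would use the layer-cake identity
\be{
\IE H^k = \int_0^\infty k x^{k-1}\IP[H\ge x]\,dx
}
and split the integral at $2\gamma$. The piece on $[0,2\gamma]$ is bounded trivially by $(2\gamma)^k$. For the piece on $[2\gamma,\infty)$, write $x=\gamma+t$ with $t\ge\gamma$; then $2\gamma+t\le 3t$, so \eqref{89} simplifies to $\IP[H\ge\gamma+t]\le e^{-t/3}$. Substituting and using $(t+\gamma)^{k-1}\le 2^{k-1}(t^{k-1}+\gamma^{k-1})$ reduces the tail integral to a sum of $\int_\gamma^\infty t^{k-1}e^{-t/3}dt$, controlled by the full gamma integral $3^k\Gamma(k)$, and $\gamma^{k-1}\int_\gamma^\infty e^{-t/3}dt = 3\gamma^{k-1}e^{-\gamma/3}$, which is uniformly bounded in $\gamma\ge 0$. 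Combining the two pieces yields $\IE H^k\le C_k(\gamma^k+1)$.

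The only genuine obstacle is the tail bound, and there one simply needs to cite or recall Hoeffding's MGF comparison; after that the calculation is the textbook Bennett/Bernstein derivation. The moment bound is then essentially mechanical.
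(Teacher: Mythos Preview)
Your proof is correct, but it takes a different route from the paper's own argument for the tail bound. The paper constructs a bounded size-bias coupling for~$H$: pick a uniform white ball; if it is already in the sample, set $H^s=H$, otherwise swap it in for a uniformly chosen sampled ball, so that $H^s\in\{H,H+1\}$ and $|H^s-H|\le 1$. The tail bound \eqref{89} then follows directly from the concentration inequality of \cite{Ghosh2011} for random variables admitting a size-bias coupling bounded by~$1$. Your approach instead goes through Hoeffding's convex-order comparison of $\Hyp(N,m,n)$ with $\mathrm{Bin}(m,n/N)$, bounds the binomial MGF by the Poisson MGF, and optimizes via the standard Bennett calculation and the elementary inequality $(1+u)\log(1+u)-u\ge u^2/(2+u)$. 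Both arguments are short; yours is the more classical and self-contained one, while the paper's choice keeps the proof within the Stein-coupling toolkit used throughout the article. For the moment bound, the paper proceeds by noting that \eqref{89} implies $\IP[H\ge\gamma+t]\le\exp\bigl(-(t-1)/(\gamma+1)\bigr)$ for $t\ge1$, so that $H-\gamma-1$ is stochastically dominated by an exponential with mean $\gamma+1$, from which $\IE H^k\le\IE(X+\gamma+1)^k$ gives the claim; your layer-cake split at $2\gamma$ with the simplification $e^{-t^2/(2\gamma+t)}\le e^{-t/3}$ for $t\ge\gamma$ is an equally clean alternative.
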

\begin{proof}To construct a bounded size bias coupling, index the white balls by~$[n]$, and write~$ H=\sum_{i=1}^n I_i$ where
$I_i$ is the indicator that the~$i^{\mathrm{th}}$ white ball is sampled. Construct~$ H^s$ with the $ H$-size biased distribution by uniformly sampling a random index~$J$ from~$1$ to~$n$ independently of~$I_1,\ldots,I_n$; if~$I_J=1$, set~$ H^s =  H$, otherwise independently and uniformly select a ball from the sample and swap it with the~$J^{\mathrm{th}}$ white ball. It is easy to see that~$ H^s$ has the size-bias distribution, see for instance, Lemma 2.1 of \cite{Goldstein96}. Moreover,~$ H^s =  H+1$ if a sampled black ball was swapped with the~$J^{\mathrm{th}}$ white ball, and~$ H^s =  H$ otherwise. Hence,~$\abs{ H^s- H}\leq 1$, and the tail-bound \eqref{89} follows readily from Theorem 1.1 of \cite{Ghosh2011}. 
		
Now, it is straightforward to check that~$t^2/(2\gamma +t)\geq (t-1)/(\gamma +1)$ whenever~$t\geq 1$ and~$\gamma >0$, so that
\be{
	\IP[ H \geq \gamma  + t] \leq \exp\bbbclr{\frac{-(t-1)}{\gamma +1}} \qm{for all~$t \ge 1$.}
}
Hence, $ H-\gamma -1$ is stochastically dominated by an exponential random variable~$X$ with mean~$1/(\gamma +1)$, and in particular
\bes{
	\IE  H^k 
	\leq \IE(X+\gamma +1)^k 
	\leq 3^{k-1}(\IE X^k+\gamma ^k+1)
	= 3^{k-1}(k!(\gamma +1)^k+\gamma ^k+1),
}
from which the second claim easily follows. 
\end{proof}

A bound similar to \eqref{89} can be obtained from \cite[Corollary~1]{Greene2017} with better constants, but under additional conditions on the parameters of the hypergeometric distribution
	
\begin{lemma}\label{lem3} If~$H \sim\Hyp(N,m,n)$, then
\be{
	\frac{mn}{N}-\frac{m^2n^2}{2N^2}\enskip\leq\enskip 1-e^{-mn/N} \enskip\leq\enskip \IP[ H>0] \enskip\leq\enskip \frac{mn}{N}
}
and
\be{
	e^{-mn/(N-m-n+1)} \enskip\leq\enskip \IP[ H=0] \enskip\leq\enskip 
	e^{-mn/N}, 
}
where the lower bound on~$\IP[ H=0]$ is valid whenever~$m+n-1<N$.
\end{lemma}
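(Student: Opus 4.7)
The key starting point is the exact product formula
\be{
  \IP[H=0] \;=\; \frac{\binom{N-n}{m}}{\binom{N}{m}}
  \;=\; \prod_{i=0}^{n-1}\frac{N-m-i}{N-i}
  \;=\; \prod_{i=0}^{n-1}\bbbclr{1-\frac{m}{N-i}},
}
which makes the argument amount to replacing each factor by a suitable exponential. Once the two-sided bound on $\IP[H=0]$ is established, the bounds on $\IP[H>0]=1-\IP[H=0]$ follow essentially for free.

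For the upper bound on $\IP[H=0]$, I would use the elementary inequality $1-x\leq e^{-x}$ for each factor and then $1/(N-i)\geq 1/N$ to get
\be{
  \IP[H=0]\leq \prod_{i=0}^{n-1} e^{-m/(N-i)} \leq e^{-mn/N}.
}
For the lower bound, still assuming $m+n-1<N$ so that all relevant denominators are positive, I would apply the inequality $\log(1-x)\geq -x/(1-x)$ valid for $x\in[0,1)$ to each factor, giving
\be{
  \log \IP[H=0] \;\geq\; -\sum_{i=0}^{n-1}\frac{m}{N-i-m} \;\geq\; -\frac{mn}{N-m-n+1},
}
using $N-i-m\geq N-(n-1)-m$ for $0\leq i\leq n-1$, and then exponentiating.

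The three bounds on $\IP[H>0]$ then follow immediately. The upper bound $\IP[H>0]\leq mn/N$ is just Markov's inequality applied to $\IE H = mn/N$ (or equivalently a union bound over the $n$ white balls). The middle inequality $1-e^{-mn/N}\leq \IP[H>0]$ is the complementary form of the upper bound on $\IP[H=0]$ already derived. The leftmost inequality reduces to the elementary fact $1-e^{-x}\geq x-x^2/2$ for $x\geq 0$, which one verifies by checking that $f(x)=1-x+x^2/2-e^{-x}$ satisfies $f(0)=f'(0)=0$ and $f''(x)=1-e^{-x}\geq 0$, so $f\geq 0$ on $[0,\infty)$; applying this with $x=mn/N$ completes the proof. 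No step is expected to pose any serious obstacle; the only point requiring care is the denominator $N-m-n+1$ in the lower bound, which is why the hypothesis $m+n-1<N$ is imposed there.
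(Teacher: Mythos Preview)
Your proof is correct and follows essentially the same route as the paper: an exact product for $\IP[H=0]$, the inequalities $1-x\le e^{-x}$ and $\log(1-x)\ge -x/(1-x)$ factor by factor, Markov for $\IP[H>0]\le\IE H$, and $1-e^{-x}\ge x-x^2/2$. The only cosmetic difference is that the paper writes the product over the $m$ draws, $\IP[H=0]=\prod_{j=0}^{m-1}\bigl(1-\tfrac{n}{N-j}\bigr)$, rather than over the $n$ white balls as you do; by the symmetry of the hypergeometric this is immaterial and the bounds come out identical.
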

\begin{proof} Since~$\IP[ H>0]\leq \IE  H$, the upper bound on~$\IP[ H>0]$ immediately follows. Using the usual exponential upper bound for the final inequality,
\besn{ \label{90}
\bbclr{1-\frac{n}{N-m+1}}^m 	\le \IP\cls{ H=0} 
	& = \bbclr{1-\frac{n}{N}}\cdots\bbclr{1-\frac{n}{N-m+1}}\\
	&\leq\bbclr{1-\frac{n}{N}}^m 
	\leq e^{-mn/N},
}
from which the upper bound on~$\IP[ H=0]$ and first lower bound on~$\IP[ H>0]$ follow. The second lower bound on~$\IP[ H>0]$ follows from the first lower bound and the inequality~$e^{-x}\leq 1-x+x^2/2$ when~$x\geq 0$. The lower bound on~$\IP[ H=0]$ follows from the inequality~$\log(1+x) \ge x/(1+x)$ for~$x>-1$ and the lower bound in \eqref{90}, which together yield
\be{
\IP[ H=0]  \ge \exp\bbclr{-\frac{mn}{(N-m+1)\bclr{1-\frac{n}{N-m+1}}}}
 = \exp\bbclr{-\frac{mn}{N-m-n+1}}.\qedhere
}	
\end{proof}

\begin{lemma}\label{lem4} For any~$x\geq 0$
\be{
  \frac{\min\{x^2,1\}}4\leq 1-e^{-x}(1+x)\leq \frac{\min\{x^2,2\}}{2}.
}
\end{lemma}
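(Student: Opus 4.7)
Define $f(x) = 1 - e^{-x}(1+x)$. The first step is to observe that $f(0) = 0$ and $f'(x) = xe^{-x} \ge 0$, so $f$ is nondecreasing on $[0,\infty)$, with $\lim_{x\to\infty} f(x) = 1$. In particular, $f(x) \le 1$ for all $x \ge 0$, which already gives the $2/2$ branch of the upper bound.

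For the remaining branch of the upper bound, I would use the identity $f(x) = \int_0^x t e^{-t}\,dt$ together with the trivial estimate $te^{-t} \le t$ to obtain $f(x) \le x^2/2$. Combining with $f(x) \le 1$, the right-hand inequality $f(x) \le \min\{x^2,2\}/2$ follows for all $x \ge 0$.

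For the lower bound, split at $x = 1$. When $x \ge 1$, $\min\{x^2,1\}/4 = 1/4$, and monotonicity of $f$ gives $f(x) \ge f(1) = 1 - 2/e$, so it suffices to check numerically that $1 - 2/e \ge 1/4$, which holds since $2/e < 3/4$. When $0 \le x \le 1$, we must show $g(x) := f(x) - x^2/4 \ge 0$. Here $g(0) = 0$ and $g'(x) = x(e^{-x} - 1/2)$, which is nonnegative on $[0,\log 2]$ and nonpositive on $[\log 2,1]$; hence $g$ attains its minimum on $[0,1]$ either at $0$ or at $1$, and it suffices to verify $g(1) = 1 - 2/e - 1/4 \ge 0$, i.e., $2/e \le 3/4$, the same numerical fact used above.

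The only real wrinkle is the sharp numerical check $2/e \le 3/4$, which is where both branches of the lower bound are tight; everything else is a routine application of monotonicity and the integral representation $f(x) = \int_0^x te^{-t}\,dt$. No obstacle is expected.
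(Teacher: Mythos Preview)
Your proof is correct. It differs from the paper's in two places, and in both cases your argument is the cleaner one.

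For the upper bound $f(x)\le x^2/2$, the paper uses Taylor's theorem with Lagrange remainder, $\psi(x)=\tfrac{x^2}{2}\psi''(\xi_x)$, and then spends some effort showing $|\psi''(y)|\le 1$ for all $y\ge 0$ via a case analysis and a third-derivative computation. Your integral representation $f(x)=\int_0^x te^{-t}\,dt\le\int_0^x t\,dt=x^2/2$ gets there in one line.

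For the lower bound on $[0,1]$, the paper shows that the ratio $q(x)=f(x)/x^2$ is decreasing (by computing $q'$ and analyzing the sign of its numerator), so $q(x)\ge q(1)=1-2/e$. You instead analyze the difference $g(x)=f(x)-x^2/4$, observe $g'(x)=x(e^{-x}-1/2)$ changes sign once, and reduce to the endpoint values. Both arguments are elementary and both bottom out at the same numerical check $2/e\le 3/4$; yours avoids the slightly messier quotient-rule computation. The $x\ge 1$ branch and the upper bound $f(x)\le 1$ are handled identically in both proofs.
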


\begin{proof}
The upper and lower bounds hold trivially at~$x=0$. With~$\psi(x)=1-e^{-x}(1+x)$, by Talyor's expansion around zero, for all~$x > 0$ there exists~$\xi_x \in (0,x)$ such that
\bg{
\psi(x)=\psi(0)+x\psi'(0)+\frac{x^2}{2}\psi''(\xi_x)=\frac{x^2}{2}\psi''(\xi_x), \\
\qmq{where}\psi'(x)=xe^{-x} \qmq{and} \psi''(x)=e^{-x}(1-x).
}
For~$y \in [0,2]$ we have~$|\psi''(y)| \le |1-y|\le 1$, thus proving the upper bound $x^2/2$ over this interval. As~$\psi'''(y)=e^{-y}(y-2) \ge 0$ for all~$y \ge 2$, the function~$\psi''(y)$ is non-decreasing for~$y \ge 2$. As~$\psi''(2)=-e^{-2} \in (-1,0)$, and~$\lim_{y \rightarrow \infty} \psi''(y)=0$, we have~$\psi''(y) \in (-1,0)$ for all~$y \ge 2$, thus proving the upper bound $x^2/2$ on $(2,\infty)$. As $\psi'(x) \ge 0$ for all $x \ge 0$, the function is non-decreasing on $[0,\infty)$, and as $\psi(x) \rightarrow 1$ as $x \rightarrow \infty$, we have $\psi(x) \le 1$ for all $x \ge 0$.

For the lower bound, for~$x>0$ letting 
\beas
q(x)=\frac{1-e^{-x}(1+x)}{x^2}, 
\qmq{
we have
}
q'(x)=\frac{e^{-x}(x^2+2x+2)-2}{x^3}.
\enas
With~$p(x)=e^{-x}(x^2+2x+2)-2$ we have~$p'(x)=-x^2e^{-x} \le 0$, so~$q(x)$ is decreasing for~$x>0$. In particular,~$q(x) \ge q(1)=1-2e^{-1} \ge 1/4$ for~$x \in [0,1]$. As~$\psi'(x) = xe^{-x}$, the function~$\psi(x)$ is non-decreasing, and hence for~$x \ge 1$ we have~$\psi(x) \ge \psi(1) = 1-2e^{-1} \ge 1/4$, completing the proof of the lower bound. 
\end{proof}

\begin{lemma}\label{lem5} For all $(n,m)\in\Theta$ and distinct vertices $v$ and $w$, the indicators $\I[d_w=0]$ and $\I[d_v=0]$ that $v$ and $w$ are isolated are negatively correlated, that is, 
\beas
\IP[d_v=0,d_w=0] \le \IP[d_v=0]\IP[d_w=0], \qmq{and} \sigma^2_{n,m} \leq \min\{\mu_{n,m},2m\}.
\enas 
\end{lemma}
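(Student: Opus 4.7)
The plan has two parts, corresponding to the two claims. For the negative correlation, I would first compute explicit formulas. Since there are $n-1$ edges incident to a given vertex $v$ out of $N=\binom{n}{2}$ total, and the $m$ edges of $\cG$ form a uniform $m$-subset of these $N$, we have
\be{
\IP[d_v=0] = \frac{\binom{N-(n-1)}{m}}{\binom{N}{m}}
\qmq{and}
\IP[d_v=0,d_w=0] = \frac{\binom{N-(2n-3)}{m}}{\binom{N}{m}},
}
the latter because the number of edges incident to either $v$ or $w$ is $2(n-1)-1 = 2n-3$, accounting for the shared edge $\{v,w\}$. Negative correlation is thus equivalent to the binomial inequality $\binom{N}{m}\binom{N-2n+3}{m} \leq \binom{N-n+1}{m}^2$, which, upon expanding the binomials as products and canceling $(m!)^2$, reduces to showing that for every $j=0,\ldots,m-1$,
\be{
(N-j)(N-2n+3-j) \leq (N-n+1-j)^2.
}
(When any of these binomials vanish, the inequality is trivial.)

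The key identity is that if I set $c_j = N-n+1-j$, then $N-j = c_j + (n-1)$ and $N-2n+3-j = c_j - (n-2)$, so the left-hand side becomes
\be{
(c_j+(n-1))(c_j-(n-2)) = c_j^2 + c_j - (n-1)(n-2).
}
Hence the desired inequality is equivalent to $c_j \leq (n-1)(n-2)$. This is the step I expect to be the main technical point; it goes through because
\be{
c_j \leq c_0 = N-n+1 = \frac{n(n-1)}{2} - (n-1) = \frac{(n-1)(n-2)}{2} \leq (n-1)(n-2)
}
for all $n \geq 3$, as in the definition of $\Theta$ in \eqref{35}.

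For the variance bounds, decompose $\sigma_{n,m}^2 = \sum_v \Var\I[d_v=0] + \sum_{v\neq w}\Cov(\I[d_v=0],\I[d_w=0])$. By part one the covariances are nonpositive, and by symmetry with $p = \mu_{n,m}/n$,
\be{
\sigma_{n,m}^2 \leq n\cdot p(1-p) = \mu_{n,m}(1-p) \leq \mu_{n,m},
}
which gives the first half of the bound. For the second half, I would apply Markov's inequality to the integer-valued variable $d_v$ to get
\be{
1-p = \IP[d_v \geq 1] \leq \IE d_v = \frac{2m}{n},
}
so that $\sigma_{n,m}^2 \leq np \cdot (2m/n) = 2mp \leq 2m$, completing the proof.
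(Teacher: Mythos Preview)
Your proof is correct and follows essentially the same route as the paper: both reduce the negative-correlation claim to the termwise inequality $(N-j)(N-2n+3-j)\le (N-n+1-j)^2$ and verify it by elementary algebra (your substitution $c_j$ and the paper's expansion to $N+2n\le n^2+1+k$ are equivalent), and both derive the variance bounds from $\sigma_{n,m}^2\le n p(1-p)$ together with Markov's inequality $\IP[d_v\ge 1]\le \IE d_v = 2m/n$ (the paper cites this as Lemma~\ref{lem3}).
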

\begin{proof} Vertex $v$ is isolated if and only if none of the $n-1$ edges that connect $v$ to another vertex is included in the set of $m$ edges selected. Likewise, distinct vertices $v$ and $w$ are both isolated if and only if none of a particular set of $(n-2)+(n-2)+1$ edges is selected. Hence, the first claim is equivalent to
\beas 
\frac{{N-2n+3 \choose m}}{{N \choose m}} \le  \frac{{N-n+1 \choose m}^2}{{N \choose m}^2}
\qmq{or}  
 {N \choose m} {N-2n+3 \choose m}\le {N-n+1 \choose m}^2.
\enas 
Expanding the binomial coefficients and canceling common factors yields the equivalent form
\beas 
(N)_m (N-2n+3)_m  \le (N-n+1)_m^2,
\enas 
where $(n)_k=n(n-1)\cdots(n-k+1)$,
and pairing up the~$k^{\mathrm{th}}$ factors of the falling factorials we obtain
\beas
\prod_{k=0}^{m-1} (N-k)(N-2n+3-k) \le \prod_{k=0}^{m-1} (N- n+1- k)^2.
\enas
It suffices to show the inequality holds termwise. 
Expanding both sides of the~$k^{\mathrm{th}}$ term of each side and simplifying yields
\beas  
N + 2n \le n^2+1+k.
\enas
The case~$k=0$ implies all others, and reduces to
$
0 \le n^2-3n+2 = (n-2)(n-1),
$
and so holds for all~$n \ge 2$, thus proving the first claim.

Since the indicators of vertices being isolated are negatively correlated, we have
\be{
    \sigma^2_{n,m}\leq n\IP[d_v=0]\IP[d_v>0] \leq \min\{n\IP[d_v=0],n\IP[d_v>0] \},
}  
from which~$\sigma_{n,m}^2\leq \mu_{n,m}$ is immediate. As~$d_v\sim\Hyp(N,m,n-1)$ for~$N=n(n-1)/2$, using Lemma~\ref{lem3} we have
\be{
    \sigma_{n,m}^2 \le n\IP[d_v>0] \leq n\frac{m(n-1)}{N}=2m,
}
as claimed.
\end{proof}

\begin{lemma}\label{lem6} For~$n\geq 6$ and~$0\leq m\leq n^2/4-3n/2$, we have
\besn{\label{91} 
 \exp\bbclr{-\frac{2m}{n}-\frac{8m(m+n)}{n^3}}\leq \frac{\mu_{n,m}}{n}\leq \exp\bbclr{-\frac{2m}{n}},
}
and
\besn{ \label{92}
	&\mu_{n,m}\bbbcls{1-\frac{\mu_{n,m}}{n}\bbbclr{1+\frac{2m}{n}+\frac{78m(m+n)}{n^3}}}\\
	&\qquad\Leq \sigma_{n,m}^2 
    \Leq \mu_{n,m}\bbbcls{1-\frac{\mu_{n,m}}{n}\bbbclr{1+\frac{2m}{n}-\frac{48m(m+n)}{n^3}}}.
}
\end{lemma}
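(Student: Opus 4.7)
My plan is to derive both bounds of Lemma~\ref{lem6} from the product representation $\mu_{n,m}/n = \prod_{j=0}^{n-2}(1 - m/(N-j))$ with $N = \binom{n}{2}$, combined with the standard bounds $-x - x^2/(2(1-x)) \le \log(1-x) \le -x$ for $x \in [0,1)$.

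For \eqref{91}, the upper bound follows immediately from $\log(1-x) \le -x$ and $\sum_{j=0}^{n-2} 1/(N-j) \ge (n-1)/N = 2/n$. For the lower bound, I would decompose $1/(N-j) = 1/N + j/[N(N-j)]$; the hypothesis $n \ge 6$ gives $N-j \ge N - n + 2 \ge n^2/4$, leading to $\sum m/(N-j) \le 2m/n + 8m/n^2$. The hypothesis $m \le n^2/4 - 3n/2$ also gives $N - j - m \ge n^2/4$, so the quadratic correction $\sum m^2/[2(N-j)(N-j-m)]$ is at most $(n-1)\cdot 8m^2/n^4 \le 8m^2/n^3$. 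Summing yields $\log(\mu_{n,m}/n) \ge -2m/n - 8m/n^2 - 8m^2/n^3 = -2m/n - 8m(m+n)/n^3$. Alternatively, the same conclusion drops out of Lemma~\ref{lem3} applied to $H \sim \Hyp(N,m,n-1)$ via $\mu_{n,m}/n = \IP[H=0]$, since a short computation gives $m(n-1)/(N-m-n+2) - 2m/n = m(2m+2n-4)/[n(N-m-n+2)] \le 8m(m+n)/n^3$.

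For \eqref{92}, I would start from the identity $\sigma^2_{n,m} = \mu_{n,m}(1-p_1) - (n-1)\mu_{n,m}\, p_1\,[1 - p_2/p_1^2]$, where $p_1 = \mu_{n,m}/n$ and $p_2 = \binom{N-(2n-3)}{m}/\binom{N}{m}$ is the probability that two prescribed vertices are both isolated. The crux is the algebraic identity
\[
\frac{p_2}{p_1^2} = \prod_{i=0}^{m-1}\frac{(N-i)(N-(2n-3)-i)}{(N-(n-1)-i)^2} = \prod_{i=0}^{m-1}\left(1 - \frac{M+i}{(M-i)^2}\right),\qquad M = \binom{n-1}{2},
\]
which follows by substituting $y = M-i$ and observing $(y+(n-1))(y-(n-2)) - y^2 = y - (n-1)(n-2) = y - 2M = -(M+i)$. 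Setting $\delta_i = (M+i)/(M-i)^2$, the elementary inequalities $\sum\delta_i - (\sum\delta_i)^2/2 \le 1 - \prod(1-\delta_i) \le \sum\delta_i$ reduce matters to controlling $(n-1)\sum\delta_i$; splitting $\sum\delta_i = \sum 1/(M-i) + 2\sum i/(M-i)^2$ and using $M \ge n^2/4$ and $M - m + 1 \ge n^2/4$ (both valid under the hypothesis for $n \ge 6$) yields $(n-1)\sum\delta_i = 2m/n + O(m/n^2) + O(m^2/n^3)$ with explicit constants safely below the stated $48$ and $78$.

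The main technical step is the clean product identity for $p_2/p_1^2$ through the substitution $y = M-i$; once this is in hand, the remainder is routine bookkeeping, verifying that the hypothesis keeps each of the denominators $N-j$, $N-j-m$, $M-i$, $M-m+1$ bounded below by $n^2/4$, and that the numerical slack in the constants $8$, $48$, and $78$ comfortably absorbs the modest first-order corrections in the expansions.
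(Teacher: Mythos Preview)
Your approach is correct, and for \eqref{91} it is essentially the same as the paper's: the paper invokes Lemma~\ref{lem3} applied to $H\sim\Hyp(N,m,n-1)$ and then simplifies $m(n-1)/(N-m-n+2)$ exactly as you indicate, using the key inequality $n^2-2m-3n+4\ge n^2/2$ (your denominator bound $N-j-m\ge n^2/4$ is the same thing).

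For \eqref{92}, however, your route is genuinely different from the paper's. The paper does \emph{not} work with the closed form $\sigma_{n,m}^2=\mu_{n,m}+n(n-1)p_2-\mu_{n,m}^2$ at all. Instead it exploits the Stein coupling already built for the main theorem: from the identity $\Var W=\IE\{G(W'-W)\}$ and the decomposition \eqref{64} of $B_v(m,\pi,\sigma_v)$ it obtains
\[
\sigma_{n,m}^2=\mu_{n,m}\Bigl(1-\tfrac{\mu_{n,m}}{n}-\IE\!\!\sum_{w\in N^1}I_w(m,\pi)+\IE\!\!\sum_{w\in M^1}I_{w,1}(m,\pi)\Bigr),
\]
and then bounds the two expectations by further hypergeometric calculations (equations \eqref{98}, \eqref{100}, \eqref{101}). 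Your argument is more elementary and self-contained: the product identity $p_2/p_1^2=\prod_{i=0}^{m-1}\bigl(1-(M+i)/(M-i)^2\bigr)$ via the substitution $y=M-i$ is a clean observation that bypasses the coupling entirely, and the subsequent bounds $\sum\delta_i-\tfrac12(\sum\delta_i)^2\le 1-\prod(1-\delta_i)\le\sum\delta_i$ together with $M\ge n^2/4$ and $M-m+1\ge n^2/4$ do yield constants comfortably inside $48$ and $78$ (a back-of-the-envelope check gives roughly $39$ and $22$). What the paper's approach buys is that it reuses the same coupling machinery driving the rest of the proof, so the variance estimate falls out of objects already needed; what your approach buys is that \eqref{92} becomes a standalone combinatorial lemma requiring nothing beyond the explicit mean and variance formulas stated at the start of Section~\ref{sec2}.
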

\begin{proof} Since the distribution of each individual degree is~$\Hyp(N,m,n-1)$, and as the hypothesis of  Lemma~\ref{lem3} holds due to the restriction assumed on $m$, 
it follows from that lemma that
\bes{
 \exp\bbclr{-\frac{m(n-1)}{N-m-n+2}}\leq \frac{\mu_{n,m}}{n}\leq \exp\bbclr{-\frac{2m}{n}},
}
yielding the upper bound in \eqref{91}. Since under the assertions on~$m$ and~$n$ we have~
\ben{\label{93}
  n^2-2m-3n+4 \geq n^2/2,
}
it follows that
\be{
  \frac{m(n-1)}{N-m-n+2} = \frac{2m}{n}+\frac{4m(m+n-2)}{n(n^2-2m-3n+4)}
  \leq \frac{2m}{n}+\frac{8m(m+n)}{n^3},
}
from which we obtain the lower bound in \eqref{91}.

In order to prove the upper and lower bounds on the variance, we use the fact that~$\Var(W)=\IE\clc{G(W'-W)}$ when~$(W,W',G)$ is a Stein coupling for a mean zero random variable~$W$; this identity follows immediately upon setting~$f(x)=x$ in \eqref{2}. Now recall \eqref{51}, \eqref{52} and \eqref{64}, and that~$N^v(m,\pi,\sigma_v)$ in \eqref{62} is the set of vertices that receive at least one edge when forming~$\cG^v(m,\pi,\sigma_v)$, and that $M^v(m,\pi,\sigma_v)$ in \eqref{63} is the set of all vertices $w \not =v$ such that $\{v,w\}$ is an edge in ${\mathcal G}(m,\pi)$, and does not receive a redistributed edge. As when $I_v(m,\pi)=1$ the sets $N^v(m,\pi,\sigma_v)$ and $M^v(m,\pi,\sigma_v)$ are empty, and recalling that $I_{w,1}(m,\pi)=\I[d_w(m,\pi)=1]$, we have
\besn{\label{94}
	&\sigma_{n,m}^2 \\
	&\quad = \IE\sum_{v\in[n]}\bbclr{I_v(m,\pi)-\frac{\mu_{n,m}}{n}}\\ &\quad\qquad\qquad\times\bbbclr{I_v(m,\pi)+\sum_{\substack{w\in\\ N^v(m,\pi,\sigma_v)}}I_w(m,\pi)
		-\sum_{\substack{w \in \\M^v(m,\pi,\sigma_v)}}I_{w,1}(m,\pi)}\\
	&\quad = n\IE\bbbclc{\bbclr{I_1(m,\pi)-\frac{\mu_{n,m}}{n}}\\ &\quad\qquad\qquad\times\bbbclr{I_1(m,\pi)+\sum_{\substack{w\in\\ N^1(m,\pi,\sigma_1)}}I_w(m,\pi)-\sum_{\substack{w \in\\ M^1(m,\pi,\sigma_1)}}I_{w,1}(m,\pi)}}\\
	&\quad = n\IE\bbbclc{I_1(m,\pi)\bbclr{1-\frac{\mu_{n,m}}{n}} -\frac{\mu_{n,m}}{n}\sum_{\substack{w\in\\ N^1(m,\pi,\sigma_1)}}I_w(m,\pi)\\
  &\kern20em+\frac{\mu_{n,m}}{n}\sum_{\substack{w \in\\ M^1(m,\pi,\sigma_1)}}I_{w,1}(m,\pi)}\\
	&\quad = \mu_{n,m}\bbbclr{1-\frac{\mu_{n,m}}{n} - \IE\sum_{\substack{w\in\\ N^1(m,\pi,\sigma_1)}}I_w(m,\pi)
		+ \IE\sum_{\substack{w \in\\ M^1(m,\pi,\sigma_1)}}I_{w,1}(m,\pi)}.
}
Now consider the first sum in \eqref{94}. Note that when  $d_1(m,\pi)=k$, of the potential $N$ edges, $n-1$ have vertex 1 as an endpoint, and an additional $m-k$ edges remain in ${\mathcal G}(m,\pi)$ and are not redistributed. Hence, 
\besn{ \label{95}
	&\IE\sum_{w\in N^1(m,\pi,\sigma_1)}I_w(m,\pi) \\
	&\quad = (n-1)\IP[2\in N^1(m,\pi,\sigma_1),d_2(m,\pi)=0] \\
	&\quad = \mu_{n,m}\frac{n-1}{n}\IP[2\in N^1(m,\pi,\sigma_1)|d_2(m,\pi)=0] \\
	&\quad = \mu_{n,m}\frac{n-1}{n}\sum_{k=0}^{n-2}\IP[2\in N^1(m,\pi,\sigma_1)|d_1(m,\pi)=k,d_2(m,\pi)=0]\\[-1.5ex]  &\kern16.5em\times\IP[d_1(m,\pi)=k|d_2(m,\pi)=0] \\
	&\quad = \mu_{n,m}\frac{n-1}{n}\sum_{k=0}^{n-2}
	\IP\bcls{\Hyp\bclr{N-(n-1)-(m-k),k,n-2}>0}\\[-1.5ex]
  &\kern16.5em\times\IP[d_1(m,\pi)=k|d_2(m,\pi)=0].
}
To arrive at the hypergeometric expression in the sum in the last equality from the conditional probability that vertex $2$ is incident on any of the $k$ redistributed edges that were removed from vertex $1$ when making the new graph, note that the total number of edges available is reduced from $N$ first by $n-1$, as vertex $1$ has been removed, and also due to the $m-k$ edges that were part of the original graph that are not changed. Of these remaining edges, $n-2$ are incident on vertex $2$, which is one fewer than their original number of $n-1$, due to the removal of vertex $1$.

Using Lemma~\ref{lem3},
\besn{\label{96}
	&\frac{k(n-2)}{N-n-m +k+1}
	-\frac{k^2(n-2)^2}{2(N-n-m +k+1)^2}\\
	&\qquad\qquad\leq
	\IP\bcls{\Hyp\bclr{N-(n-1)-(m -k),k,n-2}>0}\\
  &\kern18em       \leq \frac{k(n-2)}{N-n-m+k+1},
}
from which we obtain the upper bound
\bes{
	&\IE\sum_{w\in N^1(m,\pi,\sigma_1)}I_w(m,\pi) \\
	&\qquad \leq \frac{\mu_{n,m}}{n}
	\frac{(n-1)(n-2)}{(N-n-m+2)} \sum_{k =1}^{n-1} k \IP[d_1(m,\pi)=k|d_2(m,\pi)=0]\\
	&\qquad =  \frac{\mu_{n,m}}{n}
	\frac{(n-1)(n-2)}{(N-n-m+2)}\IE\clc{d_1(m,\pi)\given d_2(m,\pi)=0}.
}
Given~$d_2(m,\pi)=0$, we have $d_1(m,\pi)\sim\Hyp(N-(n-1),m,n-2)$, hence
\be{
	\IE\clc{d_1(m,\pi)\given d_2(m,\pi)=0} = \frac{m(n-2)}{N- n+1},
}
and so,
\besn{\label{97} 
	\IE\sum_{w\in N^1(m,\pi,\sigma_1)}I_w(m,\pi) 
	&\leq \frac{\mu_{n,m}}{n}
	\frac{m(n-1)(n-2)^2}{(N-n-m+2)(N-n+1)}\\
	&= \frac{\mu_{n,m}}{n}\bbcls{\frac{4m}{n}
	+ \frac{4m(2m+n-4)}{n(n^2-2m-3n+4)}}\\
	&\leq \frac{\mu_{n,m}}{n}\bbcls{\frac{4m}{n}
	+ \frac{16m(m+n)}{n^3}}.
}
Similarly, using the second moment expression from \eqref{72}
\be{
	\IE\bclc{d_1(m,\pi)^2\big| d_2(m,\pi)=0} = 
  \frac{m (n-2) (N+m n-2 n-3 m+3)}{(N-n) (N-n+1)},
}
and so from \eqref{96} we obtain the lower bound
\bes{
	&\IE\sum_{w\in N^1(m,\pi,\sigma_1)}I_w(m,\pi) \\
	&\qquad \geq \mu_{n,m} \frac{n-1}{n}
	\sum_{k=1}^{n-2} \bbclr{\frac{k(n-2)}{N-n-m +k+1}
		-\frac{k^2(n-2)^2}{2(N-n-m +k+1)^2}}\\
  &\kern20em\times \IP[d_1(m,\pi)=k|d_2(m,\pi)=0]\\
	&\qquad \geq \mu_{n,m} \frac{n-1}{n}
	\sum_{k=1}^{n-2} \bbclr{\frac{k(n-2)}{N-m -1}
		-\frac{k^2(n-2)^2}{2(N-n-m +2)^2}}\\
  &\kern20em\times\IP[d_1(m,\pi)=k|d_2(m,\pi)=0]\\
	&\qquad = \frac{\mu_{n,m}}{n} \bbbclr{\frac{(n-1)(n-2)}{N-m-1}
		\IE\bclc{d_1(m,\pi)\big| d_2(m,\pi)=0}\\
  &\kern8em
		-\frac{(n-1)(n-2)^2}{2(N-n-m+2)^2}\IE\bclc{d_1(m,\pi)^2\big| d_2(m,\pi)=0}}\\
	&\qquad = \frac{\mu_{n,m}}{n} \bbbclr{\frac{(n-1)(n-2)}{N-m-1}
		\frac{m(n-2)}{N-n+1}\\
	&\kern8em	-\frac{(n-1)(n-2)^2}{2(N-n-m+2)^2}\frac{m (n-2) (N+m n-2 n-3 m+3)}{(N-n) (N-n+1)}}.
}
Now, for the first term in the brackets we have
\bes{
  \frac{m(n-1)(n-2)^2}{(N-m-1)(N-n+1)}
  & =\frac{4m}{n}+\frac{4m(2m-n+2)}{n(n^2-2m-n-2)}  \\
  &  \geq \frac{4m}{n}-\frac{4mn}{n(n^2-2m-n-2)} 
   \geq\frac{4m}{n}-\frac{8m}{n^2},
}
where we have used \eqref{93} for the last inequality.
For the second term in the brackets,
\bes{
  &\frac{m(n-1)(n-2)^3 (N+m n-2 n-3 m+3)}{2(N-n) (N-n+1)(N-n-m+2)^2}
  = \frac{4 m (n-2)^2 (2 m+n-2)}{n \left(n^2-2 m-3 n+4\right)^2} \\
  &\qquad\leq\frac{8 m n^2 (m+n)}{n \left(n^2-2 m-3 n+4\right)^2} \leq \frac{32 m (m+n)}{n^3},
}
where again we have used \eqref{93} for the last inequality.
Hence, together with the upper bound \eqref{97}, we arrive at
\besn{ \label{98}
  &\frac{\mu_{n,m}}{n}\bbcls{\frac{4m}{n}-\frac{40m(m+n)}{n^3}}\\
  &\qquad\qquad\leq\IE\sum_{w\in N^1(m,\pi,\sigma_1)}I_w(m,\pi)
  \leq\frac{\mu_{n,m}}{n}\bbcls{\frac{4m}{n}+\frac{16m(m+n)}{n^3}}.
}
Now considering the second sum in \eqref{94}, we can write
\ben{\label{99}
  \sum_{\substack{w \in\\ M^1(m,\pi,\sigma_1)}}I_{w,1}(m,\pi)
  = \sum_{\substack{w:\{w,1\}\\\in\cG(m,\pi)}}I_{w,1}(m,\pi) - 
  \sum_{\substack{w \in\\ M^{c,1}(m,\pi,\sigma_1)}}I_{w,1}(m,\pi),
}
where $M^{c,1}(m,\pi,\sigma_1) = \{w:\{w,1\} \in {\mathcal G}(m,\pi), w \in N^1(m,\pi,\sigma_1)\}$.
Taking expectation of the first sum on the right hand side of \eqref{99} and noting that the distributions of the degrees in the graph are hypergeometric, we obtain that
\bes{
  &\IE\sum_{\substack{w:\{w,1\}\\\in\cG(m,\pi)}}I_{w,1}(m,\pi)\\	&\qquad=(n-1)\IP[\{1,2\}\in\cG(m,\pi),d_2(m,\pi)=1] \\
	&\qquad=(n-1)\IP[d_2(m,\pi)=1|\{1,2\}\in\cG(m,\pi)]\,\IP[\{1,2\}\in\cG(m,\pi)] \\
	&\qquad  =(n-1)\IP[\Hyp(N-1,m-1,n-2)=0]\frac{m}{N}\\
	&\qquad  =\frac{\mu_{n,m}}{n}\frac{N(n-1)}{N-m-n+2}\frac{m}{N}\\
	&\qquad=\frac{\mu_{n,m}}{n}\bbbcls{\frac{2m}{n}+\frac{4m(m+n-2)}{n(n^2-2m-3n+4)}}.
}
From this equality and using the assertions on~$m$ and~$n$, we obtain
\ben{\label{100}
	\frac{\mu_{n,m}}{n}\bbbcls{\frac{2m}{n}+\frac{2m(m+n)}{n^3}}\Leq\IE\sum_{\substack{w:\{w,1\}\\\in\cG(m,\pi)}}I_{w,1}(m,\pi)
	\Leq\frac{\mu_{n,m}}{n}\bbbcls{\frac{2m}{n}+\frac{8m(m+n)}{n^3}}.
}
Now taking expectation of the second sum of \eqref{99},  
\bes{
  &\IE\sum_{\substack{w\in\\M^{c,1}(m,\pi,\sigma_1)}}I_{w,1}(m,\pi) \\
  &\qquad = (n-1)\IP[\{1,2\}\in\cG(m,\pi),2\in N^1(m,\pi,\sigma_1),d_2(m,\pi)=1]\\
  &\qquad = (n-1)\sum_{k=1}^{n-1}\IP[\{1,2\}\in\cG(m,\pi),2\in N^1(m,\pi,\sigma_1),d_2(m,\pi)=1,d_1(m,\pi)=k]\\
  &\qquad = \frac{(n-1)m}{N}\sum_{k=1}^{n-1}\IP[2\in N^1(m,\pi,\sigma_1)\given d_2(m,\pi)=1,d_1(m,\pi)=k,\{1,2\}\in\cG(m,\pi)]\\
  &\kern10em\times \IP[d_1(m,\pi)=k\given d_2(m,\pi)=1,\{1,2\}\in\cG(m,\pi)]] \\[1ex]
  &\kern10em\times \IP[d_2(m,\pi)=1\given\{1,2\}\in\cG(m,\pi)]] \\
  &\qquad = \frac{(n-1)m}{N}\sum_{k=1}^{n-1}
  \IP[\Hyp(N-(n-1)-(m-k),k,n-2)>0]\\
  &\kern10em\times\IP[\Hyp(N-(n-1),m-1,n-2)=k-1]\\[1ex]
  &\kern10em\times\IP[\Hyp(N-1,m-1,n-2)=0].
}
We arrive at the first Hypergeomtric expression in the sum in the last equality by the same reasoning as that given following \eqref{95}; the remaining two expressions in the sum follow by similar, and simpler, means.

Now, for the first and last terms, using Lemma \ref{lem3} for the upper bound, we have
\bg{
	\IP[\Hyp(N-(n-1)-(m-k),k,n-2)>0]  \leq \frac{k(n-2)}{N-n-m+k+1}
	\\
	\IP[\Hyp(N-1,m-1,n-2)=0]  = \frac{\mu_{n,m}}{n}\frac{N}{N-m-n+2},
}
and thus, using in the final inequality that $n^2 \le 4(N-m-n+2)$, which holds via 
the assumption that $m\leq n^2/4-3n/2$, and that $n^2 \le 4(N-n)$, which holds as $n \ge 6$, true by assumption, we obtain 
\besn{\label{101}
  &\IE\sum_{\substack{w:\\M^{c,1}(m,\pi,\sigma_1)}}I_{w,1}(m,\pi) \\
  &\qquad \leq \frac{\mu_{n,m}}{n}\frac{(n-1)^2m}{(N-m-n+2)^2}\\
  &\kern7em\times \sum_{k=0}^{n-2}
  (k+1) \IP[\Hyp(N-(n-1),m-1,n-2)=k]\\
  &\qquad = \frac{\mu_{n,m}(n-1)^2m}{n(N-m-n+2)^2}\bbbclr{\frac{(m-1)(n-2)}{(N-(n-1))}+1}
  \leq \frac{\mu_{n,m}}{n}\frac{16m}{n^2}\bbbclr{\frac{4m}{n}+1}\\
  &\qquad 
  \leq \frac{\mu_{n,m}}{n}\frac{16m}{n^2}\bbbclr{\frac{4m}{n}+\frac{4n}{n}} \leq \frac{\mu_{n,m}}{n}\frac{64(m+n)}{n^3}.
}
Using the estimates from \eqref{100}
and \eqref{101} in the difference \eqref{99}, and then applying that result and \eqref{98} in 
\eqref{94} yields the claim. 
\end{proof}

\begin{lemma}\label{lem7} 
There exist universal integers~$m_0$ and~$n_0$, and positive constants~$C_0$ and~$c_0$ such that, whenever
\ben{\label{102}
	n\geq n_0 \quad\text{and}\quad
	m_0 \leq m\leq c_0 n^{3/2},
}
we have
\ben{\label{103}
	\bbabs{\frac{\mu_{n,m}}{ne^{-2m/n}} - 1} \leq C_0\bbclr{\frac{m}{n^2} + \frac{m^2}{n^3}}
}
and 
\ben{\label{104}
	\bbabs{\frac{\sigma_{n,m}^2}{n\phi(2m/n)} -1 }\leq C_0\bbclr{\frac{1}{m}+\frac{m^2}{n^3}},
}
where
\be{ 
  \phi(x) = e^{-x}(1-e^{-x}(1+x)).
}
\end{lemma}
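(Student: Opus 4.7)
The plan is to bootstrap both estimates from the two-sided bounds on $\mu_{n,m}$ and $\sigma_{n,m}^2$ already established in Lemma~\ref{lem6}. First I would choose $n_0 \geq 6$, $m_0 \geq 1$, and $c_0 > 0$ small enough that the hypothesis $m \leq n^2/4 - 3n/2$ of Lemma~\ref{lem6} holds whenever \eqref{102} is satisfied, and so that $8m(m+n)/n^3 \leq 1$ on this range (using $m \leq c_0 n^{3/2}$ with $c_0$ small).

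For \eqref{103}, the bound \eqref{91} gives $\mu_{n,m}/(ne^{-2m/n}) \in [\exp(-8m(m+n)/n^3), 1]$. Since $1 - e^{-y} \leq y$ for $y \geq 0$, this immediately yields
\[
\left|\frac{\mu_{n,m}}{ne^{-2m/n}} - 1\right| \leq \frac{8m(m+n)}{n^3} \leq \frac{8m}{n^2} + \frac{8m^2}{n^3},
\]
proving \eqref{103} for any $C_0 \geq 8$.

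The substantive step is \eqref{104}. Writing $p = e^{-2m/n}$ and $x = 2m/n$, I would rewrite \eqref{92} in the form
\[
\sigma_{n,m}^2 = \mu_{n,m}\bigl(1 - (\mu_{n,m}/n)(1+x)\bigr) + R, \qquad |R| \leq C\mu_{n,m}^2 m(m+n)/n^4,
\]
for an absolute constant $C$. Substituting $\mu_{n,m} = np(1 + \eps_1)$ with $|\eps_1| \leq C_0(m/n^2 + m^2/n^3)$ from \eqref{103} and expanding, the $\eps_1$-independent contribution collapses to $n\phi(2m/n)$, leaving
\[
\sigma_{n,m}^2 - n\phi(2m/n) = np\eps_1\bigl(1 - 2p(1+x)\bigr) - np^2\eps_1^2(1+x) + R.
\]
Since $2p(1+x) = 2e^{-x}(1+x) \in [0,2]$ uniformly in $x \geq 0$, the three right-hand terms are bounded by absolute constant multiples of $np|\eps_1|$, $np|\eps_1|^2$, and $\mu_{n,m}^2 m(m+n)/n^4$, respectively.

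The main obstacle is converting these absolute bounds into a relative error of order $1/m + m^2/n^3$. For this I would invoke Lemma~\ref{lem4} to obtain $n\phi(2m/n) \geq ne^{-x}\min(x^2, 1)/4$, and then split into two regimes: $m \leq n/2$, where $n\phi(2m/n) \gtrsim e^{-2m/n} m^2/n$, and $m > n/2$, where $n\phi(2m/n) \gtrsim ne^{-2m/n}/4$. The elementary inequality $m/n^2 \leq 1/m + m^2/n^3$, valid for all $m,n\geq 1$ by splitting on $m \leq n$ versus $m > n$, combined with $\mu_{n,m} \leq np$ and the smallness of $|\eps_1|$ guaranteed by choosing $c_0$ small, reduces each of the three residual contributions divided by $n\phi(2m/n)$ to a constant multiple of $1/m + m^2/n^3$. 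The case bookkeeping is the only real work; the target rate is tuned precisely so as to absorb both the $|\eps_1|/\phi$-amplified error from \eqref{103} (yielding the $1/m$ term) and the remainder $R$ of Lemma~\ref{lem6} (yielding the $m^2/n^3$ term).
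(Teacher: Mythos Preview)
Your proposal is correct. The ingredients are the same as in the paper---the two-sided bounds \eqref{91} and \eqref{92} from Lemma~\ref{lem6}, the lower bound on $1-e^{-x}(1+x)$ from Lemma~\ref{lem4}, and a case split at $2m/n=1$---but the organization for \eqref{104} is genuinely different. You substitute $\mu_{n,m}=np(1+\varepsilon_1)$ directly into the variance sandwich \eqref{92}, expand the quadratic in $\varepsilon_1$, and bound each of the three resulting error terms against the lower bound on $n\phi(2m/n)$. The paper instead works multiplicatively: it first bounds the ratio $\overline\sigma^2_{n,m}/\underline\sigma^2_{n,m}$ of the upper and lower bounds in \eqref{92} (which requires a separate argument that the lower bound is positive), then bounds $\overline\sigma^2_{n,m}/(n\phi(2m/n))$ by a longer chain of manipulations, and finally combines the two ratios. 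Your additive expansion is shorter and sidesteps the positivity check; the paper's ratio approach keeps the $\mu$-error and the remainder $R$ from \eqref{92} more cleanly separated throughout. Either way the arithmetic in the two regimes $m\le n/2$ and $m>n/2$ is the unavoidable bookkeeping, and your observation that $m/n^2\le 1/m+m^2/n^3$ is exactly the device that makes the target rate absorb both error sources.
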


\begin{proof} It is easy to verify that 
\bea \label{105}
n^{3/2} \le \frac{n^2}{4}-\frac{3n}{2} \qmq{for all $n \ge 27$.}
\ena
Hence, with the first inequality in \eqref{102} holding with $n_0$ replaced by 27, and taking $c_0 \le 1$, Lemma~\ref{lem6} can be invoked to yield
\bes{
	\bbabs{\frac{\mu_{n,m}}{ne^{-2m/n}}-1 }
	& \leq1-\exp\bbbclr{-\frac{8m(m+n)}{n^3}}\leq \frac{8m(m+n)}{n^3},
}
from which \eqref{103} now follows for any $C_0 \ge 8$.

Turning to \eqref{104}, we first show that the lower bound in \eqref{92} is positive whenever $n \ge 78$ and $m \ge 78$. Indeed, that lower bound is positive whenever
\be{ 
	\frac{\mu_{n,m}}{n}\bbbclr{1+\frac{2m}{n}+\frac{78m(m+n)}{n^3}} < 1,
}
which, recalling the upper bound \eqref{91}, is implied whenever
\ben{\label{106}
	e^{-x}(1+x+y) < 1
}
with
\ben{\label{107}
	x = \frac{2m}{n}\qquad \text{and}\qquad y=\frac{78m(m+n)}{n^3}.
}
Since \eqref{106} is equivalent to the inequality~$y < e^x - x - 1$, which in turn is satisfied if~$y\leq x^2/2$, since~$x^2/2< e^x - x - 1$, we arrive at the sufficient condition 
\be{ 
	\frac{78m(m+n)}{n^3} \leq \frac{2m^2}{n^2},
}
which is equivalent to~$39\leq m(2-39/n)$. This inequality holds whenever both $n \ge 78$ and $m \ge 78$. 

We now proceed to bound the ratio between the upper and lower bounds, say $\overline{\sigma}^2_{n,m}$ and $\underline{\sigma}^2_{n,m}$, respectively, of \eqref{92}. Using the identity~$(1-a)/(1-b)=1+(b-a)/(1-b)$, we have
\ben{\label{108}
\frac{\overline{\sigma}^2_{n,m}}{\underline{\sigma}^2_{n,m}}=	\frac{1-\frac{\mu_{n,m}}{n}\bbclr{1+\frac{2m}{n}-\frac{48m(m+n)}{n^3}}}{1-\frac{\mu_{n,m}}{n}\bbclr{1+\frac{2m}{n}+\frac{78m(m+n)}{n^3}}}
	 = 1+     \frac{\frac{\mu_{n,m}}{n}{\frac{126m(m+n)}{n^3}}}{1-\frac{\mu_{n,m}}{n}\bbclr{1+\frac{2m}{n}+\frac{78m(m+n)}{n^3}}}.
}
We proceed to lower bound the denominator in \eqref{108}. Letting~$x$ and $y$ be as in \eqref{107}, and applying the upper bound in \eqref{91}, we may write
\bes{
	& 1-\frac{\mu_{n,m}}{n}\bbclr{1+\frac{2m}{n}+\frac{78m(m+n)}{n^3}}\\
	&\qquad\geq
	1-e^{-2m/n}\bbclr{1+\frac{2m}{n}+\frac{78m(m+n)}{n^3}}\\
	& \qquad= 1-e^{-x}(1+x+y) \geq 1-e^{-x}(1+x) -y.
}
If~$2m/n\leq 1$, we have~$0\leq x\leq 1$ and thus~$1-e^{-x}(1+x) \ge x^2/4$ from Lemma~\ref{lem4}, so that
\be{
1-e^{-x}(1+x) -y \ge	\frac{x^2}{4} - y = \frac{m^2}{n^2}\bbclr{1-\frac{78}{n}-\frac{78}{m}}
	\geq \frac{1}{8}\bbclr{\frac{2m}{n}}^2
}
when~$\min(n,m) \ge 312$. If~$2m/n > 1$ and so~$x>1$, we simply use the lower bound
\be{
	1-e^{-x}(1+x) \geq \frac{1}{4},
}
and for any positive~$c_0$ we can take $n_0$ large enough so that
\be{
	1-e^{-x}(1+x)-y\geq \frac{1}{4} - y = \frac{1}{4} - \frac{78m^2}{n^3} - \frac{78m}{n^2}\geq \frac{1}{8}.
}
Hence, writing $\bigo(\cdot)$ with the understanding that the implied bound holds with universal constants, recalling \eqref{108}, and using Lemma~\ref{lem6} to bound $\mu_{n,m}/n$ in its numerator, we have
\ben{\label{109}
\frac{\overline{\sigma}^2_{n,m}}{\underline{\sigma}^2_{n,m}} \le	1+ \frac{8e^{-2m/n}}{\bclr{\frac{2m}{n}\wedge 1}^2}
	\bbclr{\frac{126m^2}{n^3}+\frac{126m}{n^2}} 
  = \begin{cases}
	1+ \displaystyle\bigo\bbclr{\frac{1}{m}} & \text{if~$2m/n\le 1$}\\[3ex]
	1+	\displaystyle\bigo\bbclr{\frac{m^2}{n^3}} & \text{if~$2m/n>1~$,}
	\end{cases}
}
where both the $\bigo(\cdot)$ terms are non-negative.

Next, with $\phi(x) = e^{-x}(1-e^{-x}(1+x))$, we show that
\besn{ \label{110}
\frac{\overline{\sigma}^2_{n,m}}{n\phi(2m/n)}
	& = \begin{cases}
		\displaystyle1+\bigo\bbclr{ \frac{1}{m}}&\text{if~$2m/n \le 1$}\\[3ex]
		\displaystyle1+\bigo\bbclr{\frac{m^2}{n^3}}&\text{if~$2m/n > 1$.} 
	\end{cases}
}
Using \eqref{103} for the second equality, \eqref{91} for the third, then \eqref{103} again and the lower bound of Lemma \ref{lem4} for the fourth, we obtain
\bes{
	&\frac{\overline{\sigma}^2_{n,m}}{n\phi(2m/n)}\\
	&\quad=\frac{\mu_{n,m}}{n e^{-2m/n}} \times
	\left[ \frac{{\displaystyle 1-\frac{\mu_{n,m}}{n}\bbclr{1+\frac{2m}{n}+\bigo\bbclr{\frac{m}{n^2}+\frac{m^2}{n^3}}}}}{{\displaystyle 1-e^{-2m/n}\bbclr{1+\frac{2m}{n}}}} \right]\\
	&\quad= \bbclr{1+\bigo\bbclr{\frac{m}{n^2}+\frac{m^2}{n^3}}} \\
	&\qquad\times\bbbbcls{1-\frac{\displaystyle\frac{\mu_{n,m}}{n}\bbclr{1+\frac{2m}{n}+\bigo\bbclr{\frac{m}{n^2}+\frac{m^2}{n^3}}}-e^{-2m/n}\bbclr{1+\frac{2m}{n}}}{\displaystyle1-e^{-2m/n}\bbclr{1+\frac{2m}{n}}}}\\
	&\quad= \bbclr{1+\bigo\bbclr{\frac{m}{n^2}+\frac{m^2}{n^3}}} \\
	&\qquad\times\bbbbcls{1
		-\frac{\displaystyle\bbclr{\frac{\mu_{n,m}}{n}-e^{-2m/n}}\bbclr{1+\frac{2m}{n}}}{\displaystyle1-e^{-2m/n}\bbclr{1+\frac{2m}{n}}}
		+\bigo\bbbbclr{\frac{\displaystyle e^{-2m/n}\bbclr{\frac{m}{n^2}+\frac{m^2}{n^3}}}{1- e^{-2m/n}(1+\frac{2m}{n})}}}\\ 
	&\quad= \bbclr{1+\bigo\bbclr{\frac{m}{n^2}+\frac{m^2}{n^3}}} \\
	&\qquad\times\bbbbcls{1
		+\bigo\bbbbclr{\frac{\displaystyle e^{-2m/n}\bbclr{\frac{m}{n^2}+\frac{m^2}{n^3}}\bbclr{1+\frac{2m}{n}}}{\bclr{\frac{2m}{n}\wedge 1}^2}}
		+\bigo\bbbbclr{\frac{\displaystyle e^{-2m/n}\bbclr{\frac{m}{n^2}+\frac{m^2}{n^3}}}{\bclr{\frac{2m}{n}\wedge 1}^2}}
	}\\
    &\quad =:(1+R_1)(1+R_2+R_3)=\bigo((1+R_1)(1+R_2)),
}
as $R_3 = \bigo(R_2)$. In the case $2m/n\leq1$, we have
\bes{
R_1&=\bigo \left( \frac{m}{n^2}+\frac{m^2}{n^3}\right) = \bigo\left( \frac{1}{m}\bbclr{\frac{m^2}{n^2}+\frac{m^3}{n^3}}\right) = \bigo\bbclr{\frac{1}{m}} \qm{and}\\
R_2&=\bigo\left(\frac{{m}/{n^2}+{m^2}/{n^3}}{\left( {2m}/{n} \right)^2} \right)=\bigo\bbclr{ \frac{1}{m}+\frac{1}{n}}=\bigo\bbclr{\frac{1}{m}},
}
showing the first bound in \eqref{110}. In the case $2m/n>1$,
\bes{
R_1=\bigo\left( \frac{m}{n^2}+\frac{m^2}{n^3}\right) = \bigo\bbclr{\frac{m^2}{n^3}},
}
and using that $x\exp(-x)$ is bounded over $[0,\infty)$, 
\bes{
	R_2 & =\bigo\bbbclr{\displaystyle e^{-2m/n}\bbclr{\frac{m}{n^2}+\frac{m^2}{n^3}}\bbclr{1+\frac{2m}{n}}}
	=\bigo\bbbclr{\displaystyle \bbclr{\frac{m^2}{n^3}}e^{-2m/n}\bbclr{\frac{2m}{n}}}=\bigo\bbclr{\frac{m^2}{n^3}}.
}
Applying \eqref{105}, the second bound in \eqref{110} is shown. Now, using that $\overline{\sigma}^2_{n,m}\ge \underline{\sigma}^2_{n,m}$, and writing
\be{
    \frac{\overline{\sigma}^2_{n,m}}{\underline{\sigma}^2_{n,m}} = 1+a,
    \qquad
    \frac{\overline{\sigma}^2_{n,m}}{n\phi(2m/n)} = 1+b
}
and observing that, because the implicit constants in the bounds \eqref{109} and \eqref{110} are universal, and using that the $\bigo(\cdot)$ terms in  \eqref{109} are non-negative, we can choose $c_0$ small enough and $m_0$ large enough to guarantee that $0\leq a<1$ and $-1<b<1$, and hence obtain the upper and lower bounds
	\bes{    
	&(1-a)(1+b)\leq \bbclr{1-\frac{a}{1+a}}(1+b)=\frac{\underline{\sigma}^2_{n,m}}{\overline{\sigma}^2_{n,m}} \frac{\overline{\sigma}^2_{n,m}}{n\phi(2m/n)} = \frac{\underline{\sigma}^2_{n,m}}{n\phi(2m/n)}\\
	&\qquad \leq \frac{\sigma^2_{n,m}}{n\phi(2m/n)}  \leq \frac{\overline{\sigma}^2_{n,m}}{n\phi(2m/n)} = 1+b,
}
from which the estimate \eqref{104} follows.

\end{proof}

\begin{lemma} \label{lem8} 
	Let~$r_{n,m}$ be defined as in \eqref{36}. For any integers~$\overline{n}$ and~$\overline{m}$ and any positive constant~$\overline{c}>0$, there exists~$\overline{r} \ge 1$ such that~$r_{n,m}>\overline{r}$ implies
\ben{\label{111}
n\geq\overline{n}  \quad\text{and}\quad \overline{m}\leq m\leq \overline{c}n^{3/2}.
}
\end{lemma}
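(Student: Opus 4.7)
The plan is to prove the contrapositive: if any of the three conclusions in \eqref{111} fails, then $r_{n,m}$ is bounded by a universal constant (depending only on $\overline{n},\overline{m},\overline{c}$). Then it suffices to take $\overline{r}$ to be the maximum of the resulting three bounds (and at least $1$).

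The key input is Lemma~\ref{lem5}, which gives $\sigma_{n,m}^2\le\min\{\mu_{n,m},2m\}$. Combining these two bounds I would derive the basic estimate
\be{
r_{n,m} \;=\; \frac{\sigma_{n,m}^3}{\mu_{n,m}\bclr{1+m^2/n^2}}
\;\le\; \frac{\sigma_{n,m}}{1+m^2/n^2}
\;\le\; \frac{\sqrt{2m}}{1+m^2/n^2}.
}
This single inequality handles two of the three cases cleanly. First, if $m<\overline m$, I drop the denominator to obtain $r_{n,m}\le\sqrt{2\overline m}$. Second, if $m>\overline c\,n^{3/2}$, I drop the $1$ in the denominator to obtain
\be{
r_{n,m}\;\le\;\frac{\sqrt{2m}\,n^2}{m^2}\;=\;\frac{\sqrt{2}\,n^2}{m^{3/2}}\;<\;\frac{\sqrt{2}}{\overline c^{\,3/2}\,n^{1/4}}\;\le\;\frac{\sqrt{2}}{\overline c^{\,3/2}\,3^{1/4}},
}
using $n\ge 3$ from the definition of $\Theta$ in \eqref{35}.

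For the remaining case $n<\overline n$, I would argue simply that the set $\{(n,m)\in\Theta:n<\overline n\}$ is finite (there are only finitely many pairs with $3\le n<\overline n$ and $0<m<\binom{n}{2}$), and by the convention $r_{n,m}=0$ when $\sigma_{n,m}=0$, $r_{n,m}$ takes only finitely many finite values on this set, hence is bounded by some constant $B_1=B_1(\overline n)$. Taking $\overline r = 1\vee B_1\vee \sqrt{2\overline m}\vee \sqrt{2}/(\overline c^{\,3/2}3^{1/4})$ completes the proof, since any $(n,m)$ with $r_{n,m}>\overline r$ must satisfy all three conditions of \eqref{111} simultaneously.

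There is no genuine obstacle: the argument is just a careful bookkeeping exercise with the variance bound of Lemma~\ref{lem5}. The only mildly delicate point is the handling of the $n<\overline n$ case, where the finiteness of the parameter set, together with the convention that $r_{n,m}=0$ when $\sigma_{n,m}=0$, provides a trivial uniform bound without needing further estimates.
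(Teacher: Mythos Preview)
Your proof is correct and follows essentially the same contrapositive strategy as the paper, using Lemma~\ref{lem5} to bound $r_{n,m}\le \sigma_{n,m}/(1+m^2/n^2)$ and then handling each failed inequality separately. The only notable difference is in the case $n<\overline n$: the paper uses the direct estimate $r_{n,m}\le\sigma_{n,m}\le\sqrt{\mu_{n,m}}\le\sqrt{n}<\overline n^{1/2}$ to get an explicit bound, whereas you invoke finiteness of $\{(n,m)\in\Theta:n<\overline n\}$; both are valid, though the paper's version yields a concrete $\overline r$ while yours is nonconstructive.
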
 
\begin{proof} We will show that $r_{n,m} \le \overline{r}$ for $\overline{r} = \max\bclc{\overline{n}^{1/2}, (2\overline{m})^{3/2}, 1/\overline{c}^2,1}$ if \eqref{111} is violated.
Indeed, if~$n < \overline{n}$, we have by Lemma~\ref{lem5}, and that $\mu_{n,m} \le n$, then 
\be{
 r_{n,m} =  \frac{\sigma_{n,m}^3}{\mu_{n,m}(1+(m/n)^2)} \leq \sigma_{n,m} \le  \min\{\overline{n}^{1/2},(2 \overline{m})^{1/2}\}.
}
Finally, if~$m> \overline{c}n^{3/2}$, then similarly
\bes{
  r_{n,m} = \frac{\sigma_{n,m}^3}{\mu_{n,m} \left(1+(m/n)^2 \right) }
  \leq \frac{\sqrt{\mu_{n,m}}}{1+(m/n)^2} \\
    \leq \frac{\sqrt{n}}{1+(\overline{c}n^{3/2}/n)^2}
  = \frac{\sqrt{n}}{1+(\overline{c}\sqrt{n})^2} \leq \frac{1}{\overline{c}^2}.\qedhere
}
\end{proof}

\begin{lemma} \label{lem9}
Letting~\Goodset be as in Condition~\eqref{6}, it holds that
\beas 
	\sup_{\substack{(n,m) \in \Goodset\\0\leq d\leq \min\{n,m\}/4}} \left( \frac{\mu_{n,m}^2}{\mu_{n-1,m-d}^2}
	\vee
	\frac{\mu_{n-1,m-d}^2}{\mu_{n,m}^2}
	\right)
	< \infty,
\enas
and
\bea \label{112}
		\sup_{\substack{(n,m) \in \Goodset\\0\leq d\leq \min\{n,m\}/4}} \left( \frac{\sigma_{n,m}^2}{\sigma_{n-1,m-d}^2}
		\vee
		\frac{\sigma_{n-1,m-d}^2}{\sigma_{n,m}^2}
		\right)
		< \infty.
\ena
\end{lemma}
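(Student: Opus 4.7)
My plan for proving Lemma~\ref{lem9} is to reduce all four ratios to bounded perturbations of the leading-order expressions supplied by Lemma~\ref{lem7}. First, I would verify that the hypothesis of Lemma~\ref{lem7} holds at both $(n,m)$ and $(n-1,m-d)$: the choices \eqref{46} ensure $n \ge 2n_0$ and $m \ge 2m_0$, so for $d \le \min(n,m)/4$ we obtain $n-1 \ge n_0$ and $m-d \ge 3m/4 \ge m_0$; and since $\overline{c} \le c_0/2$ and $(n/(n-1))^{3/2}$ is bounded for $n \ge \overline{n}$, we also have $m-d \le \overline{c}\, n^{3/2} \le c_0(n-1)^{3/2}$. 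Moreover, the error quantities $m/n^2 + m^2/n^3$ and their analogues at $(n-1,m-d)$ are uniformly bounded on $\Goodset$, and the choice $\overline{c} \le 1/(3C_0^{1/2})$ keeps the error factors in \eqref{103} and \eqref{104} bounded above and uniformly bounded away from $0$.

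Next I would set $x = 2m/n$ and $y = 2(m-d)/(n-1)$ and establish that these are comparable. The identity $y-x = 2(m-nd)/(n(n-1))$ together with $m \le \overline{c}\, n^{3/2}$ and $d \le n/4$ gives $|y-x| \le 2\overline{c}/\sqrt{n} + n/(2(n-1)) \le C$ for a universal $C$, while $y/x = (n/(n-1))(1 - d/m) \in [3/4,2]$ thanks to $n \ge 344$ and $d \le m/4$. With this comparability in hand, $\mu_{n,m}/\mu_{n-1,m-d}$ reduces via Lemma~\ref{lem7} to $(n/(n-1))\, e^{y-x}$ times a bounded quotient of error factors, which is bounded; the reciprocal is handled symmetrically.

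For the variance ratio I would write
\be{
\frac{\phi(x)}{\phi(y)} = e^{y-x} \cdot \frac{1 - e^{-x}(1+x)}{1 - e^{-y}(1+y)},
}
with $\phi(t)=e^{-t}(1-e^{-t}(1+t))$. The exponential factor is bounded by $e^{|y-x|}$, and Lemma~\ref{lem4}'s two-sided estimate $1 - e^{-t}(1+t) \asymp \min(t^2, 1)$ combined with $y \asymp x$ shows, via a short case split on whether $x \le 1$ or $x > 1$, that the rational factor is universally bounded above and below. Multiplying through by the bounded ratio of error factors from \eqref{104} then yields \eqref{112}.

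The main obstacle is that $x = 2m/n$ is \emph{not} uniformly bounded on $\Goodset$: it can be as small as order $1/\sqrt{n}$ (when $m$ is near its cutoff $\overline{m}$ and $n \to \infty$) or as large as order $\sqrt{n}$ (when $m \asymp n^{3/2}$), so $\phi(x)/\phi(y)$ could a priori blow up in either extreme. The argument is saved by the scaling $m \le \overline{c}\, n^{3/2}$ built into $\Goodset$, which forces $|y-x|$ to stay bounded even when $x$ is large, together with the matching two-regime $\min(t^2, 1)$ behavior in Lemma~\ref{lem4}, which is precisely what one needs to extract a bounded ratio from the comparability $y \asymp x$ at both ends of the range.
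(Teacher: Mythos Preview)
Your proposal is correct and follows essentially the same route as the paper: verify that Lemma~\ref{lem7} applies at both $(n,m)$ and $(n-1,m-d)$ via the cushion built into \eqref{46}, reduce the variance ratio to a bounded multiplicative error times $\phi(x)/\phi(y)$, and then control that last ratio by showing $|x-y|$ is bounded and $y/x\in[3/4,2]$ so that Lemma~\ref{lem4} finishes the job. The paper differs only cosmetically---it handles the mean ratio directly through the exponential bounds of Lemma~\ref{lem6} rather than through Lemma~\ref{lem7}, and it names the four factors $R_1,\dots,R_4$ explicitly---but the substance is the same.
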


\begin{proof} First, note that if~$(n,m)\in\Goodset$, then from \eqref{46} and \eqref{47} the conclusion of Lemma \ref{lem7} holds. For the ratio of means, from Lemma  \ref{lem6}, to upper bound $\mu_{n,m}/\mu_{n-1,m-d}$ it suffices to upper bound the ratio
\begin{align*}
&-\frac{2m}{n}+\frac{2(m-d)}{n-1}+\frac{8(m-d)(m-d+n-1)}{(n-1)^3}\\ &\qquad=\frac{2m-2nd}{n(n-1)}+\frac{8(m-d)(m-d+n-1)}{(n-1)^3}\le 8 \left( \frac{2m}{n^2}+\frac{8m(m+n)}{n^3}\right), 
\end{align*}
which is bounded by a constant via $m \le c_0 n^{3/2}$, as in  \eqref{102}. Similarly,  to upper bound $\mu_{n-1,m-d}/\mu_{n,m}$ it suffices to upper bound the ratio
\begin{align*}
&-\frac{2(m-d)}{n-1}+\frac{2m}{n}+\frac{8m(m+n)}{n^3} \\ &\qquad=\frac{2nd-2m}{n(n-1)}+\frac{8m(m+n)}{n^3} \le 2\left(\frac{2d}{n}+\frac{8m(m+n)}{n^3}  \right) , 
\end{align*}
which, here using that $d \le n/4$, we see is also so bounded.

For the ratios of variances, for $0 \le d \le \min \{n,m\}/4$ let 
\be{
  x = \frac{2m}{n},\qquad y = \frac{2m}{n} - \frac{2(m-d)}{n-1} = \frac{2(nd-m)}{n(n-1)},
}
let~$\phi(x) = e^{-x}(1-e^{-x}(1+x))$, and write
\be{
		\frac{\sigma_{n,m}^2}{\sigma_{n-1,m-d}^2} = \frac{\sigma_{n,m}^2}{n\phi(x)}\times\frac{(n-1)\phi(x-y)}{\sigma_{n-1,m-d}^2}
		\times\frac{n}{n-1}\times\frac{\phi(x)}{\phi(x-y)} =: R_1\times R_2\times R_3\times R_4.
}
We show that these four terms, and their reciprocals, can be uniformly bounded over the range of the supremum in \eqref{112}.
Since \eqref{102} holds for~$n,$ and $m$, we can apply Lemma~\ref{lem7}, and also \eqref{46} for the first and final bounds, and obtain
\ben{\label{113}
		\frac{1}{2}\leq 1-C_0\bbclr{\frac{1}{m}+\frac{m^2}{n^3}} \le  \frac{\sigma_{n,m}^2}{n\phi(x)} \le  1+C_0\bbclr{\frac{1}{m}+\frac{m^2}{n^3}}
    \leq \frac{3}{2}.
}

Next, since~$m\geq 2m_0$ by \eqref{46}
and~$d\leq m/4$, we have that~$m-d\geq 3m/4\geq 3m_0/2\geq m_0$. Since~$n\geq 2n_0$, again by \eqref{46}, we have that~$n-1\geq n_0$, and since~$m\leq (c_0/2) n^{3/2}$  by \eqref{46} and~$(n/(n-1))^{3/2}\leq 2$ for~$n\geq 3$, we have that~$m-d\leq m\leq c_0 (n-1)^{3/2}$. It follows that~$(m-d,n-1)$ also satisfies the hypotheses of Lemma \ref{lem7}. Using the lower bound on $\overline{n}$ from  \eqref{46}, we have $1/(n-1)^3\leq 2/n^3$, and also from \eqref{47} that $C_0 (2/m+2m^2/n^3) \le 1/2$, so also using $d \le m/4$ for the second and second to last inequality,
\besn{ \label{114}
		\frac{1}{2}&\leq 1-C_0\bbclr{\frac{2}{m}+\frac{2m^2}{n^3}}\leq 1-C_0\bbclr{\frac{1}{m-d}+\frac{(m-d)^2}{(n-1)^3}} \\
        &\kern1em\le  \frac{\sigma_{n-1,m-d}^2}{(n-1)\phi(x-y)}\\
        &\kern4em\leq 1+C_0\bbclr{\frac{1}{m-d}+\frac{(m-d)^2}{(n-1)^3}} \le  1+C_0\bbclr{\frac{2}{m}+\frac{2m^2}{n^3}}
    \leq \frac{3}{2}.
}
Hence, \eqref{113} and \eqref{114} imply that
\be{ 
  \frac{1}{2}\leq R_1\leq \frac{3}{2} \qmq{and}
  \frac{2}{3}\leq R_2\leq 2.
}
Clearly,~$1 \le R_3 \le 2$ for~$n \ge 2$. Lastly, 
\beas
R_4 =\frac{e^{-x}(1-e^{-x}(1+x))}{e^{-x+y}(1-e^{-x+y}(1+x-y))} =\frac{1-e^{-x}(1+x)}{e^y(1-e^{-x+y}(1+x-y))}.
\enas
Note that by \eqref{47}, and by \eqref{46} that gives that $\overline{c} \le 1$, and also using $d \le n/4$,
\begin{align}\label{115}
 - \frac{4}{n_0^{1/2}} \le  - \frac{4}{n^{1/2}}\le - \frac{2m}{n(n-1)} \le  y = \frac{2(nd-m)}{n(n-1)}\leq\frac{2d}{n-1} \le 1.
\end{align}

It follows that~$1/e^y$ remains bounded on~\Goodset, and therefore, to show $R_4$ is bounded it suffices to show that
\beas
\frac{1-e^{-x}(1+x)}{1-e^{-x+y}(1+x-y)}
\enas
remains bounded. Using Lemma~\ref{lem4},
\beas
\frac{1-e^{-x}(1+x)}{1-e^{-x+y}(1+x-y)} 
\le \frac{2\min\{x^2,2\}}{\min\{(x-y)^2,1\}}
=\frac{2\min\{x^2,2\}}{\min\{x^2(1-y/x)^2,1\}}.
\enas
But this ratio remains bounded from above, away from 1, as  $d  \le m/4$ implies
\be{
  \frac{y}{x}
  =\frac{nd-m}{m(n-1)} \leq \frac{2d}{m}-\frac{1}{n-1} \le \frac{1}{2}.
}
	
The reciprocal~$1/R_4$ is bounded similarly,  using that \eqref{115} shows that $e^y$ is bounded. 
\end{proof}

\section{Jack Measure on Tableaux}\label{sec3} 
We now turn to the study of the distribution of the standardized sum of the $\alpha$-contents over all boxes in a tableaux whose shape is determined by the partition~$\Lambda_n$ of $n$, that is, to
\bea \label{def2}
W = \frac{Y}{\sqrt{\alpha {n \choose 2}}}, \qmq{where} Y = \sum_{x \in
	\Lambda_n} c_{\alpha}(x),
\ena
where
\[ c_{\alpha}(x) = \alpha(\mbox{column number of $x - 1$}) - (\mbox{row number of $x - 1$}),\]
and where the partition $\Lambda_n$ is sampled from the Jack$_{\alpha}$ measure in \eqref{37}, as described in detail in the introduction; see  \eqref{38} for an illustration of $c_\alpha(x)$, where $x \in \Lambda_7$.

Our bound is based on the zero bias construction in \cite{FuGo11}, which itself depends on an exchangeable pair constructed using Kerov's growth process, a sequential procedure for growing a random partition distributed according to Jack$_\alpha$ measure.

The state of Kerov's growth process at times~$n=1,2,\ldots$ is a partition of~$n$, starting at time 1 with the unique partition (1) of 1. To describe its transition rule from time~$n-1$ to~$n$ for~$n \ge 2$, given a box~$x$ in the diagram  of a partition~$\Lambda_n$ of~$n$,
let $a(x)$ denote the number of boxes in the same
row of $x$ and to the right of $x$ (the ``arm'' of $x$), and let $l(x)$
denote the number of boxes in the same column of $x$ and below $x$
(the ``leg'' of $x$),
as in \eqref{37}. Now set
\beas c_{\Lambda}(\alpha) =
\prod_{x \in \Lambda} (\alpha a(x) + l(x) +1), \quad
c_{\Lambda}'(\alpha) = \prod_{x \in \Lambda} (\alpha a(x) +
l(x) + \alpha)
\enas
and, for~$\Lambda_{n-1}$ a partition of~$n-1$ obtained from~$\Lambda_n$ by removing a single corner box, let
\bes{
  & \psi_{\Lambda_n/\Lambda_{n-1}}'(\alpha) \\
  & \quad= \prod_{x \in
	C_{\Lambda_n/\Lambda_{n-1}}-R_{\Lambda_n/\Lambda_{n-1}}} \frac{(\alpha a_{\lambda}(x) +
	l_{\lambda}(x)+1)}{(\alpha a_{\lambda}(x) + l_{\lambda}(x)+\alpha)}
\frac{(\alpha a_{\Lambda_{n-1}}(x) + l_{\Lambda_{n-1}}(x)+\alpha)}{(\alpha a_{\Lambda_{n-1}}(x) +
	l_{\Lambda_{n-1}}(x)+1)},
}
where~$C_{\Lambda_n/\Lambda_{n-1}}$ is the union of columns of~$\Lambda_n$ that
intersect~$\Lambda_n-\Lambda_{n-1}$ and~$R_{\Lambda_n/\Lambda_{n-1}}$ is the union of rows of~$\Lambda_n$ that intersect~$\Lambda_n-\Lambda_{n-1}$.
If at stage~$n-1$ the state of the process is the partition~$\Lambda_{n-1}$, a transition to the partition~$\Lambda_n$ occurs with probability
\[ \frac{c_{\Lambda_{n-1}}(\alpha)}{c_{\Lambda_n}(\alpha)} \psi_{\Lambda_n/\Lambda_{n-1}}'(\alpha).\] It is shown in \cite{Ke69}, see also \cite{F4}, that if~$\Lambda_{n-1}$ is distributed according to Jack$_{\alpha}$ measure on partitions of~$n-1$, then the partition~$\Lambda_n$ obtained by this process at time~$n$ has the Jack$_{\alpha}$ distribution.

In the proof of Theorem 3.1 of \cite{FuGo11}, a variable having the zero bias distribution of $W$ was constructed as follows. Fix $n$ and $\alpha$ and let~$\Lambda_k$ be the state of Kerov's growth process at time~$k$, and set
\ben{\label{116}
  V=\sum_{x\in\Lambda_{n-1}}c_\alpha(x).
}
Denoting by~$c_\alpha(x_n)$ the content of the box $x_n$ added at time~$n$ to form~$\Lambda_n$, we can now write
\bea \label{117}
W = \frac{V}{\sqrt{\alpha {n \choose 2}}} + T, \qmq{where} T= \frac{c_\alpha(x_n)}{\sqrt{\alpha {n \choose 2}}}.
\ena
With~$dF(t|\Lambda_{n-1})$ the conditional distribution of~$T$ given~$\Lambda_{n-1}$, constructing the pair
\bea \label{118}
(T^\dagger, T^\ddagger) \sim (t''-t')^2 dF(t'|\Lambda_{n-1})dF(t''|\Lambda_{n-1})
\ena
on the same space as~$\Lambda_{n-1}$, and letting $U \sim \mathcal{U}[0,1]$ be independent of $V, T^\dagger$ and $T^\ddagger$,
the variable
\bea \label{119}
W^*=\frac{V}{\sqrt{\alpha {n \choose 2}}}  +T^* \qm{with~$T^*=UT^\dagger+(1-U)T^\ddagger$}
\ena
has the~$W$-zero bias distribution. In fact, the joint distribution on the right hand side of \eqref{118} can be achieved by 
running Kerov's growth process twice, conditionally independent on~$\Lambda_{n-1}$. As shown in \cite{FuGo11}, the resulting variables, say~$T'$ and $T''$, yield the crucial exchangeable Stein pair in \eqref{3} via \eqref{117}. Again by \cite{FuGo11}, both the conditional mean and variance of~$T$ given~$\Lambda_{n-1}$ do not depend on~$\Lambda_{n-1}$; specifically, 
\bea \label{120}
  \IE\clc{T\given\Lambda_{n-1}}=0 \qmq{and} 
  \IE \clc{T^2\given\Lambda_{n-1}}=\frac{2}{n}.
\ena
It is essentially for this reason that we may construct~$W^*$ as in \eqref{119}, using~$V$; for details, see \cite{FuGo11}.
\begin{proof}[Proof of Theorem~\ref{thm4}]
We verify the conditions of Theorem \ref{thm2}.

\begin{description}[leftmargin=0em,parsep=\parskip,listparindent=\parindent]
\item[Condition~(\ref{23}).]
Fix an~$\epsilon \in (0,1)$, suppressed in the notation, and let
\bea \label{121}
\Theta=\{(n,\alpha): \alpha > n^{1+\epsilon}, n \ge 2\} \qmq{and} \Goodset=\{(n,\alpha) \in \Theta:r_{n,\alpha} > 2^{1/2-\epsilon/2}\},
\ena
where
\bea \label{122}
r_{n,\alpha} = \frac{n}{\sqrt{\alpha}},
\ena
which is positive and measurable. Note that
\ben{\label{123}
    (n,\alpha)\in\Goodset\quad\iff\quad n^{1+\eps}<\alpha<\frac{n^2}{2^{1-\eps}},
}
which implies in particular that $n\geq 3$ if $(n,\alpha)\in\Goodset$.

From \cite{F1}, the mean and variance of the content~$Y$ of a tableaux of a partition of~$n$ under Jack$_{\rm \alpha}$ measure is given, respectively, by
\bea \label{124}
\mu_{n,\alpha} =0 \qmq{and} \sigma_{n,\alpha}^2 = \alpha {n \choose 2} \qmq{for all~$({n,\alpha}) \in \Theta$.}
\ena
In particular we have that $\Var_{n,\alpha} Y>0$ for all $(n,\alpha) \in \Goodset$.

\item[Condition~(\ref{24}).]  The variable $Y$, given in \eqref{def2} is easily seen to satisfy the needed conditions, and the construction of the zero bias variable~$W^*$ is outlined above in \eqref{117}, \eqref{118} and \eqref{119}.

\item[Condition~(\ref{25}).] From \eqref{117} and \eqref{119} we see that 
\beas 
D=T^*-T.
\enas 
For each~$({n,\alpha}) = \Goodset$ let~${\cal F}_{n,\alpha}$ be the trivial~$\sigma$-algebra~$\{\emptyset, \Omega\}$, let
\bea \label{125}
\overline{D}=\frac{10\sqrt{\alpha}}{n \epsilon} \qmq{and let} F_{n,\alpha,1} = \left\{\lambda_1 \le \frac{2}{\epsilon} \right\},
\ena
where~$\lambda_1$ and $\lambda_1'$ respectively denote the length of the first row and first column of the tableaux~$\Lambda_{n-1}$ produced by Kerov's growth process at time~$n-1$.
Clearly $\overline{D}$ is $\mathcal{F}_{n,\alpha}$ measurable.

We next argue that~$|D| \le \overline{D}$ on~$F_{{n,\alpha},1}$ as follows. With $c_\alpha(x_n), c_\alpha(x_n')$ and $c_\alpha(x_n'')$ the contents of the boxes added to $\Lambda_{n-1}$ by Kerov's growth process, all conditionally independent given $\Lambda_{n-1}$, with probability one,  
\beas 
\{c_\alpha(x_n), c_\alpha(x_n'), c_\alpha(x_n'')\} \subset [-(\lambda_1'+1),\alpha (\lambda_1+1)],
\enas
as the extreme values~$\alpha (\lambda_1+1)$ and~$-(\lambda_1'+1)$ are achieved, respectively, by adding a box at the end of first row, and at bottom of the first column. Scaling by $\sigma_{n,\alpha}$ in \eqref{124} to obtain $T$, $T'$ and $T''$, respectively, with probability one 
\bea \label{126}
\{T, T', T''\} \subset [-(\lambda_1'+1)/\sigma_{n,\alpha},\alpha (\lambda_1+1)/\sigma_{n,\alpha}].
\ena
Now note that by \eqref{118} the distribution of $(T^\dagger, T^\ddagger)$ is absolutely continuous with respect to that of $(T',T'')$, and hence with probability one
\beas
\{T, T^\dagger, T^\ddagger\} \subset [-(\lambda_1'+1)/\sigma_{n,\alpha},\alpha (\lambda_1+1)/\sigma_{n,\alpha}].
\enas

As $T^*$ is the convex combination $UT^\dagger + (1-U)T^\ddagger$ of $T^\dagger, T^\ddagger$, it too must lie in this same interval, and hence, as the length of the first column of $\Lambda_{n-1}$ can be no more than $n$, we obtain
\besn{ \label{127}
  |D| & = |T^*-T| \le \frac{\alpha \lambda_1  + \lambda_1'+\alpha +1}{\sigma_{n,\alpha}}\\
  \le & \frac{2\alpha/\epsilon  + n+\alpha +1}{\sigma_{n,\alpha}}
\le \frac{5 \alpha}{\epsilon \sigma_{n,\alpha}} \le 
	\overline{D} \qm{on~$F_{{n,\alpha},1}$ for all $(n,\alpha) \in \Goodset$.}
}

In what follows, we think of $(n,\alpha)\in\Goodset$ as fixed and suppress the subscript in $\IE_{n,\alpha}$. Turning to the moment conditions, we claim that
\bea \label{128}
\sqrt{\IE D^2}  \le C\left(\frac{1}{\sqrt{n}} + \frac{\sqrt{\alpha}}{n} \right) \le C\frac{\sqrt{\alpha}}{n}.
\ena
 Now, 
\besn{\label{129}
	\sqrt{\IE D^2} & = \sqrt{\IE(T^*-T)^2} \\ 
  & \le \sqrt{2(\IE\{(T^*)^2\}+\IE T^2)}
	 \le \sqrt{2}\left(\sqrt{\IE\{(T^*)^2\}}+\sqrt{\IE T^2} \right).
}
To bound the second moment of $T^*$, by the zero bias formula \eqref{4} with~$f(x)=x^3/3$, and the proof of Theorem 4.1 in \cite{FuGo11}, we obtain
\beas
3\Var(T)\IE\{(T^*)^2\} = \IE T^4\le \frac{8}{n^2}+ \frac{4\alpha}{n^2(n-1)}.
\enas
Hence, by \eqref{120},
\be{
  \sqrt{\IE\{(T^*)^2\}} = \sqrt{\frac{n}{6}\left( \frac{8}{n^2}+ \frac{4\alpha}{n^2(n-1)}\right)}\le C \sqrt{\frac{1}{n}+\frac{\alpha}{n^2}} 
\le C \left(\frac{1}{\sqrt{n}}+ \frac{\sqrt{\alpha}}{n}\right).
}
 For the second term of \eqref{129}, by \eqref{120}, we obtain~$\sqrt{\IE T^2}=\sqrt{2/n}$, thus showing first inequality in \eqref{128}. The final inequality in \eqref{128} holds as $(n,\alpha) \in \Goodset$ implies $\alpha \ge n$.

To verify the first condition in \eqref{26}, apply the Cauchy Schwarz inequality,  \eqref{122} and \eqref{128} to obtain
\bea \label{130} r^2_{n,\alpha}\,\IE\bclc{|D|(1-I_{F_{n,\alpha,1}})} \le r_{n,\alpha}^2 \sqrt{\IE D^2  \IP[F_{n,\alpha,1}^c]} \le \frac{C n}{\sqrt{\alpha}}\sqrt{\IP[F_{n,\alpha,1}^c]}.
\ena
To control $\IP[F_{n,\alpha,1}^c]$, with $m=n-1$, we apply 
the inequality 
\beas 
\IP[\lambda_1 = l] \le \left( \frac{m}{\alpha} \right)^l \frac{\alpha l}{l!^2}
\enas
from the proof of Lemma 6.6 in \cite{F1}. Using that~$\alpha \ge  n^{1+\epsilon} \ge m^{1+\epsilon}$ in the third inequality below we obtain 
\bes{
  \IP[F_{n,\alpha,1}^c] & \le	\IP[\lambda_1 \ge 2/\epsilon] \le \sum_{l \ge 2/\epsilon}\left( \frac{m}{\alpha} \right)^l \frac{\alpha l}{l!^2} = \frac{\alpha}{m^2}  \sum_{l \ge 2/\epsilon} \frac{m^{l+2}}{\alpha^l} \frac{l}{l!^2} \\ 
  & \le \frac{\alpha}{m^2}  \sum_{l \ge 2/\epsilon} m^{2-l \epsilon} \frac{l}{l!^2}
	\le \frac{\alpha}{m^2}  \sum_{l \ge 2/\epsilon} \frac{l}{l!^2} \le \frac{\alpha}{m^2}\sum_{l \ge 0} \frac{l}{l!^2} \le \frac{e\alpha}{m^2}\le \frac{4e \alpha}{n^2}.
}
Substitution into \eqref{130} now verifies the first condition in \eqref{26}.

For the second condition in \eqref{26}, 
using \eqref{122}, the Cauchy Schwarz inequality,  that~$\IE W^2=1$,  \eqref{128} and  \eqref{125} we obtain
\beas 
\sup_{(n,\alpha) \in \Goodset} r_{n,\alpha} \IE \bclc{|DW|+\overline{D}} \le \sup_{n,\alpha \in \Goodset} \frac{n}{\sqrt{\alpha}} \left( \sqrt{\IE D^2} + \IE |\overline{D}| \right) < \infty.
\enas

\item[Condition~(\ref{27}).] For~$(n,\alpha ) \in \Goodset$, let
\bea \label{131}
\Psi(n,\alpha)=(n-1,\alpha),
\ena
which is~${\cal F}_{n,\alpha}$ measurable, let~$F_{n,\alpha,2}=\Omega$, and let $V$ be as in \eqref{116}. The conditional distribution condition \eqref{28} is satisfied for~$V$ with~$\theta=(n,\alpha)$ by the properties of Kerov's growth process. Clearly the set~$F_{n,\alpha,2}$ is measurable with respect to~${\cal F}_{n,\alpha}$. The moment condition \eqref{29} is trivially satisfied, as~$1-1_{F_{n,\alpha,2}}=0$ almost surely. 

\item[Condition~(\ref{30}).] By \eqref{def2} and \eqref{116} we have that~$(Y-V)/\sigma_{n,\alpha}=T$ as in \eqref{117}, the scaled content~$c_{\alpha}(x_n)$ of the box~$x_n$ added at time~$n$ in Kerov's growth process. Hence, the first part of Condition \eqref{31} holds with~$\overline{B}=\overline{D}$ in \eqref{125}, as by \eqref{126}, and arguing as in \eqref{127}, we have
\beas
\frac{|Y-V|}{\sigma_{n,\alpha}}=|T| \le \frac{\alpha \lambda_1  + \lambda_1'+\alpha +1}{\sigma_{n,\alpha}}
\le 
\overline{D}\qmq{on~$F_{n,\alpha,1}$ for $(n,\alpha) \in \Goodset$.}
\enas

The second part of this condition holds easily, as 
\beas
r_{n,\alpha}^2 (\overline{D}(\overline{B}+\overline{D})) = 2r_{n,\alpha}^2 \overline{D}^2 = 200/\epsilon^2 \qm{almost surely.}
\enas

\item[Condition~(\ref{18}).] To verify the variance ratio condition \eqref{19}, recalling~$\sigma_{n,\alpha}^2$ from
\eqref{124} and $\Psi(\alpha,n)$ from 
\eqref{131}, we have
\beas
\frac{\sigma_{\alpha,n}^2}{\sigma_{\Psi(\alpha,n)}^2} = \frac{\alpha {n \choose 2}}{\alpha {n-1 \choose 2}} = \frac{n}{n-2} \le 3 \qm{for all~$(n,\alpha) \in \Goodset$,}
\enas
as~$n \ge 3$ for all~$(n,\alpha) \in \ \Goodset$ by the comment after \eqref{123}. For this same reason 
condition \eqref{20} holds, as
\beas
\frac{r_{\alpha,n}}{r_{\Psi(\alpha,n)}} = \frac{r_{\alpha,n}}{r_{\alpha,n-1}} = \frac{n}{n-1}\in [1,3/2]. 
\enas

\end{description}

Conditions~\eqref{23}--\eqref{30} and \eqref{18} have been verified, and Theorem \ref{thm4} now follows from Theorem~\ref{thm2}.
\end{proof}

The next result shows that the case when~$\alpha$ is taken larger than that in Theorem~\ref{thm4} is degenerate; the boundary case~$\epsilon=1$ is left unresolved.

\begin{theorem} \label{thm5}
For all~$\epsilon>1$, along any sequence~$\{(n,\alpha_n), n  \ge 1\}$ for which~$\alpha_n \ge n^{1+\epsilon}$, 
\beas
\lim_{n \rightarrow \infty} \IP_{n,\alpha_n}[\lambda_1'=n] =1.
\enas
\end{theorem}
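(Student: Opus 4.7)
The plan is to observe that $\{\lambda_1' = n\}$ coincides with the event that the sampled partition is the single column $(1^n)$, and then to estimate $\mathrm{Jack}_\alpha((1^n))$ directly from the explicit formula \eqref{37}. Since $\lambda_1'$ equals the number of parts $p$ of $\Lambda_n$, and the parts sum to $n$, the condition $\lambda_1' = n$ forces every part to equal $1$. Thus
\be{
   \IP_{n,\alpha}[\lambda_1' = n] = \mathrm{Jack}_\alpha\bclr{(1^n)},
}
and it suffices to show this probability tends to $1$.

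Next I would compute $\mathrm{Jack}_\alpha((1^n))$ explicitly. For the column partition, every box $x$ has arm $a(x) = 0$, and the box in row $i$ has leg $l(x) = n - i$. The denominator in \eqref{37} then factors as
\be{
   \prod_{i=1}^n (n-i+1)(n-i+\alpha) = n!\,\prod_{k=0}^{n-1}(\alpha+k),
}
so that, after cancellation of $n!$,
\be{
   \mathrm{Jack}_\alpha\bclr{(1^n)} = \prod_{k=1}^{n-1}\frac{\alpha}{\alpha+k}.
}

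To finish, I would apply $\log(1+x) \le x$ for $x \ge 0$ to obtain
\be{
   \mathrm{Jack}_\alpha\bclr{(1^n)} = \exp\bbbclr{-\sum_{k=1}^{n-1}\log(1+k/\alpha)} \ge \exp\bbbclr{-\frac{n(n-1)}{2\alpha}}.
}
Under the hypothesis $\alpha_n \ge n^{1+\epsilon}$ with $\epsilon > 1$, the exponent satisfies $n(n-1)/(2\alpha_n) \le n^{1-\epsilon}/2 \to 0$, so $\mathrm{Jack}_{\alpha_n}((1^n)) \to 1$, which gives the claim. There is no real obstacle here, as the proof reduces to a direct algebraic evaluation of \eqref{37} on the column partition followed by a one-line logarithmic estimate; the role of the assumption $\epsilon > 1$ (as opposed to $\epsilon = 1$) is precisely that it ensures $n^2/\alpha_n \to 0$, without which the lower bound only yields a positive constant and the conclusion fails.
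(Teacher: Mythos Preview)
Your proof is correct and follows essentially the same approach as the paper: both compute $\mathrm{Jack}_\alpha((1^n))$ directly from \eqref{37} using $a(x)=0$ and $l(x)\in\{0,\dots,n-1\}$ for the column partition, then apply the inequality $1+x\le e^x$ to bound the resulting product. The only cosmetic difference is that the paper bounds the reciprocal $1/\IP_{n,\alpha_n}[\lambda_1'=n]$ by $\exp(n^2/\alpha_n)$, whereas you bound the probability itself from below by $\exp(-n(n-1)/(2\alpha))$; your constant is slightly sharper but the argument is otherwise identical.
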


\begin{proof}
Note that for all boxes~$x$ in the Tableaux with~$\lambda_1'=n$ we have~$a(x)=0$ and~$l(x)$ takes all values between~$0$ and~$n-1$. Hence, 
from the Jack$_{\alpha}$ measure distribution as given in \eqref{37},
\beas
\frac{1}{\IP_{n,\alpha_n}[\lambda_1'=n]} & =\frac{\prod_{l=0}^{n-1}(l+1)(l+\alpha_n)}{\alpha_n^n n!}
=\frac{\prod_{l=0}^{n-1}(l+\alpha_n)}{\alpha_n^n}\\
&= \prod_{l=0}^{n-1} \left(1+\frac{l}{\alpha_n} \right) \le \prod_{l=0}^{n-1} \exp \left( \frac{l}{\alpha_n}\right) \le \exp\left( \frac{n^2}{\alpha_n} \right).
\enas
Substituting the lower bound on~$\alpha_n$ into this inequality yields
\be{
\IP_{n,\alpha_n}[\lambda_1'=n] \ge \exp(-n^{1-\epsilon}) \rightarrow 1 \qm{as~$n \rightarrow \infty$.}\qedhere
}
\end{proof}

\begin{remark}  
The Wasserstein bound in \eqref{40} suggests that a bound in the Kolmogorov metric should hold with rate function 
\bea \label{132}
r_{n,\alpha}=\left(\frac{1}{\sqrt{n}} + \frac{\sqrt{\alpha}}{n} \right)^{-1} \qmq{for all $n \ge 2$ and $\alpha>0$.}
\ena
This rate function is equivalent to the one we take in \eqref{122} for the `large $\alpha$' parameter set \eqref{121}, as there $n \le \alpha$ and $1/\sqrt{n}$ is dominated by $\sqrt{\alpha}/{n}$. 
Directly extending the arguments used here to cover the `small' alpha regime requires that \eqref{130} hold for some choice of $F_{{n,\alpha},1}$. In particular, \eqref{128} shows that $\IE_{n,\alpha} D^2 \le C/r_{n,\alpha}^2$, with $r_{n,\alpha}$ as in \eqref{132}. Hence, taking this route, one needs to specify $F_{n,\alpha,1}$ as an appropriate restriction on $\Lambda_{n-1}$ that satisfies $\IP_{n,\alpha} [F_{n,\alpha,1}^c] < C/r_{n,\alpha}^2$, and which gives rise to a bounding $\overline{D}$ of the right order. If in this case $\overline{B}$ may be taken to be $\overline{D}$ as in (Z5) above, then $\overline{D}$ needs to be of order $1/r_{n,\alpha}$. 
\end{remark}

\section{Proof of Theorems \ref{thm1} and \ref{thm2}}
\label{sec4}
The proofs of Theorems~\ref{thm1} and~\ref{thm2} ultimately rely on obtaining information about the solution to a certain recursive inequality. In its simplest form, and closely related to the argument in \cite{Bolthausen84}, this inequality becomes
\ben{\label{133}
   a_n\leq q a_{n-1}+c \qm {for $n\geq 2$ and $a_1=1$}
}
for some $0<q<1$ and $c>0$. In this simple case, it is not difficult to solve the corresponding equality explicitly to yield
\be{
  a_n = q^{n - 1} + c\frac{ 1 - q^{n - 1}}{1 - q} \qm{for $n \ge 1$.}
}
What is important here is not the exact form of the solution but rather that $a_n$ is uniformly bounded over $n \ge 1$. We show below that this property holds in greater generality when we replace $n$ on the left hand side of \eqref{133} by a generic parameter $\theta\in\Theta$, and average the right hand side over a randomly chosen parameter $Y\in\Theta$, rather than evaluate at $n-1$. Although, in the general case, there may exist additional solutions to the inequality that are unbounded, it turns out that these solutions must grow exponentially fast along some sequence, which is a behavior that can be excluded in our applications.

\begin{lemma}\label{lem10} Let~$(\Theta,\cT)$ and~$(\Omega,\cF)$ be measurable spaces. For each~$\theta\in\Theta$, let~$\IP_{\theta}[\cdot]$ be a probability measure on~$\Omega$. Let~$X:\Theta\times\Omega\to [0,\infty)$ and~$\Psi:\Theta\times\Omega\to \Theta$ be such that, for each~$\theta\in \Theta$, both~$X(\theta,\cdot)$ and~$\Psi(\theta,\cdot)$ are measurable functions. Assume there are constants~$0<q<1$ and~$c>0$, measurable functions~$a:\Theta\to [0,\infty)$ and $r:\Theta\to [0,\infty)$, and a measurable set~$\Goodset\subset\Theta$ such that
	\bg{
		(\mathrm{A}1)\enskip
		\text{$\IE_\theta X =1$ for all~$\theta\in\Goodset$,}
		\qquad
		(\mathrm{A}2)\enskip\text{$\IE_\theta X = 0$ for all~$\theta\in\Theta\setminus\Goodset$,}\\[1ex]
		(\mathrm{A}3)\enskip
		\text{$a(\theta) \leq q \IE_\theta \clc{Xa(\Psi)} + c < \infty$ for all~$\theta\in\Theta$}, \\
		(\mathrm{A}4)\enskip \text{$a(\theta) \le r(\theta)$ and $\IP_{\theta}-\esup_{\{X>0\}} r(\Psi)  \le \frac{r(\theta)}{2q}$ for all $\theta \in \Goodset.$}
	}
	Then
	\begin{align*}
		\sup_{\theta\in \Theta} a(\theta) \leq \frac{c}{1-q}.
	\end{align*}
\end{lemma}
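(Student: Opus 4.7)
The plan is to iterate (A3) $n$ times and let $n\to\infty$. First I handle the boundary: if $\theta\notin\Goodset$, then by (A2) together with $X\geq 0$ we have $X=0$ almost surely under $\IP_\theta$, so $\IE_\theta\{X a(\Psi)\}=0$ and (A3) collapses to $a(\theta)\leq c\leq c/(1-q)$. Thus only $\theta\in\Goodset$ requires work.

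For such $\theta$, view (A3) as a sub-Markov recursion by introducing the chain $\theta_0=\theta$, $X_k=X(\theta_{k-1},\omega_k)$, $\theta_k=\Psi(\theta_{k-1},\omega_k)$, where each $\omega_k$ is integrated against $\IP_{\theta_{k-1}}$. A straightforward induction on $n$, substituting (A3) applied at $\theta_k$ into the previous estimate and using $\IE_\theta X\leq 1$ at each stage to bundle the cumulative $c$-terms into a geometric series, gives
\[
a(\theta)\leq q^n T_n(\theta)+c\sum_{k=0}^{n-1}q^k\leq q^n T_n(\theta)+\frac{c}{1-q},
\]
where $T_n(\theta):=\IE_\theta[X_1 X_2\cdots X_n\, a(\theta_n)]$ is the iterated integral in $\omega_1,\ldots,\omega_n$. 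It therefore suffices to show $q^n T_n(\theta)\to 0$ as $n\to\infty$.

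This is where (A4) does the work. The key observation is that on $\{X_1 X_2\cdots X_n>0\}$, every $\theta_k$ with $k<n$ must lie in $\Goodset$, since (A2) combined with $X_{k+1}\geq 0$ forces $X_{k+1}=0$ whenever $\theta_k\notin\Goodset$. Hence the essential-supremum bound in (A4) can be chained along the trajectory to give $r(\theta_n)\leq r(\theta)/(2q)^n$ essentially on this event. Bounding $a(\theta_n)\leq r(\theta_n)+c$ (using $a\leq r$ on $\Goodset$ by (A4) and $a\leq c$ off $\Goodset$ by the first paragraph), and $\IE[X_1\cdots X_n]\leq 1$ by repeated conditioning together with (A1), I obtain
\[
T_n(\theta)\leq \frac{r(\theta)}{(2q)^n}+c,\qquad\text{so}\qquad q^n T_n(\theta)\leq \frac{r(\theta)}{2^n}+q^n c,
\]
which vanishes as $n\to\infty$ because $r(\theta)$ is finite and $q<1$. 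Substituting back gives $a(\theta)\leq c/(1-q)$, completing the proof.

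The one technical step deserving care is the rigorous chaining of (A4) across iterated integrals, since the essential supremum is defined separately under each $\IP_{\theta_{k-1}}$. The clean route is to peel off one layer of conditional expectation at a time: for fixed $\theta_{k-1}$, (A4) yields $\IE_{\theta_{k-1}}\{X_k r(\theta_k)\}\leq (r(\theta_{k-1})/(2q))\,\IE_{\theta_{k-1}}\{X_k\}$ pointwise in $\theta_{k-1}$ (with both sides zero when $\theta_{k-1}\notin\Goodset$, since then $X_k=0$ a.s.), and this local pointwise bound integrates cleanly against the preceding factors $X_1\cdots X_{k-1}$ to produce the recursion $r(\theta)/(2q)^n$ after $n$ steps.
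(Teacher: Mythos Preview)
Your proof is correct and takes a genuinely different route from the paper's. The paper argues by \emph{contradiction}: assuming $\sup_\theta a(\theta)>c/(1-q)$, it picks $\theta_0\in\Goodset$ with $a(\theta_0)=c/(1-q)+c\delta$, then uses (A3) at each stage to find $\theta_n\in\Goodset$ with $a(\theta_n)\geq c/(1-q)+c\delta/q^n$, while (A4) forces $r(\theta_n)\leq r(\theta_0)/(2q)^n$; since $a\leq r$ on $\Goodset$, this gives the contradiction $c\delta/q^n\leq C/(2q)^n$. Your approach is \emph{direct}: you unfold (A3) $n$ times, accumulate the geometric series in $c$, and kill the remainder $q^n T_n(\theta)$ by chaining (A4) along the trajectory to get $T_n(\theta)\leq r(\theta)/(2q)^n+c$. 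The paper's argument sidesteps iterated integrals entirely---it makes one pointwise choice per step and never has to worry about whether $\theta'\mapsto\IE_{\theta'}\{X\,a(\Psi)\}$ is measurable under the lemma's minimal hypotheses (only section measurability of $X,\Psi$, no kernel assumption on $\IP_\theta$). Your last paragraph handles this correctly by peeling one layer at a time against the \emph{explicit} majorant $r$; equivalently, one can iterate the pointwise bound $a(\theta)\leq r(\theta)/2^n+c(1+q+\cdots+q^n)$ without ever forming a nested integral, which makes the direct argument just as clean.
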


\begin{proof} Note that, for $\theta \in \Theta\setminus\Goodset$, the variable $X$ must be zero $\IP_\theta$-almost surely by~(A2), and so (A3) yields that 
\ben{\label{134}
	a(\theta)\leq c\qquad \text{for $\theta\in\Theta\setminus\Goodset$.}
}
We may therefore assume that~$\Goodset$ is non-empty, else the claim in trivial. We argue by contradiction; so assume  Conditions (A1)--(A4) are satisfied and that the opposite of the conclusion is true.
For every~$\theta\in\Goodset$, we can use (A1) and consider the probability measure~$\IP^X_\theta$ specified by its Radon-Nikodym derivative
\be{
	\frac{d \IP^X_\theta}{d\IP_\theta}=X, \qmq{so that}	\IE_\theta\clc{X a(\Psi)} = \IE^X_\theta \clc{a(\Psi)},
}
where~$\IE^X_\theta$ denotes expectation with respect to~$\IP^X_\theta$. 
We argue by contradiction, assuming that when 
	\begin{align} \label{135}
		\sup_{\theta \in \Theta} a(\theta) > \frac{c}{1-q}
	\end{align}
	and Conditions (A1)--(A4) hold, there exists a sequence~$\{\theta_n\}_{n \ge 0} \subset \Goodset$ and a constant $C$ such that, for all~$n\geq 0$,
\begin{align} \label{136}
	\frac{c}{1-q}+\frac{c\delta}{q^n} \leq  a(\theta_n) \leq r(\theta_n) \leq C/(2q)^n,
\end{align}
which is clearly impossible.

We proceed by induction. For the base case $n=0$, we note that since $a(\cdot)$ is bounded by $c$ on~$\Theta\setminus\Goodset$ by \eqref{134},  from \eqref{135} that there is~$\theta_0\in \Goodset$ such that~$a(\theta_0)=c/(1-q)+c\delta$, for some $\delta>0$; taking also $C=r(\theta_0)$, \eqref{136} is satisfied.

For the induction step, assume that the lower bound in~\eqref{136} is true for $n -1 \ge 0$. As~$\theta_{n-1} \in \Goodset$, Condition (A3) yields that $\IE_{\theta_{n-1}}^X a(\Psi)\geq (a(\theta_{n-1})-c)/q$, and so the integrand must be at least this lower bound on a set of positive $\IP^X_{\theta_{n-1}}$--measure; that is, 
\bm{
	A_{n-1} := \bclc{\omega\in\Omega\,:\, a\bclr{\Psi(\theta_{n-1},\omega)} \geq (a(\theta_{n-1})-c)/ q}\\
	\text{satisfies}
	\quad
	\IP^X_{\theta_{n-1}}\cls{A_{n-1}} > 0.
}
Moreover, by the definition of essential supremum,
\bm{
	B_{n-1} := \bclc{\omega\in\Omega\,:\,r\bclr{\Psi(\theta_{n-1},\omega)} \leq \IP_{\theta_{n-1}}\text{-}\esup r\bclr{\Psi}}
	\\  \text{satisfies} \quad\IP^X_{\theta_{n-1}}[B_{n-1}] = 1.
}
Hence~$\IP_{\theta_{n-1}}^X[A_{n-1}\cap B_{n-1}] = \IP_{\theta_{n-1}}^X[A_{n-1}]>0$, and we can find~$\theta_n\in \Theta$ satisfying
\ben{\label{137}
 a (\theta_n) \ge  \frac{a(\theta_{n-1})-c}{q}
	\quad\text{and}\quad
	r(\theta_n) \leq \IP^X_{\theta_{n-1}}\text{-}\esup r\bclr{\Psi(\theta_{n-1},\cdot)}.
}
Since $a(\cdot) \le c$ on~$\Theta\setminus\Goodset$ we conclude that~$\theta_n \in \Goodset$ in view of the first inequality of \eqref{137}, which also completes the induction for the lower bound in \eqref{135}.  Applying~\eqref{137} and (A4) yields 
\begin{align*}
	r(\theta_n) \leq \IP^X_{\theta_{n-1}}\text{-}\esup r\bclr{\Psi(\theta_{n-1},\cdot)} \le \frac{1}{2q}r(\theta_{n-1}),
\end{align*}
yielding the upper bound in \eqref{136}, and concluding the induction.
\end{proof}

\begin{proof}[Proof of Theorem~\ref{thm1}] Throughout the proof,~$C$ denotes a constant that does not depend on~$\theta$ and can change from formula to formula.  Note first that by Condition~\eqref{6} the bound \eqref{21} trivially holds for every~$\theta\in\Theta\setminus\Goodset$ by taking $C = \overline{r}$. Therefore we need only show that \eqref{21} holds for all~$\theta\in\Goodset$. Let
\ben{\label{138}
	\delta(\theta) = \left\{
	\begin{array}{cl}
		\sup_{z\in\IR}\abs{\IP_\theta[W\leq z] - \IP[Z\leq z]} & \theta \in \Goodset \\
		1 & \theta \in \Theta \setminus \Goodset.
	\end{array}
	\right.
}
Fix~$\eps>0$, whose exact value is to be chosen later, and for $z \in \mathbb{R}$ define
\be{
	h_{z,\eps} (x)= \begin{cases}
		1 & \text{if~$x\leq z$,}\\
		1 + (z-x)/\eps & \text{if~$z<x\leq z+\eps$,}\\
		0 & \text{if~$z+\eps<x$.}
	\end{cases}
}
Let~$f_{z,\eps}$ be the unique bounded solution to the Stein equation
\be{
	f_{z,\eps}'(x) - xf_{z,\eps}(x) = h_{z,\eps}(x) - \IE h_{z,\eps}(Z).
}
Using a standard smoothing inequality, see e.g. the proof of Theorem 5.1 in \cite{Chen10}, we have
\ben{\label{139}
	\delta(\theta) \leq \sup_{z\in\IR}\abs{\IE_\theta \clc{f_{z,\eps}'(W) - Wf_{z,\eps}(W)}} + \frac{\eps}{\sqrt{2\pi}}.
}
For ease of notation, we drop the indices~$z$ and~$\eps$ from~$f$.
\paragraph{Bound on~$\boldsymbol{\abs{\IE_\theta \clc{f'(W) - Wf(W)}}}$.}  Taking an arbitrary $\theta \in \Goodset$ and using the definition \eqref{2} of a Stein coupling in the second line below, we have
\bes{	
  &\abs{\IE_\theta \clc{f'(W) - Wf(W)}} \\
	&\qquad = \abs{\IE_\theta \clc{(1-GD)f'(W) - G\int_0^D (f'(W+t)-f'(W))dt}}\\
	&\qquad\leq \babs{\IE_\theta\bclc{f'(W)\IE_\theta \cls{ 1-GD \given W}}}
	+ \bbbabs{\IE_\theta\bbbclc{G\int_0^D\bclr{f'(W+t)-f'(W)}dt} }\\
	&\qquad =: R_1 + R_2.
}
From (4.6) and (4.7) of \cite{Chen04} we have, respectively, that~$\norm{f'}\leq 1$ and
\ben{\label{140}
	\abs{f'(x+t) - f'(x)} \leq \abs{t}\bbclr{1+\abs{x}+\frac{1}{\eps}\int_0^1 \I[z<x+ut\leq z+\eps]du},
}
implying,  by
the first condition in \eqref{10}, that
\ben{\label{141}
	R_1\leq \frac{C}{r_\theta} \qquad\text{for all~$\theta\in \Goodset$,}
}
and that
\bes{
	R_2 & \leq \IE_\theta\bbbclc{\abs{G}(1+\abs{W})\int_{0\wedge D}^{0\vee D}\abs{t}dt}\\
	&\qquad + \frac{1}{\eps}\IE_\theta\bbbclc{\abs{G}\int_{0\wedge D}^{0\vee D}\int_0^1 \abs{t}\I[z<W+ut\leq z+\eps]dudt} 
	 =: R_{2,1} + R_{2,2}.
}
Using the second condition in \eqref{10}, and that $|t| \le |D|$ in the integral, we have
\ben{\label{142}
	R_{2,1} \le \IE_\theta\bclc{(1+\abs{W})\abs{G}D^2} \leq \frac{C}{r_\theta} \qquad\text{for all~$\theta\in \Goodset$.}
}

Let~$F_\theta=F_{\theta,1} \cap F_{\theta,2}$. To handle the indicator in~$R_{2,2}$, write
\besn{ \label{143}
	& \I[z<W+ut\leq z+\eps]\\
	& \qquad \leq (1-I_{F_{\theta,1}}) +(1-I_{F_{\theta,2}}) +  I_{F_{\theta}}\I[z<W+ut\leq z+\eps].
}

Using \eqref{143}, and again that~$\abs{t}\leq\abs{D}$, we have
\ban{
	R_{2,2} & \leq \frac{1}{\eps}\IE_\theta\bbbclc{\abs{G}\int_{0\wedge D}^{0\vee D}\int_0^1 \abs{t}(1-I_{F_{\theta,1}})dudt +
		\abs{G}\int_{0\wedge D}^{0\vee D}\int_0^1 \abs{t}(1-I_{F_{\theta,2}})dudt} \notag \\
	&\quad+  \frac{1}{\eps}\IE_\theta\bbbclc{\abs{G}\int_{0\wedge D}^{0\vee D}\int_0^1 \abs{t}I_{F_{\theta}}\I[z<W+ut\leq z+\eps]dudt} \notag \\
	& \leq \frac{1}{\eps}\IE_\theta\bbbclc{\abs{G}D^2 (1-I_{F_{\theta,1}}) + \abs{G}D^2(1-I_{F_{\theta,2}})} \notag \\ &
	\quad+ \frac{1}{\eps}\IE_\theta\bbbclc{\abs{GD}\int_{0\wedge D}^{0\vee D}\int_0^1 I_{F_{\theta}\cap F_\circ}\I[z<W+ut\leq z+\eps]dudt} \notag \\ &
	\quad+  \frac{1}{\eps}\IE_\theta\bbbclc{\abs{G}D^2I_{F_{\theta}\cap F_\circ^c}} =: R_{2,2,1} + R_{2,2,2} + R_{2,2,3},\label{thm6}
}
where~$F_\circ = \{\Psi(\theta,\cdot)\in\Goodset\}$. Now, by \eqref{15} and the first condition of \eqref{12}
\ben{\label{144}
	R_{2,2,1} \leq \frac{C}{\eps r_\theta^2}.
}
Since~$F_{\theta}\cap F_\circ$ is contained in $F_\circ$ and~$\sigma_{\theta}>0$ for~$\theta\in\Goodset$, on this intersection we may define
\be{
	\~W = \frac{V-\mu_{\Psi}}{\sigma_{\Psi}},
}
and thus write
\be{
	W = \frac{\sigma_{\Psi}}{\sigma_\theta} \~W + \frac{Y-V}{\sigma_\theta} - \frac{\mu_\theta - \mu_\Psi}{\sigma_\theta} =: \rho \~ W + T_1 - T_2,
}
where~$\rho$,~$T_1$ and~$T_2$ are to be understood as random variables on~$\Goodset \times\Omega$. By the first condition in \eqref{17}, we have~$\abs{T_1}\leq \overline{B}$ on $F_\theta \cap F_\circ$. Hence,
\ban{ \label{145}
	& I_{F_\theta\cap F_\circ}I[z<W+ut\leq z+\eps] \\
	& \quad = I_{F_\theta\cap F_\circ} I \bbbcls{\frac{z-T_1+T_2-ut}{\rho} < \~W \leq \frac{z-T_1+T_2-ut + \eps}{\rho}} \notag \\
	& \quad \leq I_{F_\theta\cap F_\circ} I\bbbcls{\frac{z-{\overline B}+T_2-ut}{\rho} < \~W \leq \frac{z+{\overline B} +T_2-ut + \eps}{\rho} }\notag \\
	& \quad = I_{F_\theta\cap F_\circ} I \bbbcls{Q_{z,ut} - \frac{\overline B}{\rho}< \~W \leq Q_{z,ut} + \frac{\overline B+\eps}{\rho}}\notag
}
where
\be{
	Q_{z,y} = \frac{z+T_2-y}{\rho}
}
is~${\cal F}_\theta$ measurable by Condition~\eqref{13}. 

Note that~$F_\circ \in {\cal F}_\theta$ since~\Goodset, given in Condition~\eqref{6}, is in ${\cal T}$ and  $\Psi(\theta,\cdot)$ is ${\cal F}_\theta$-measurable by Condition~\eqref{13} for~$\theta \in \Goodset$.
Now using Condition~\eqref{11} to bound~$|D|$ by~$\overline{D}$ on~$F_{\theta,1}$, and applying the measurability of~$\overline{G}, \overline{D}$ and~$F_{\theta,2}$ with respect to~$\cF_\theta$ by Conditions~\eqref{11} and \eqref{13}, we obtain 
\besn{  \label{146}
	& R_{2,2,2} \\
	& \quad\le   \frac{1}{\eps}\IE_\theta\bbbclc{\overline{G}\,\overline{D}\int_{-\overline D}^{\overline D}\int_0^1 I_{F_{\theta}\cap F_\circ} \I\left[{\textstyle
		Q_{z,ut} - \frac{\overline B}{\rho}< \~W \leq Q_{z,ut} + \frac{\overline B+\eps}{\rho}}\right] dudt}  \\
	& \quad\le   \frac{1}{\eps}\IE_\theta\bbbclc{\overline{G}\,\overline{D}\int_{-\overline D}^{\overline D}\int_0^1 I_{F_{\theta,2}\cap F_\circ} \IP_\theta\bbcls{{\textstyle
		Q_{z,ut} - \frac{\overline B}{\rho}< \~W \leq Q_{z,ut} + \frac{\overline B+\eps}{\rho} \given {\cal F}_\theta
		}}dudt}.
}

Using \eqref{138} and \eqref{14} we obtain
\beas
\sup_{x \in \mathbb{R}}|\IP_\theta [\widetilde{W} \le x|{\cal F}_\theta] - \IP[Z \le x]|\le \delta(\Psi),
\enas
and as the normal density is bounded by~$1/\sqrt{2 \pi}$, using \eqref{19} we see that the integrand in \eqref{146} can be no more than 
\be{
	I_{F_{\theta,2}\cap F_\circ}\bbbclr{2\delta(\Psi) + \frac{2\overline B+ \eps}{\rho \sqrt{2\pi}}} 
	\leq C\,I_{F_{\theta,2}\cap F_\circ}\bbbclr{\delta(\Psi) + \overline B
		+ \eps}.
}
Therefore, using the second condition in \eqref{12} and the second inequality in \eqref{17} for the fourth inequality below, and then the first condition in \eqref{20} for the last, we obtain
\ban{
	R_{2,2,2} & \leq \frac{C}{\eps}\IE_\theta\bbbclc{\overline{G}\,\overline{D}^2 I_{F_{\theta,2}\cap F_\circ}\bbbclr{\delta(\Psi) + \overline B + \eps}} \notag\\
	& \leq \frac{C}{\eps}\IE_\theta\bclc{\overline{G}\,\overline{D}^2 I_{F_{\theta,2}\cap F_\circ}\delta(\Psi)} 
	+ \frac{C}{\eps} \IE_\theta\bclc{\overline{G}\,\overline{D}^2 \overline{B}I_{F_{\theta,2}}}
	+ C \IE_\theta\bclc{\overline{G}\,\overline{D}^2} \notag\\
	& \leq \frac{C\IE_\theta\bclc{\overline{G}\,\overline{D}^2}}{\eps}\IE_\theta\bbbclc{\frac{\overline{G}\,\overline{D}^2 I_{F_{\theta,2}\cap F_\circ}}{\IE_\theta \bclc{\overline{G}\,\overline{D}^2 I_{F_{\theta,2}}}}\delta(\Psi)} 
	+ \frac{C}{\eps} \IE_\theta\bclc{\overline{G}\,\overline{D}^2 
		\overline{B} I_{F_{\theta,2}}}
	+ C \IE_\theta\bclc{\overline{G}\,\overline{D}^2} \notag\\
	& \leq \frac{C}{\eps r_\theta}\IE_\theta\bbbclc{\frac{\overline{G}\,\overline{D}^2I_{F_{\theta,2}\cap F_\circ}}{\IE_\theta \bclc{\overline{G}\,\overline{D}^2 I_{F_{\theta,2}}}} \delta(\Psi)}
	+ \frac{C}{\eps r_\theta^2}
	+ \frac{C}{r_\theta} \notag\\
	& \leq \frac{C}{\eps r_\theta^2}\IE_\theta\bbbclc{\frac{\overline{G}\,\overline{D}^2I_{F_{\theta,2}\cap F_\circ}}{\IE_\theta \bclc{\overline{G}\,\overline{D}^2 I_{F_{\theta,2}}}} \delta(\Psi)r_\Psi}
	+ \frac{C}{\eps r_\theta^2}
	+ \frac{C}{r_\theta},  \label{147}}
where $R_{2,2,2}=0$ in the case~$\IE\bclc{\overline{G}\,\overline{D}^2 I_{F_{\theta,2}}}=0$, by the first line of the display above. 

In order to bound~$R_{2,2,3}$, using that~$\delta(\theta)=1$ for~$\theta \in \Theta \setminus \Goodset$ by \eqref{138} for the second equality, that~$F_\theta \subset F_{\theta,2}$ for the first inequality,  the first condition in \eqref{20} for the second inequality, and the second condition in \eqref{12} for the last, we have
\ban{
	R_{2,2,3} &= \frac{1}{\eps} \IE_\theta \bclc{\overline{G}\,\overline{D}^2 I_{F_{\theta}\cap F_\circ^c}} 
	 = \frac{1}{\eps} \IE_\theta\bclc{\overline{G}\,\overline{D}^2 I_{F_{\theta}\cap F_\circ^c}\delta(\Psi)}\\
	&\le \frac{1}{\eps} \IE_\theta\bclc{\overline{G}\,\overline{D}^2 I_{F_{\theta,2}\cap F_\circ^c}\delta(\Psi)}
	 \leq \frac{C}{\eps r_\theta} \IE_\theta\bclc{\overline{G}\,\overline{D}^2 I_{F_{\theta,2}\cap F_\circ^c}\delta(\Psi)r_{\Psi}} \notag \\
	& \leq \frac{C\IE_\theta\bclc{\overline{G}\,\overline{D}^2}}{\eps r_\theta} \IE_\theta\bbbclc{\frac{\overline{G}\,\overline{D}^2 I_{F_{\theta,2}\cap F_\circ^c}}{\IE_\theta\bclc{\overline{G}\,\overline{D}^2 I_{F_{\theta,2}}}}\delta(\Psi)r_{\Psi}} \notag \\
	& \leq \frac{C}{\eps r_\theta^2} \IE_\theta\bbbclc{\frac{\overline{G}\,\overline{D}^2 I_{F_{\theta,2}\cap F_\circ^c}}{\IE_\theta\bclc{\overline{G}\,\overline{D}^2I_{F_{\theta,2}} }}\delta(\Psi)r_{\Psi}}, \label{148}
}
where $R_{2,2,3}=0$ when~$\IE_\theta\bclc{\overline{G}\,\overline{D}^2 I_{F_{\theta,2}}}=0$, by the first line of the display.

Collecting the bounds \eqref{141}, \eqref{142}, \eqref{144}, \eqref{147} and \eqref{148} and using \eqref{139} we arrive at
\besn{\label{149}
	\delta(\theta) & \leq R_1 + R_{2,1} + R_{2,2,1} + R_{2,2,2} + R_{2,2,3} +\frac{\eps}{\sqrt{2\pi}} \\
	& \leq \frac{C}{\eps r_\theta^2}\IE_\theta\bbbclc{\frac{\overline{G}\,\overline{D}^2 I_{F_{\theta,2}}}{\IE_\theta\clc{\overline{G}\,\overline{D}^2 I_{F_{\theta,2}}}} \delta(\Psi)r_\Psi} + \frac{C}{\eps r_\theta^2} + \frac{C}{r_\theta} + C\eps.
}

Since Condition~\eqref{6} implies that~$\overline{r}$ is an upper bound on~$r_\theta$ for~$\theta \in \Theta \setminus \Goodset$, and a lower bound on~$r_\theta$ for~$\theta \in \Goodset$, we conclude that
\be{
    \sup_{\theta\in\Goodset}\IP_{\theta}\text{-}\esup_{\omega\in F_{\theta,2}\cap\{\Psi\in\Theta\setminus\Goodset\}}\frac{r_{\Psi(\theta,\omega)}}{r_\theta} < \infty. 
}
Hence, by the second condition in \eqref{20},
\ben{\label{150}
	q = \frac{1}{2}\left( 1\vee  \sup_{\theta\in\Goodset}\IP_{\theta}\text{-}\esup_{\omega\in F_{\theta,2}}\frac{r_{\Psi(\theta,\omega)}}{r_\theta} \right)^{-1}  \in (0,1).
}

Choosing~$\eps = C  / r_\theta q$ with~$C$ as in \eqref{149} and multiplying that inequality by~$r_\theta$ on both sides and then setting $a(\theta)=\delta(\theta)r(\theta)$ we obtain, for some possibly different constant~$c>0$, which does not depend on $\theta$ but may depend on~$q$,
\be{
	a(\theta) \leq q \IE_\theta\bbbclc{\frac{\overline{G}\,\overline{D}^2 I_{F_{\theta,2}}}{\IE_\theta\bclc{\overline{G}\,\overline{D}^2 I_{F_{\theta,2}}}} a(\Psi)} + c \qquad\text{for all~$\theta\in\Goodset$.}
}

We now verify the hypotheses of Lemma~\ref{lem10}, with the additional identification
\begin{align} \label{151}
X = \frac{\overline{G}\,\overline{D}^2 I_{F_{\theta,2}}}{\IE_\theta\clc{\overline{G}\,\overline{D}^2 I_{F_{\theta,2}}}}\I[\theta\in\Goodset].
\end{align}
Conditions (A1) and (A2) follow directly from the definition of $X$, while (A3) on $\Goodset$ is \eqref{151}, and is satisfied on $\Theta \setminus \Goodset$ as $\delta(\theta) \le 1$, and we may replace $c$ by $\max\{\overline{r},c\}$. Condition (A4) follows from \eqref{150}.
The conclusion of Lemma~\ref{lem10} now implies  that~$\delta(\theta)\leq C/r_\theta$ for all~$\theta\in\Theta$.
\end{proof}

\begin{proof}[Proof of Theorem~\ref{thm2}]
The proof for zero biasing is quite similar, but simpler, than the proof of Theorem \ref{thm1}; we only highlight the important differences. 

Recalling~$D=W^*-W$, applying the bound  \eqref{140}, and the zero bias characterization \eqref{4}, we obtain
\begin{multline}
	\left| \IE_\theta \left(f'(W)-W f(W) \right)\right|
	= \left| \IE_\theta \left(f'(W+D)-f'(W)\right) \right|\\
	\le \IE_\theta \left( |D| \left( 1+|W|+\frac{1}{\eps}\int_0^1 1_{[z,z+\epsilon]}(W+uD)du\right) \right).
	\label{152}
\end{multline}
Using \eqref{25}, noting in particular that $|D| \le |\overline{D}|$ on $F_{\theta,1}$, and the fact that $r_\theta > \overline{r}$ for $\theta \in \Goodset$ yields $1/r_\theta^2 \le C/r_\theta$, for the first two terms in \eqref{152},  we have
\beas 
\IE_\theta \clc{|D| +|DW|} \le \IE_\theta \clc{|D|(1-I_{F_{\theta,1}})} + \IE_\theta\clc{\overline{D}+|DW|}\le \frac{C}{r_\theta}.
\enas

Following the reasoning in \eqref{thm6} and labeling the corresponding terms that arise here in the same manner, for $R_{2,2}$, the only remaining term,   by the first condition in \eqref{26}, and \eqref{29}, we obtain the bound
\beas 
R_{2,2,1} \le \frac{1}{\eps} \IE_\theta \left( |D| (1-I_{F_{\theta,1}}) + |D|  (1-I_{F_{\theta,2}}) \right)\le \frac{C}{\eps r_\theta^2}.
\enas

For~$R_{2,2,2}$, as~$ut$ in \eqref{145} is replaced by~$uD$, separating the term that arises from $uD$ out of $Q_{z,y}$ as defined there, here we obtain
\begin{align*}
 I_{F_\theta\cap F_\circ}I[z<W+uD\leq z+\eps]  \le  I_{F_\theta\cap F_\circ} I \bbbcls{Q_z - \frac{{\overline B}+\overline{D}}{\rho}< \~W \leq Q_z + \frac{{\overline B}+\overline{D}+\eps}{\rho}},
\end{align*}
where~$Q_z = (z+T_2)/\rho$ is~${\cal F}_\theta$ measurable. Now arguing as in \eqref{147} we obtain
\beas
R_{2,2,2} & \leq \frac{C}{\eps}\IE_\theta\bbbclc{\overline{D} I_{F_{\theta,2}\cap F_\circ}\bbbclr{\delta(\Psi) + \overline B + \overline{D}+ \eps}}\\
& \leq \frac{C}{\eps r_\theta^2}\IE_\theta\bbbclc{\frac{\overline{D}I_{F_{\theta,2}\cap F_\circ}}{\IE\bclc{\overline{D} I_{F_{\theta,2}}}} \delta(\Psi)r_\Psi}
+ \frac{C}{\eps r_\theta^2}
+ \frac{C}{r_\theta}
\enas
using the second condition of \eqref{26} and the first one of \eqref{20} for the first term, and the second conditions of \eqref{31} and \eqref{26}, respectively, to obtain the last two terms in the bound.

As in \eqref{148}, using the first condition of \eqref{20} and the second condition of \eqref{26}, we obtain
\beas
R_{2,2,3} = \frac{1}{\eps}\IE_\theta \left\{
	\overline{D} I_{F_{\theta,2}\cap F_\circ^c}
	\right\}\le \frac{C}{\eps r_\theta^2} \IE_\theta \bbbclc{\frac{\overline{D} I_{F_{\theta,2}\cap F_\circ^c}}{\IE_\theta \bclc{\overline{D} I_{F_{\theta,2}} }}\delta(\Psi)r_{\Psi}}.
\enas
Combining terms as in \eqref{149} yields
\beas \delta(\theta) \leq \frac{C}{\eps r_\theta^2}\IE_\theta\bbbclc{\frac{\overline{D} I_{F_{\theta,2}}}{\IE_\theta\clc{\overline{D} I_{F_{\theta,2}}}} \delta(\Psi)r_\Psi} + \frac{C}{\eps r_\theta^2} + \frac{C}{r_\theta} + C\eps.
\enas
The proof can now be concluded as for Theorem \ref{thm1}.
\end{proof}

\section{Appendix} \label{sec5}
We illustrate two instances where the conditions in the General Framework of the Introduction are implicitly invoked. First we show that random version of the random variable~$Y$ at the (random) `smaller' parameter value is a random variable. The maps
\beas
(\theta,\omega) \rightarrow (\Psi(\theta,\omega),\omega) \qmq{and} (\theta,\omega) 
\rightarrow Y\bclr{\Psi(\theta,\omega),\omega}
\enas
are measurable, the first as each component is measurable, and the second being a composition of measurable maps.

Next, we show that if~$f(\theta,\omega)$ is measurable and $\IP_\theta$-integrable for all $\theta \in \Theta$, then 
\beas
\theta \rightarrow \int_{\Omega} f(\theta,\omega)dP_\theta(\omega)
\enas
is a measurable function of~$\theta$. Indeed, the collection ${\mathcal M}$ of subsets $E$ of $\Theta \times \Omega$ for which the integral of~$f(\omega,\theta)=I_E(\omega,\theta)$ is measurable with respect to $\IP_\theta$ is a monotone class. The class ${\mathcal M}$ contains the rectangles which are products of measurable sets $A$ and $B$, as their indicator
\beas
f(\theta,\omega) =\I[\theta \in A]\I[\omega \in B] 
\qmq{has integral} \int_{\Omega} f(\theta,\omega)d\IP_\theta(\omega)=\I[\theta \in A]\IP_\theta[B],
\enas
which is a product of measurable functions of~$\theta$. Hence ${\mathcal M}$ contains the algebra of all finite disjoint unions of such rectangles, and hence, by the Monotone Class theorem, the sigma-algebra these rectangle generate, that is, the product sigma-algebra. Given a non-negative integrable function $f(\theta,\omega)$, standard arguments using an approximating sequence of simple functions from below in concert with the Monotone Convergence Theorem yields the measurability of the integral of $f(\theta,\omega)$, and then for real valued functions by breaking up of any given integrable function into positive and negative parts.

\section*{Acknowledgements}

We are grateful to the referees for their detailed comments and references. This work was partially supported by the Singapore Ministry of Education AcRF Tier~1 Grants R-146-000-230-114 and R-155-000-167-112 through the National University of Singapore. The second author thanks the Department of Statistics and Applied Probability, National University of Singapore, for their kind hospitality.

\setlength{\bibsep}{0.5ex}
\def\bibfont{\small}

\end{document}